\documentclass[11pt,a4paper,reqno,english]{amsart}
\usepackage{amssymb,amsmath,amsthm}
\numberwithin{equation}{section}
\usepackage[margin=1in]{geometry}
\usepackage[colorlinks=true, linkcolor=blue, citecolor=green, urlcolor=black]{hyperref}

\usepackage{color}
\definecolor{red}{rgb}{1,0,0}
\definecolor{blue}{rgb}{.2,.6,.75}
\definecolor{green}{rgb}{.4,.7,.4}
\newcommand{\rev}[1]{#1}

\newcommand{\C}{\mathbb{C}}
\newcommand{\N}{\mathbb{N}}
\newcommand{\Z}{\mathbb{Z}}
\newcommand{\U}{\mathbb{U}}
\newcommand{\e}{\mathrm e}
\newcommand{\ov}{\overline}

\newcommand{\rad}{\operatorname{rad}}

\newtheorem{thm}{Theorem}[section]
\newtheorem{prop}{Proposition}[section]
\newtheorem{lem}{Lemma}[section]
\newtheorem{cor}{Corollary}[section]
\newtheorem{conjecture}{Conjecture}[section]

\theoremstyle{remark}
\newtheorem{rem}{Remark}[section]

\theoremstyle{definition}

\renewcommand{\leq}{\leqslant}
\renewcommand{\geq}{\geqslant}
\renewcommand{\d}{\mathrm{d}}

\begin{document}

\title[On exponential sums with multiplicative coefficients]{On exponential sums with multiplicative coefficients}

\author{Haozhe Gou}
\date{}

\address{School of Mathematics, Shandong University, Jinan 250100, China}
\address{D\'epartement de math\'ematiques et de statistique, Universit\'e de Montr\'eal, C.P.~6128, succ. Centre-ville, Montr\'eal, QC H3C~3J7, Canada}
\email{haozhegou@gmail.com}

\subjclass[2020]{11L07, 11N56}
\keywords{Multiplicative functions, exponential sums, smooth numbers, Gauss sums, pretentious large sieve}

\begin{abstract}
Let $f:\N\to\{z\in\C:|z|\leq1\}$ be multiplicative, and suppose that
$\alpha$ has a rational approximation $a/q$ with
$|\alpha-a/q|\leq 1/(qx)$ and $q\leq x/((\log x)^2(\log\log x)^3)$. We prove that
\[
 \left|\sum_{n\leq x}f(n)\e(n\alpha)\right|
 \leq \left(\frac4{\sqrt6}+o(1)\right)\frac{x}{\sqrt q} +O\left(\frac{x}{\log x}\right).
\]
whereas complete multiplicativity improves $4/\sqrt6$ to $1$.
Both constants are optimal. In particular, this confirms a \rev{folklore} conjecture on exponential sums with multiplicative coefficients motivated
by the work of Montgomery and Vaughan (1977).
\rev{We also obtain logarithmically weighted estimates and give counterexamples to related conjectures.}
\end{abstract}

\maketitle

\section{Introduction}

Let $\U:=\{z\in\C:|z|\leq1\}$ and
$\e(t):=\exp(2\pi i t)$. For a multiplicative function
$f:\N\to\U$, write
$S_f(x,\alpha):=\sum_{n\leq x}f(n)\e(n\alpha)$.
We are interested in how large this sum can be when
$\alpha$ is close to a rational number.

There are two natural reasons why such sums can be large.
If $f$ is a primitive Dirichlet character modulo $q$ and
$\alpha=a/q$, the Gauss sum gives a contribution of size
$x/\sqrt q$. On the other hand, by choosing the values of
$f(p)$ for primes $p$ comparable with $x$, one can obtain
a contribution of size $x/\log x$. These two examples suggest
that the correct upper bound should contain both terms.

This leads to the following folklore conjecture.

\begin{conjecture}\label{conj:main}
Fix $\varepsilon>0$. Let $f:\N\to\U$ be multiplicative,
and suppose that $|\alpha-a/q|\leq1/(qx)$, where
$(a,q)=1$ and $1\leq q\leq x/(\log x)^{2+\varepsilon}$.
Then
\begin{equation}\label{eq:conjecture}
 |S_f(x,\alpha)|
 \ll_\varepsilon
 \frac{x}{\log x}+\frac{x}{\sqrt q}.
\end{equation}
\end{conjecture}

The study of exponential sums with multiplicative
coefficients goes back to Daboussi and Delange
\cite{DaboussiDelange1974,DaboussiDelange1982},
who established cancellation for irrational additive
phases. Montgomery and Vaughan
\cite{MontgomeryVaughan1977} later obtained the
quantitative estimate
\begin{equation}\label{eq:MV-intro}
 |S_f(x,a/q)|
 \ll \frac{x}{\log x}+\frac{x}{\sqrt{\phi(q)}}
\end{equation}
for $q\leq x/((\log x)^2(\log\log x)^3)$.
The difference between $\phi(q)$ and $q$ matters here,
since $q/\phi(q)$ can be arbitrarily large.

Further progress was made by de la Bret\`eche and
Granville \cite{BretecheGranville2022}, who obtained
the expected $x/\sqrt q$ bound for completely
multiplicative functions when
$q\leq(\log x)^{2-\delta}$, for any fixed $\delta>0$.
Granville and Lamzouri \cite{GranvilleLamzouri2026}
extended the discussion to general multiplicative
functions and showed that the main difficulty lies
in the transition range $q=(\log x)^{2+o(1)}$.
Outside this range, the conjecture follows from
the existing estimates.

Our first result proves the conjecture throughout the
range covered by Montgomery and Vaughan. We also
determine the best possible leading constants.
Write
\begin{equation}\label{eq:QMV}
 Q_{\rm MV}(x):=
 \frac{x}{(\log x)^2(\log\log x)^3}.
\end{equation}

\begin{thm}[Sharp exponential-sum bounds]\label{thm:main}
Let $f:\N\to\U$ be multiplicative, and suppose that
$|\alpha-a/q|\leq1/(qx)$, where $(a,q)=1$ and
$1\leq q\leq Q_{\rm MV}(x)$. Then
\begin{equation}\label{eq:main-expanded}
 |S_f(x,\alpha)|
 \leq
 \left(\frac4{\sqrt6}+o(1)\right)\frac{x}{\sqrt q}
 +O\left(\frac{x}{\log x}\right).
\end{equation}
If $f$ is completely multiplicative, then
\begin{equation}\label{eq:complete-main}
 |S_f(x,\alpha)|
 \leq
 (1+o(1))\frac{x}{\sqrt q}
 +O\left(\frac{x}{\log x}\right).
\end{equation}
Both leading constants are best possible, and the
estimates are uniform in $f,a,q$, and $\alpha$.
In particular, Conjecture~\ref{conj:main} holds
for every fixed $\varepsilon>0$.
\end{thm}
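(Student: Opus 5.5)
Here is the plan. Write $n=mp$ with $p$ a large prime and apply a Montgomery--Vaughan type decomposition to reduce to sums of $f$ twisted by characters. Then get the sharp constants from a local (Euler-factor) analysis at the primes dividing $q$.

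\emph{Reduction.} First reduce to $\alpha=a/q$ exactly. Since $|\alpha-a/q|\leq 1/(qx)$, partial summation over the short-phase factor $\e(n(\alpha-a/q))$ costs only a bounded factor. That factor is not acceptable for a sharp constant, so I would instead keep $\beta=\alpha-a/q$ inside the integrals and note that $|\int_0^1 \e(\beta x t)\,\d t|\leq 1$.

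\emph{Splitting off the part of $n$ sharing factors with $q$.} Write $n=dm$ with $d\mid q^\infty$ and $(m,q)=1$. Then $\e(dma/q)$ depends on $m$ modulo $q/(d,q)$, and we expand in Dirichlet characters modulo $q_d:=q/(q,d)$. Each character $\chi$ is induced by a primitive $\chi^*$ of conductor $r\mid q_d$, with Gauss sum $|\tau(\chi^*)|=\sqrt r$. Following Montgomery--Vaughan (the large sieve in the form with $\sum_{p}|\sum_m\dots|^2$, together with the Tur\'an--Kubilius / Elliott decomposition $f(n)\log n=\sum_{p^k m=n} f(p^k m)\log p$), all characters except one are bounded by $O(x/\log x)$. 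The exceptional one is a $\chi$ for which $f\bar\chi$ pretends to be $1$ (the pretentious large sieve). This pretentious large sieve, controlling the number of characters $\chi$ mod $q$ with $\sum_{m\le x}f(m)\bar\chi(m)$ large in terms of the MV range $q\le Q_{\rm MV}(x)$, is exactly where the $(\log\log x)^3$ loss enters. For the single main character, $\sum_{m\leq y,(m,q)=1} f(m)\bar\chi(m)\leq(1+o(1))y\prod_{p\mid q}(1-1/p)$ at best. The combined contribution is then
\[
\frac{x}{\phi(q_d)}\cdot\frac{\phi(q)}{q}\cdot\sqrt{r}\cdot\frac{1}{d},
\]
with a M\"obius-type correction when $r<q_d$.

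\emph{Local optimization.} We must bound
\[
\sum_{d\mid q^\infty} \frac{|f(d)|}{d}\cdot\frac{\sqrt{r_d}\,|\mu\text{-factor}|}{\phi(q_d)}\cdot\frac{\phi(q)}{q}
\]
by $C/\sqrt q$. This factorizes as an Euler product over $p^e\| q$. For each $p$ we maximize over the choice of $f(p^k)\in\U$ and of the local conductor. The maximum local factor should be $\leq 1$ if $f$ is completely multiplicative. In general I expect the worst case to come from $p=2$ or $p=3$ with $e=1$: taking $f(p)=-1$, $f(p^2)$ free, and so on, yields an extra factor. The value $4/\sqrt6=2\sqrt{2/3}$ suggests the extremal configuration is $q=6$-type local factors, i.e.\ $p=2$ giving $\sqrt2$ and $p=3$ giving $2/\sqrt3$, with every other local factor at most $1$. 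The key steps here are verifying that each local factor is at most $1$ for $p\geq5$ and at most these values for $p=2,3$, including the higher powers $p^e$. This needs a small finite computation per $p$ plus a monotonicity argument in $p$.

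\emph{Optimality and the main obstacle.} Optimality is shown by explicit examples. For the complete case, take $f=\chi$ a primitive character modulo a prime $q$, giving $x/\sqrt q\cdot|\tau|/\sqrt q$. For the general case, take $q=6q'$ and $f=\chi\cdot g$ with $g$ supported on the local choices at $2$ and $3$ that realize the extremal factors. Finally, the $x/\log x$ term absorbs everything non-main. The conjecture follows because $(\log x)^{2+\varepsilon}$ eventually exceeds $(\log x)^2(\log\log x)^3$, and for larger $q$ in that range the term $x/\sqrt q$ is at most $x/\log x$.

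The main obstacle is obtaining the main term with constant exactly $1+o(1)$ rather than $\ll$. That requires the pretentious large sieve to isolate a single character losing only $o(x/\sqrt q)+O(x/\log x)$, uniformly up to $Q_{\rm MV}$, together with a Hal\'asz-type bound with sharp constant $|\sum f\bar\chi|\leq(1+o(1))\cdot$(local density).
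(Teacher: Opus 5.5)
\paragraph{The constant $1$ in the completely multiplicative case.} Your termwise treatment of the $d$-sum cannot produce this constant. After isolating $\chi_0$ you bound each $d$ separately, so the kernel you must control contains $|f(d)|$. At $p\Vert q$ with locally principal component, its local factor is $1+(p-1)\sum_{k\geq1}|f(p^k)|p^{-k}$. This equals $2$, i.e.\ $2/\sqrt p$ relative to $\sqrt p$, as soon as $|f(p)|=1$. Complete multiplicativity only constrains phases ($f(p^k)=f(p)^k$), and an absolute-value kernel cannot see phases. So your claim that the local factor is at most $1$ in that case is false, and your argument gives $4/\sqrt6$ for both classes. For instance, take $q=2$, $f(2)=-1$ and $f\equiv1$ on odd numbers. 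The termwise bound is $x/2+x/2=\sqrt2\cdot x/\sqrt2$, whereas $S_f(x,1/2)=-x/2-x/6+O(\log x)$.

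For general $f$ your scheme is fine. Once one character survives, the trivial bound $|M_{\chi_0}(x/d)|\leq\frac{\phi(q)}{q}\frac xd+O(2^{\omega(q)})$ and the $\ell^1$ kernel of Lemma~\ref{lem:general-primary-kernel} give $4/\sqrt6$.

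What is missing is coherence of the surviving mean across the shortenings $x/d$. The Lipschitz theorem of Granville--Harper--Soundararajan, transferred from $f^\flat\chi_0$ through the powerful convolution, gives $|t|\leq L$ with $M_{\chi_0}(x/u)=u^{-1-it}M_{\chi_0}(x)+O(x/(uL^{\eta_1}))$ for $u\leq L^A$ (Lemma~\ref{lem:general-stability}). This collapses the main term to $\phi(q)^{-1}\mathcal H_x(\beta,t)M_{\chi_0}(x)\sum_{d\in\mathcal D_q}f(d)G_q(ad,\ov{\chi_0})d^{-1-it}$. Only then does the sign change between $c_p(a)=-1$ at $d=1$ and $c_p(ap^v)=p-1$ at $d=p^v$ yield the contracted factor $-z_p^{e-1}(1-z_p)/(1-z_p/p)$, of modulus at most $2p/(p+1)\leq\sqrt p$.

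The same lemma is what lets you keep $\beta$ without loss. The amplitude carries a $v^{it}$ twist, so $|\int_0^1\e(\beta xt)\,\d t|\leq1$ is not the relevant estimate. Instead, $|\mathcal H_x(\beta,t)M_{\chi_0}(x)|$ is bounded by $\frac{\phi(q)}{q}x$ plus small errors, by identifying it with the twisted sum at $u=1$.

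\paragraph{The one-character reduction.} This step is misstated, and its hard part is missing. It is not true that all but one character contribute $O(x/\log x)$. Let $q$ be an odd prime, $\psi$ the Legendre symbol, and $f$ completely multiplicative with $f(p)=\mathbf1_{\psi(p)=1}$. Then the means of $f$ twisted by the principal character and by $\psi$ both have size $\asymp_q x(\log x)^{-1/2}$. Whichever one is not exceptional contributes $\gg_q x(\log x)^{-1/2}$, far more than $x/\log x$.

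The correct target is $o(x/\sqrt q)+O(x/\log x)$. For $q$ near $(\log x)^2$ this needs an $\ell^1$ bound over characters. Cauchy--Schwarz with the $\ell^2$ pretentious large sieve and $|G_q(ad,\chi)|\leq\sqrt{q(d,q)}$ only gives $\sqrt q\,L^{-(1-1/\sqrt2)+o(1)}\cdot x/\sqrt q$.

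The prime-divisor weight you allude to is the right device (Lemma~\ref{lem:general-weighted-tail}). Removing it afterwards, however, produces boundary terms you do not treat:
\begin{itemize}
\item the primes $p\leq Z$, which need an a priori order-of-magnitude bound at the reduced denominators $q/(q,p^k)$;
\item the primes $p>X/Z$, handled by Lemma~\ref{lem:weighted-large-prime-boundary}.
\end{itemize}

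\paragraph{The range of $q$.} The $(\log\log x)^3$ has nothing to do with the pretentious large sieve, which gives no saving once $\log q\asymp\log x$. The single-character analysis is only needed for $q\leq L^3$. For $L^{5/2}\leq q\leq Q_{\rm MV}(x)$, Montgomery--Vaughan together with $q/\phi(q)\ll\log\log q$ already gives $O(x/\log x)$ (Proposition~\ref{prop:large-q}), so there is no leading constant to optimize there.
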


\begin{rem}
\rev{Independently,} Robles \cite{Robles2026} proved the folklore
conjecture for all $1$-bounded multiplicative functions,
\rev{in the wider range} $q\leq x/(\log x)^2$ \rev{under the weaker approximation condition}
$|\alpha-a/q|\leq q^{-2}$. His result gives the conjectured
upper bound, but does not determine the sharp leading
constants $4/\sqrt6$ and $1$ \rev{as in our}
Theorem~\ref{thm:main}.
\end{rem}

\rev{The difference between these constants comes from the local factors.}
For general multiplicative functions, the constant
$4/\sqrt6$ comes from the local contributions at
the primes $2$ and $3$. Complete multiplicativity
forces additional cancellation in these local factors,
which reduces the constant to $1$.

When $q=o((\log x)^2)$, the term $x/\log x$ is
smaller than $x/\sqrt q$ and can be absorbed into
the error in the leading coefficient. We therefore
obtain the following consequence.

\begin{cor}\label{cor:sharp-range}
\rev{Suppose that} $q=o((\log x)^2)$ \rev{and that the hypotheses of
Theorem~\ref{thm:main} hold. Then}
\begin{equation}\label{eq:subcritical-general}
 |S_f(x,\alpha)|
 \leq
 \left(\frac4{\sqrt6}+o(1)\right)\frac{x}{\sqrt q}.
\end{equation}
For completely multiplicative $f$, the constant
$4/\sqrt6$ can be replaced by $1$.
Both constants are optimal, and neither estimate
can be extended uniformly to $q\asymp(\log x)^2$
with the same leading constant.
\end{cor}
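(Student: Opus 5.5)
The corollary has three parts: the upper bound, optimality, and failure at $q\asymp(\log x)^2$. I would derive all three from Theorem~\ref{thm:main}, using the extremal examples behind its optimality claim.

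\emph{Upper bound.} The hypothesis $q=o((\log x)^2)$ gives $\sqrt q=o(\log x)$, so
\[
\frac{x}{\log x}=o\!\left(\frac{x}{\sqrt q}\right).
\]
Also $q\leq Q_{\rm MV}(x)$ holds for large $x$, so \eqref{eq:main-expanded} applies. Its error term $O(x/\log x)$ is then $o(x/\sqrt q)$ uniformly, and it can be absorbed into the $o(1)$ of the leading coefficient. This gives \eqref{eq:subcritical-general}. The completely multiplicative case follows in the same way from \eqref{eq:complete-main}.

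\emph{Optimality.} Here I would take the constructions that show the constants in Theorem~\ref{thm:main} are best possible. For complete multiplicativity, let $f=\chi$ be a primitive character mod $q$ with $q$ prime and $\alpha=a/q$. Splitting into residue classes gives
\[
S_\chi(x,a/q)=\frac{x}{q}\,\bar\chi(a)\,\tau(\chi)+O(q),
\]
which has modulus $x/\sqrt q+O(q)$. For $4/\sqrt6$, I would use a twist of a character by modified local factors at $2$ and $3$, with $q$ carrying suitable powers of $2$ and $3$. The local Euler-factor computation should then produce exactly the ratio $4/\sqrt6$. These examples work for any fixed or slowly growing $q$ within the range, so no smaller constant can hold.

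\emph{Failure at $q\asymp(\log x)^2$.} I would superimpose the two mechanisms. Start from the extremal $f$ above, and redefine $f(p)$ for primes $p\in(x/2,x]$ coprime to $q$ so that $f(p)\e(p\alpha)$ is aligned with the phase of the Gauss-sum main term. Only $O(x/\log x)$ terms change, and no products are affected, since $2p>x$. The sum then becomes
\[
\frac{c\,x}{\sqrt q}+\bigl(\tfrac12+o(1)\bigr)\frac{x}{\log x}\cdot\kappa,
\]
where $\kappa>0$ accounts for the removal of the old values on those primes.

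\emph{Main obstacle.} The delicate step is showing that the net gain is positive. Replacing the values at the large primes also deletes their previous contribution, which is part of the Gauss-sum term. By equidistribution of primes in progressions (Siegel--Walfisz, valid since $q$ is a power of $\log x$), that deleted part is about $\frac{1}{\phi(q)}\cdot\frac{x}{2\log x}\cdot\sqrt q$. This is $o(x/\log x)$ because $\sqrt q/\phi(q)\to0$. The aligned contribution is therefore $\gg x/\log x\asymp x/\sqrt q$, which exceeds the leading term by a constant factor. So the leading constant strictly exceeds $4/\sqrt6$, respectively $1$, when $q\asymp(\log x)^2$.
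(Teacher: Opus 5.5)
Your proposal is correct and follows the paper's route. You absorb the $O(x/\log x)$ term into $o(x/\sqrt q)$, use primitive characters and the periodic $2$--$3$ example for optimality, and for $q\asymp(\log x)^2$ realign the values at primes in $(x/2,x]$, using Siegel--Walfisz to show their original contribution is negligible; this is exactly Proposition~\ref{prop:critical-counterexample} and Remark~\ref{rem:critical-counterexample}. The only step you leave vague is the $4/\sqrt6$ example, which the paper makes explicit as $f=g\chi$ with $q=6\ell$, $g(2^k)=g(3^k)=-1$ and $g(p^k)=1$ for $p>3$, whose local sum modulo $6$ equals $4$.
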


We also consider the logarithmically weighted sum
$\sum_{n\leq x}f(n)\e(n\alpha)/n$.
Theorem~\ref{thm:main}, together with partial
summation, gives the following bounds.
\rev{These logarithmic estimates are also established directly in}
\cite{GouLogarithmic2026},
using a two-prime Ramar\'e identity.

\begin{cor}[Logarithmic exponential sums]
\label{cor:logarithmic}
Let $f:\N\to\U$ be multiplicative, and suppose that
$|\alpha-a/q|\leq1/(qx)$, where $(a,q)=1$ and
$1\leq q\leq x$. Then
\begin{equation}\label{eq:logarithmic-corollary}
 \left|
 \sum_{n\leq x}\frac{f(n)}n\e(n\alpha)
 \right|
 \ll \log q+\frac{\log x}{\sqrt q}.
\end{equation}
In particular, conjecture~(1.5) of
\cite{GranvilleLamzouri2026} holds.

Moreover, if
$q=o((\log x/\log\log x)^2)$, then
\begin{equation}\label{eq:general-sharp-log}
 \left|
 \sum_{n\leq x}\frac{f(n)}n\e(n\alpha)
 \right|
 \leq
 \left(\frac4{\sqrt6}+o(1)\right)
 \frac{\log x}{\sqrt q}.
\end{equation}
If $f$ is completely multiplicative, the stronger
estimate
\begin{equation}\label{eq:complete-sharp-log}
 \left|
 \sum_{n\leq x}\frac{f(n)}n\e(n\alpha)
 \right|
 \leq
 (1+o(1))\frac{\log x}{\sqrt q}
\end{equation}
holds in the same range.

The leading constants $4/\sqrt6$ and $1$ are
best possible for multiplicative and completely
multiplicative functions, respectively.
All estimates are uniform in $f,a,q$, and $\alpha$.
\end{cor}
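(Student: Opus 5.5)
The plan is to derive Corollary~\ref{cor:logarithmic} from Theorem~\ref{thm:main} by partial summation, with trivial bounds on short initial segments. Write $T(t):=S_f(t,\alpha)$ and
\[
\sum_{n\leq x}\frac{f(n)}n\e(n\alpha)=\frac{T(x)}{x}+\int_1^x\frac{T(t)}{t^2}\,\d t .
\]
The difficulty is that the Diophantine hypothesis $|\alpha-a/q|\leq 1/(qx)$ is stated at scale $x$, and it implies $|\alpha-a/q|\leq 1/(qt)$ for every $t\leq x$. So Theorem~\ref{thm:main} applies at every scale $t$ with $q\leq Q_{\rm MV}(t)$, that is, roughly for $t\geq t_0:=q(\log q)^2(\log\log q)^{3}$ up to constants.

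On the range $t\leq t_0$ I use the trivial bound $|T(t)|\leq t$. This contributes $\int_1^{t_0}\d t/t\ll\log t_0\ll\log q$.

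On the range $t_0\leq t\leq x$, Theorem~\ref{thm:main} gives $|T(t)|\leq C t/\sqrt q+O(t/\log t)$. The first term contributes $(C+o(1))\log(x/t_0)/\sqrt q\leq (C+o(1))\log x/\sqrt q$, and the boundary term $T(x)/x$ is $O(1/\sqrt q+1/\log x)$. The main obstacle is the error term: $\int_{t_0}^x \d t/(t\log t)=\log\log x-\log\log t_0$, which is not $\ll\log q$ when $q$ is small. This error needs a better input in the range $t\geq t_0$. I would handle it in one of two ways.

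\begin{itemize}
\item[(a)] If the proof of Theorem~\ref{thm:main} (Montgomery--Vaughan style) actually yields $|T(t)|\ll t/\sqrt q+t\sqrt{q}\,(\log\log t)^{3/2}/\log t$ or a similar $q$-dependent saving, the integral converges appropriately. I would try to extract such a refinement, i.e.\ $|T(t)|\ll t/\sqrt q + t/\log t\cdot(\text{something decaying as } t/q\to\infty)$.
\item[(b)] Failing that, I accept $\log\log x$ losses only when they are dominated. The bound $\log\log x\ll \log x/\sqrt q$ holds whenever $q\leq(\log x/\log\log x)^2$. For larger $q$, I bound the initial segment up to $t_1:=\exp(\sqrt q)$ trivially, giving $\log t_1=\sqrt q$, which is too much. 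So (b) really requires a log-weighted input, and I would instead prove the sum bound directly by the Montgomery--Vaughan argument with weights $1/n$. The prime-supported part is bounded by $\sum_{p}1/p$ over $p$ in ranges where phases do not cancel, namely $p\leq q$, giving $\log q$ via the large-sieve-with-$1/n$ weights. This is presumably why the paper cites \cite{GouLogarithmic2026}.
\end{itemize}

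For the sharp statements \eqref{eq:general-sharp-log} and \eqref{eq:complete-sharp-log}, the range $q=o((\log x/\log\log x)^2)$ is exactly what makes $\log\log x=o(\log x/\sqrt q)$. In that range the crude partial summation above suffices: the error is $O(\log q+\log\log x)=o(\log x/\sqrt q)$ and the main term is $(C+o(1))\log x/\sqrt q$, with $C=4/\sqrt6$ or $1$ respectively. Note that $\log t_0=O(\log q)=o(\log x)$, so the trivial initial segment is negligible.

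For optimality, I take the extremal examples showing sharpness in Theorem~\ref{thm:main}. These are a character-twisted construction with modified local factors at $2$ and $3$ for $4/\sqrt6$, and a primitive character for $1$. Since they give $T(t)\sim C t/\sqrt q$ uniformly in $t$, integrating against $t^{-2}$ yields $\sim C\log x/\sqrt q$. For this I need the lower-bound constructions to hold uniformly over all scales $t\in[t_0,x]$ with fixed $f$ and $\alpha$, which I expect holds for $\alpha=a/q$.
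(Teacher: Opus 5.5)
Your treatment of the sharp bounds \eqref{eq:general-sharp-log}, \eqref{eq:complete-sharp-log} and of optimality is essentially the paper's. You use partial summation from Theorem~\ref{thm:main} with the trivial bound on an initial segment, and the periodic example and primitive characters for the lower bounds. One small point: the $o(1)$ in Theorem~\ref{thm:main} is only small once the length is large. You should therefore start the integral at some $T\to\infty$ with $\log T=o(\log x/\sqrt q)$, for instance $\log T=(\log x\log\log x/\sqrt q)^{1/2}$ as in the paper, rather than at $t_0$, which stays bounded when $q$ does. The trivial bound handles $t\leq T$.

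The genuine gap is in \eqref{eq:logarithmic-corollary}. You conclude that the $O(t/\log t)$ term produces a fatal $\log\log x$ loss. You then fall back on either an unproved refinement of Theorem~\ref{thm:main} or an unspecified log-weighted Montgomery--Vaughan argument, so as written this part is not proved. In fact crude partial summation already suffices, and you have both ingredients in hand.

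First, the total loss is $O(\log\log x)$. It comes from $\int_{t_0}^x\d t/(t\log t)$, and from the initial segment if you use the simpler cutoff $U=qL^2(\log\log x)^3$, for which $\log U\leq\log q+O(\log\log x)$. Second, $\log\log x\ll\log q+\log x/\sqrt q$ always. If $q\leq(\log x/\log\log x)^2$, then $\log\log x\leq\log x/\sqrt q$, as you yourself note. If $q>(\log x/\log\log x)^2$, then $\log q\geq 2\log\log x-2\log\log\log x\gg\log\log x$. That case split is exactly the paper's proof. The case $q>Q_{\rm MV}(x)$ follows from the trivial harmonic bound, since then $\log q\asymp\log x$.

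Your detour through $t_1=\exp(\sqrt q)$ is a red herring. Theorem~\ref{thm:main} is available as soon as $q\leq Q_{\rm MV}(t)$. The error also need not be dominated by $\log t/\sqrt q$ pointwise in $t$, only by the total $\log q+\log x/\sqrt q$.
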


It is natural to ask whether the sharp constants
remain valid in larger ranges, or whether similar
estimates hold for sums over smooth numbers.
Some of these stronger statements turn out to be
false. We give three counterexamples to conjectures
(1.3), (1.4), and (1.6) of
\cite{GranvilleLamzouri2026}.

As usual, $P^+(n)$ denotes the largest prime factor
of $n$, and $\Psi(x,y)$ denotes the number of integers
$n\leq x$ for which $P^+(n)\leq y$.

\begin{prop}[Three counterexamples]\label{prop:counterexamples}
The following statements hold.

\begin{enumerate}

\item
There are $1$-bounded multiplicative functions $f_j$
and parameters $q_j,x_j\to\infty$, with
$q_j\leq(\log x_j)^2$, such that
\[
 |S_{f_j}(x_j,1/q_j)|
 =
 \left(\frac4{\sqrt6}+o(1)\right)
 \frac{x_j}{\sqrt{q_j}}.
\]
Thus the leading constant $1$ in conjecture~(1.3)
is false for general multiplicative functions.

\item
There are squarefree integers $q_j\to\infty$,
real numbers $x_j\geq y_j\geq q_j^2$ tending to
infinity, reduced numerators $a_j$, and periodic
unimodular multiplicative functions $f_j$ such that
\[
 \frac{
 \left|
 \sum_{\substack{n\leq x_j\\P^+(n)\leq y_j}}
 f_j(n)\e(a_jn/q_j)
 \right|
 }{
 \Psi(x_j,y_j)/\sqrt{q_j}
 +\sqrt{x_jy_j}/\log x_j
 }
 \longrightarrow\infty.
\]
Hence conjecture~(1.4) fails in order of magnitude.

\item
For $f\equiv1$ and
$q=\lfloor(\log x/\log\log x)^2\rfloor$, we have
\[
 \left|
 \sum_{n\leq x}\frac{\e(n/q)}n
 \right|
 =(2+o(1))\log\log x.
\]
This exceeds the proposed upper bound with leading
constant $4/\sqrt6$ in conjecture~(1.6).
Thus that conjecture fails at its stated endpoint,
even for completely multiplicative functions.

\end{enumerate}
\end{prop}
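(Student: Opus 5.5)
We prove the three parts separately. Each comes from an explicit construction followed by an asymptotic evaluation. Part (1) is a lower-bound construction matching Theorem~\ref{thm:main}. Part (2) exploits the freedom of $f$ on smooth numbers. Part (3) is a direct computation.

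\emph{Part (1).} We take $q_j=6r_j$, where $r_j$ is a large prime (or a squarefree number coprime to $6$). We set $f_j=g\cdot\chi$, where $\chi$ is a primitive character modulo $r_j$ and $g$ is supported on $\{2,3\}$-adic parts. The values $g(2^k),g(3^k)$ are chosen so that the local factors at $2$ and $3$ of the twisted Gauss-sum expansion are maximal. For a multiplicative $f$ periodic modulo $q$, we write $n=2^a3^b m$ with $(m,6)=1$. Using $\e(n/q)$ and the Chinese remainder theorem, the sum factors as
\[
S_{f}(x,1/q)=\frac{x}{q}\,\prod_{p\mid q}L_p+o\!\left(\frac{x}{\sqrt q}\right),
\]
where $L_r$ is a Gauss sum of modulus $\sqrt r$. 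The local sums $L_2,L_3$ are weighted sums over $2^a$ and $3^b$ of $g(\cdot)$ times a root of unity. Since $f(2^k)$ may be chosen freely for each $k$ (no complete multiplicativity), we can align phases and maximize $|L_2|\,|L_3|/\sqrt6$. We then compute this maximum. The plan is to verify that the optimum of the $2$-local factor and the $3$-local factor gives $4/\sqrt 6$ after normalizing by $\sqrt{q}$, presumably via contributions $\sum_k 2^{-k}$-type geometric weights. This matches the constant in Theorem~\ref{thm:main}. Finally, we take $x_j$ with $q_j\le(\log x_j)^2$ and large enough for the periodic main term to dominate, using Pólya–Vinogradov for the error. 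This exhibits the ratio $4/\sqrt6$ and contradicts leading constant $1$.

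\emph{Part (2).} We take $f_j$ periodic modulo $q_j$ and unimodular, with $y_j$ only slightly larger than $q_j^2$ and $x_j$ a huge power of $y_j$ (so $u=\log x/\log y\to\infty$). In that range, $\Psi(x,y)$ is much smaller than $x$. The smooth sum can be arranged so that a Gauss-sum main term still sees the density $\Psi(x,y)/x$ but gains through multiplicative correlations. More concretely, we choose $f_j(p)$ for $p\le y$ aligned with $\e(ap/q)$ in the dominant ranges. Then the sum is of size $\gg\Psi(x,y)\cdot\rho$, with $\rho$ decaying more slowly than $1/\sqrt q$. At the same time, $\sqrt{xy}/\log x$ is tiny compared with $\Psi(x,y)$ only when $u$ is moderate. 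Choosing $u$ in the window where $\Psi(x,y)\ge x^{1/2+\delta}$ keeps the second denominator term negligible. The main obstacle is making the numerator genuinely larger than $\Psi/\sqrt q$. I expect to use the saddle-point description of $\Psi(x,y)$, under which smooth numbers concentrate on integers with many small prime factors. Then a large power of a fixed residue pattern forces correlation with the additive phase. This is the hardest and least certain step.

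\emph{Part (3).} We write $\sum_{n\le x}\e(n/q)/n=-\log(1-\e(1/q))+O(q/x)$ by partial summation. Here $|1-\e(1/q)|\sim 2\pi/q$, so the sum equals $\log q-\log 2\pi+i(\pi/2-\pi/q)+o(1)$, whose modulus is $(1+o(1))\log q$. With $q=\lfloor(\log x/\log\log x)^2\rfloor$, we have $\log q=(2+o(1))\log\log x$. The conjectured bound $(4/\sqrt6)\log x/\sqrt q$ at this endpoint is $(4/\sqrt6)\log\log x<2\log\log x$, which gives the contradiction.
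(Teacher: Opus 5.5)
Parts (i) and (iii) follow the paper. In (i), make the phase alignment explicit: take $g(2^k)=g(3^k)=-1$ for $k\geq1$ and $g(p^k)=1$ for $p>3$. Then $f=g\chi$ is periodic modulo $q=6\ell$. By the Chinese remainder theorem, the sum over one period is a mod-$6$ factor equal to $4$ (the $2$- and $3$-local factors are both $-2$) times $\chi(6)\tau(\chi)$. Periodicity leaves an $O(q)$ error, so you need neither P\'olya--Vinogradov nor geometric weights over $2^k$. Part (iii) is exactly the paper's computation.

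Part (ii), however, has a genuine gap: you give no construction, and the mechanism you sketch cannot supply one. Suppose $f$ is unimodular, multiplicative and periodic modulo $q$, as you propose. Then on integers coprime to $q$ it is a Dirichlet character: for $(mn,q)=1$ pick $n'\equiv n\pmod q$ with $(n',m)=1$, so $f(mn)=f(mn')=f(m)f(n')=f(m)f(n)$. So there is no freedom to align $f(p)$ with $\e(ap/q)$. Even without periodicity, alignment at primes would not control $f(n)\e(an/q)$ at composite $n$, because the additive phase is not multiplicative. The missing idea is the explicit example $f_q(n)=\mu((n,q))$ for squarefree $q$, combined with averaging over the reduced numerators. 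Since $\mu(d)c_q(n)=\mu(q)\phi(d)$ for $d=(n,q)$, the average over $a$ of the smooth sums equals $\mu(q)M_q(x,y)$, where $M_q(x,y)=\phi(q)^{-1}\sum_{r\mid q}\prod_{p\mid r}(p-2)\,\Psi(x/r,y)>0$. Some reduced $a$ then has $|S_q(a;x,y)|\geq M_q(x,y)$.

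Your choice of regime also defeats this mechanism. Let $\alpha$ be the Hildebrand--Tenenbaum saddle point of $(x,y)$. For fixed $q$ one has $\Psi(x/r,y)\sim r^{-\alpha}\Psi(x,y)$, so $\sqrt q\,M_q(x,y)/\Psi(x,y)$ is asymptotic to $\prod_{p\mid q}\sqrt p\,(1+(p-2)p^{-\alpha})/(p-1)$. Your window $\Psi(x,y)\geq x^{1/2+\delta}$ corresponds to $\alpha\geq\frac12+\delta-o(1)$. There the local factors are below $1$ for all large $p$, so this Euler-factor gain stays bounded. The paper instead chooses $x$ so that $\alpha=\frac12$ exactly. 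Then each factor equals $1+1/(\sqrt p+1)$, and the product diverges along primorials. At the same time $\log\bigl(\Psi(x,y)\log x/\sqrt{xy}\bigr)\sim2\sqrt y/\log y$, so $\sqrt{xy}/\log x=o(\Psi(x,y)/\sqrt q)$ still holds. The order of limits matters as well. The paper fixes $q$ while $y\to\infty$, so $y\geq q^2$ is automatic and only fixed-$q$ smooth-number asymptotics are used. Taking $y$ barely above $q^2$, as you suggest, would require those asymptotics uniformly in $q$.
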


These examples illustrate three different
obstructions. The first comes from the local
behavior of a multiplicative function at small
primes. The second shows that smooth-number sums
can contain additional Euler factors whose size
grows with $q$. The third shows why the $\log q$
term in \eqref{eq:logarithmic-corollary} cannot
always be removed.

\subsection{Some background}

A central idea in the study of multiplicative
functions is that a large mean often comes from
a function which resembles $\chi(n)n^{it}$, where
$\chi$ is a Dirichlet character and $t$ is real.
This is the starting point of the pretentious
approach developed by Granville and Soundararajan
\cite{GranvilleSoundararajan2007}.
Related results were obtained by Goldmakher
\cite{Goldmakher2012} and by Bober, Goldmakher,
Granville and Koukoulopoulos
\cite{BoberGoldmakherGranvilleKoukoulopoulos2018}.

An important tool for us is the pretentious
large sieve of Granville, Harper and Soundararajan
\cite{GranvilleHarperSoundararajan2019}.
Roughly speaking, it says that the character
sums associated with a bounded multiplicative
function cannot be large for too many
Dirichlet characters at once. In the range
we need, it allows us to remove all but one
possible exceptional character.

Another ingredient comes from the theory of
smooth numbers. Exponential sums over smooth
numbers have been studied by de la Bret\`eche
\cite{Breteche1998}, Maier \cite{Maier2006},
and de la Bret\`eche and Tenenbaum
\cite{BretecheTenenbaum2007}.
Their work is closely related to the distribution
of smooth numbers in arithmetic progressions,
studied by Fouvry and Tenenbaum
\cite{FouvryTenenbaum1991}, Granville
\cite{Granville1993}, and Harper
\cite{Harper2012}. Further estimates were
obtained by Harper \cite{Harper2016}.

Granville and Lamzouri
\cite[Theorem~1.2]{GranvilleLamzouri2026}
obtained an upper bound for exponential sums
over smooth numbers. Their estimate contains
an additional factor $2^{\omega(q)/2}$ in the
smooth contribution\rev{, where $\omega(q)$ denotes the number of distinct prime factors of $q$}. This factor is harmless
in some ranges, but it cannot be absorbed
uniformly near $q=(\log x)^2$.

Our approach is to apply Ramar\'e's identity
before expanding into Dirichlet characters.
This keeps the multiplicative structure of
the sum and allows us to use Shiu's theorem
together with the pretentious large sieve
without introducing that extra factor.

\subsection{Outline of the proof}

\rev{We briefly describe the proof.}

We first reduce a general multiplicative function
to one which is completely multiplicative at
primes not dividing $q$. The important point is
that we keep all the original values at prime
powers dividing $q$. A direct reduction to a
completely multiplicative function could change
the denominator of the rational phase, which
would make the resulting estimate weaker.

Next, we separate the integers according to
whether they have a prime factor greater than
$x/q$. Such integers \rev{can be written uniquely as}
$n=pm$, where $p>x/q$ is prime and $m<q$.
We group the primes according to the length
of the inner sum over $m$. Brun--Titchmarsh
and a simple second-moment argument then show
that their total contribution is $O(x/\log x)$.
For the shortest sums, we use the exact
structure of Ramanujan sums to avoid a loss
depending on $q/\phi(q)$.

It remains to treat integers whose prime factors
are all at most $x/q$. Here we use Ramar\'e's
identity to extract one prime factor larger than
$(\log x)^{10}$. The usual Ramar\'e weight
is not multiplicative, so we write it as an
integral instead. This introduces the weight
$z^{\omega_T(n)}$, where $\omega_T(n)$ counts
the distinct prime factors of $n$ exceeding
$T=(\log x)^{10}$.

The advantage is that, for each fixed $z$,
this weight is multiplicative and non-negative.
We can therefore apply Shiu's theorem to
estimate the resulting weighted sums in
arithmetic progressions.

We then expand the sums into Dirichlet
characters modulo $q$. The pretentious
large sieve shows that all but at most one
character make a negligible contribution.
The remaining character is controlled
using Gauss sums and Shiu's theorem.
Finally, the integration over $z$ allows
us to sum over all prime intervals without
losing an extra factor of $\log\log x$.
This gives the required bound
$S_f(x,\alpha)\ll x/\log x+x/\sqrt q$.

To obtain the sharp constants, we go a step
further. We show that, up to a smaller error,
the original exponential sum is governed by
a single Dirichlet character. We then write
each integer as $n=dm$, where every prime
factor of $d$ divides $q$ and $(m,q)=1$.
This separates the ordinary character sum
from a factor involving Gauss sums.

The size of this factor determines the leading
constant. For general multiplicative functions,
its absolute norm is bounded by
$(4/\sqrt6)\sqrt q$. For completely
multiplicative functions, the local Euler
factors have additional cancellation, giving
the smaller bound $\sqrt q$. These are exactly
the two constants in Theorem~\ref{thm:main}.

Sections~\ref{sec:prelim}--\ref{sec:reduction}
give the preliminary reductions, and
Section~\ref{sec:large-prime} treats the
large-prime contribution.
Sections~\ref{sec:ramare}--\ref{sec:summing}
contain the Ramar\'e and \rev{Fourier-analytic} arguments.
Section~\ref{sec:completion} proves the
order-of-magnitude estimate, while
Section~\ref{sec:sharp-general} gives
the sharp constants. The logarithmic
applications and counterexamples are
proved in Sections~\ref{sec:logarithmic}
and~\ref{sec:counterexamples}.

\subsection*{Acknowledgements}

The author would like to thank Andrew Granville
for his careful reading of an earlier version
of this manuscript and for his many valuable
comments and suggestions. The author is also
grateful to Youness Lamzouri and Max Wenqiang Xu
for their helpful feedback and suggestions,
to Jianya Liu for his support and encouragement,
and to Xin Wang for many helpful discussions.

The author gratefully acknowledges financial
support from the China Scholarship Council (CSC).
Most of this work was carried out at the
Universit\'e de Montr\'eal, and the author
thanks the university for its hospitality
and excellent research environment.

\subsection*{AI Disclosure}

OpenAI's ChatGPT 5.6 Sol and 6 Astra were used during the
development and preparation of this article. The author first
proposed reducing the problem to the case of a completely
multiplicative function. In response, ChatGPT 5.6 Sol suggested \rev{modifying this reduction to impose}
complete multiplicativity only at the
primes not dividing $q$, \rev{as carried out} in Section~\ref{sec:reduction}, and
also suggested the subsequent Fourier expansion over
$(\Z/q\Z)^\times$ in Section~\ref{sec:gauss}.
The idea of applying Ramar\'e's identity to the smooth-number
contribution in Section~\ref{sec:ramare} and the use of the
pretentious large sieve due to Granville, Harper and
Soundararajan \cite{GranvilleHarperSoundararajan2019}
were proposed by the author. ChatGPT 6 Astra also helped construct
the second counterexample in
Proposition~\ref{prop:counterexamples}\emph{(ii)}.
\rev{The models also assisted} with preliminary phrasing, the
organization of the exposition, and the identification
of typographical errors.
The author takes full responsibility for all mathematical
statements and proofs in this paper.

\section{Notation and preliminary reductions}\label{sec:prelim}

The largest prime factor of $n$ is denoted by $P^{+}(n)$, with $P^{+}(1)=1$, and
$\Psi(X,Y):=\#\{n\leq X:P^{+}(n)\leq Y\}$. Throughout the proof,
$L=\log x$ and
\[
 c_q(h):=\sum_{r\in(\Z/q\Z)^\times}\e(hr/q)
\]
is the Ramanujan sum. \rev{Unless otherwise stated, implied constants are absolute;
dependence on fixed parameters is indicated explicitly.} Dirichlet
characters are understood modulo the current modulus and are extended by
zero away from its unit group.

For a multiplicative function $g$ with Dirichlet series
$G(s)=\sum_{n\geq1}g(n)n^{-s}$, define its generalized von Mangoldt
coefficients by
$-G'(s)/G(s)=\sum_{n\geq1}\Lambda_g(n)n^{-s}$. Following
\cite{GranvilleHarperSoundararajan2019}, we write $g\in\mathcal C(1)$
when $|\Lambda_g(n)|\leq\Lambda(n)$ for every $n$, where $\Lambda$ is
the ordinary von Mangoldt function.

\rev{We shall sometimes replace the rational phase $a/q$ by a nearby real phase. The following elementary estimate is enough.}

\begin{lem}\label{lem:phase}
If $|\alpha-a/q|\leq 1/(qX)$ and $|f(n)|\leq1$, then
\begin{equation}\label{eq:phase}
 |S_f(X,\alpha)-S_f(X,a/q)|\ll \frac{X}{q}.
\end{equation}
\end{lem}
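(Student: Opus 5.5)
We compare the two sums by Abel summation against the slowly varying twist $\e(n\beta)$, where $\beta:=\alpha-a/q$ satisfies $|\beta|\leq 1/(qX)$. Writing $A(t):=S_f(t,a/q)$, we have
\[
 S_f(X,\alpha)=\sum_{n\leq X}f(n)\e(na/q)\e(n\beta)
 =A(X)\e(X\beta)-2\pi i\beta\int_1^X A(t)\e(t\beta)\,\d t .
\]
Subtracting $A(X)$ gives
\[
 S_f(X,\alpha)-S_f(X,a/q)=A(X)\bigl(\e(X\beta)-1\bigr)-2\pi i\beta\int_1^X A(t)\e(t\beta)\,\d t .
\]

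We then bound the two terms using only the trivial estimate $|A(t)|\leq t$, which follows from $|f(n)|\leq1$. Since $|\e(X\beta)-1|\leq 2\pi X|\beta|\leq 2\pi/q$, the first term is at most $2\pi X/q$. The integral term is at most
\[
 2\pi|\beta|\int_1^X t\,\d t\leq \pi X^2|\beta|\leq \pi X/q .
\]
Together these give \eqref{eq:phase} with an absolute implied constant.

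An equivalent route avoids partial summation altogether: directly, $|\e(n\alpha)-\e(na/q)|=|\e(n\beta)-1|\leq 2\pi n|\beta|$, and summing over $n\leq X$ gives at most $\pi X(X+1)/(qX)\ll X/q$. This one-line termwise bound is what I would actually write, since it needs nothing beyond the triangle inequality.

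There is no genuine obstacle here. The only point to check is that the hypothesis $|\beta|\leq 1/(qX)$ exactly cancels one factor of $X$ from $\sum_{n\leq X} n\ll X^2$, which is what leaves the bound $X/q$.
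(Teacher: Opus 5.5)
Your proposal is correct, and your termwise bound $|\e(n\beta)-1|\leq 2\pi n|\beta|$, summed over $n\leq X$, is exactly the paper's one-line proof. The partial-summation version is also valid but unnecessary: it only recovers the same estimate via the trivial bound $|A(t)|\leq t$.
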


\begin{proof}
Using $|\e(t)-1|\ll|t|$, the left-hand side is at most
$|\alpha-a/q|\sum_{n\leq X}n\ll X/q$.
\end{proof}


We first dispose of the range in which the Montgomery--Vaughan theorem is already stronger than required.

\begin{prop}\label{prop:large-q}
Suppose that $(a,q)=1$, $L^{5/2}\leq q\leq Q_{\rm MV}(x)$, and
$|\alpha-a/q|\leq1/(qx)$. Then
$S_f(x,\alpha)\ll x/L$.
\end{prop}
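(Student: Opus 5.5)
The plan is to quote the Montgomery--Vaughan estimate \eqref{eq:MV-intro} at the rational point $a/q$, then move to $\alpha$ with Lemma~\ref{lem:phase}. The only real work is to control the ratio $q/\phi(q)$ in this range of $q$.

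\textbf{Step 1: the rational point.} Since $q\leq Q_{\rm MV}(x)$ and $(a,q)=1$, the bound \eqref{eq:MV-intro} gives
\[
 |S_f(x,a/q)|\ll \frac{x}{L}+\frac{x}{\sqrt{\phi(q)}}.
\]

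\textbf{Step 2: passing to $\alpha$.} Lemma~\ref{lem:phase} with $X=x$ shows that replacing $a/q$ by $\alpha$ costs $O(x/q)$. Since $q\geq L^{5/2}$, this is $O(x/L^{5/2})$, which is negligible.

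\textbf{Step 3: the term $x/\sqrt{\phi(q)}$.} We use the classical bound $q/\phi(q)\ll\log\log q$, which follows from Mertens' theorem: the product $\prod_{p\mid q}(1-1/p)^{-1}$ is largest when the primes dividing $q$ are the smallest ones. This gives
\[
 \frac{x}{\sqrt{\phi(q)}}=\frac{x}{\sqrt q}\Bigl(\frac{q}{\phi(q)}\Bigr)^{1/2}\ll \frac{x\sqrt{\log\log x}}{L^{5/4}}=o\Bigl(\frac{x}{L}\Bigr),
\]
using $q\geq L^{5/2}$ and $q\leq x$. Combining Steps 1--3 yields $S_f(x,\alpha)\ll x/L$.

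\textbf{Main obstacle.} The one point to check is whether \eqref{eq:MV-intro} is really available for all real $\alpha$ with $|\alpha-a/q|\leq1/(qx)$, or only at $\alpha=a/q$ as displayed. If it holds only at $a/q$, Step 2 covers the gap, since the error $O(x/q)$ is admissible precisely because $q\geq L^{5/2}$. Montgomery and Vaughan in fact state their result for general $\alpha$ under this approximation condition, so either reading works. The exponent $5/2$ is chosen exactly so that the loss $\sqrt{q/\phi(q)}\ll\sqrt{\log\log x}$ is absorbed by $L^{1/4}$.
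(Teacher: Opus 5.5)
Your proposal is correct and matches the paper's proof essentially step for step. You quote \eqref{eq:MV-intro} at $a/q$, absorb $\sqrt{q/\phi(q)}\ll\sqrt{\log\log x}$ using $q\geq L^{5/2}$, and pay $O(x/q)=o(x/L)$ via Lemma~\ref{lem:phase} to pass from $a/q$ to $\alpha$.
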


\begin{proof}
\rev{For these values of $q$,} \cite[Theorem~1]{MontgomeryVaughan1977} gives \eqref{eq:MV-intro}. The standard estimate $q/\phi(q)\ll\log\log(3q)$ yields
$x/\sqrt{\phi(q)}\ll x\sqrt{\log\log(3q)/q}=o(x/L)$ when $q\geq L^{5/2}$. Lemma~\ref{lem:phase} contributes $O(x/q)=o(x/L)$.
\end{proof}

It remains to treat $q\leq L^3$. \rev{This range overlaps with that of}
Proposition~\ref{prop:large-q}\rev{, so together they cover all admissible denominators.}

\section{Reduction to complete multiplicativity away from \texorpdfstring{$q$}{q}}\label{sec:reduction}

A standard passage from multiplicative to completely multiplicative functions may lower the denominator from $q$ to $q/(q,d)$ inside a convolution. The following variant avoids this loss.

For fixed $q$, define a multiplicative function $F_q$ on prime powers by
\begin{equation}\label{eq:Fq}
 F_q(p^k):=\begin{cases}f(p^k),&p\mid q,\\ f(p)^k,&p\nmid q.\end{cases}
\end{equation}
Thus $F_q$ is completely multiplicative at every prime not dividing $q$, while the full local data of $f$ are retained at the primes dividing $q$.

\begin{lem}\label{lem:conv}
There is a multiplicative function $h$ such that $f=h*F_q$, with $h(p^k)=0$ for $p\mid q$, $h(p)=0$ for $p\nmid q$, and
$h(p^k)=f(p^k)-f(p)f(p^{k-1})$ for $p\nmid q$ and $k\geq2$. In particular,
\begin{equation}\label{eq:h-sums}
 \sum_{d\geq1}\frac{|h(d)|}{d}<\infty,
 \qquad
 \sum_{d\geq1}\frac{|h(d)|}{d^{1-\delta}}<\infty
 \quad(0<\delta<1/2).
\end{equation}
\end{lem}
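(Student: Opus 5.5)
The plan is to define $h$ as the Dirichlet convolution $h:=f*F_q^{-1}$, where $F_q^{-1}$ is the Dirichlet inverse of $F_q$. Then $f=h*F_q$ holds automatically. Since $f$ and $F_q$ are multiplicative with $F_q(1)=1$, the inverse $F_q^{-1}$ exists and is multiplicative, and hence so is $h$. The lemma then reduces to computing $h$ on prime powers, which can be done one prime at a time by comparing local Euler factors, i.e.\ formal power series in $X=p^{-s}$.

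First take $p\mid q$. By \eqref{eq:Fq}, the local factor of $F_q$ at $p$ equals that of $f$, namely $\sum_{k\geq0}f(p^k)X^k$. The local factor of $h$ is therefore $1$, so $h(p^k)=0$ for all $k\geq1$.

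Next take $p\nmid q$. The local factor of $F_q$ is $\sum_k f(p)^kX^k=(1-f(p)X)^{-1}$, so the local factor of $F_q^{-1}$ is $1-f(p)X$. Multiplying by $\sum_k f(p^k)X^k$ gives the coefficients
\[
h(p)=f(p)-f(p)=0,\qquad h(p^k)=f(p^k)-f(p)f(p^{k-1})\quad(k\geq2),
\]
as claimed. Both cases are direct coefficient extractions, so they need no further work.

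For \eqref{eq:h-sums}, the formula above together with $|f|\leq1$ gives $|h(p^k)|\leq2$ for every $k\geq2$. Since $h$ is supported on squarefull integers coprime to $q$, for $0\leq\delta<1/2$ the multiplicativity of $|h|$ gives
\[
\sum_{d\geq1}\frac{|h(d)|}{d^{1-\delta}}=\prod_{p\nmid q}\Bigl(1+\sum_{k\geq2}\frac{|h(p^k)|}{p^{k(1-\delta)}}\Bigr)\leq\prod_{p}\Bigl(1+\frac{2p^{-2(1-\delta)}}{1-p^{-(1-\delta)}}\Bigr).
\]
This product converges, because $2(1-\delta)>1$ makes $\sum_p p^{-2(1-\delta)}$ finite, and $1-p^{-(1-\delta)}\geq1-2^{-1/2}$ for every prime $p$. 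The case $\delta=0$ is the first sum in \eqref{eq:h-sums}.

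The resulting bound depends only on $\delta$ and not on $f$ or $q$, which is the uniformity needed later. The only step requiring any care is justifying the rearrangement into an Euler product. Since all terms are non-negative, this is legitimate by absolute convergence, so I do not expect a genuine obstacle anywhere in the proof.
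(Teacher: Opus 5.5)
Your proof is correct and follows the paper's argument. Both define $h$ through local Euler factors, read off $h(p^k)$ prime by prime, and bound the sums by an Euler product over squarefull integers coprime to $q$ using $|h(p^k)|\leq 2$ and $2(1-\delta)>1$; the only cosmetic difference is that you multiply directly by the inverse factor $1-f(p)X$ at $p\nmid q$, whereas the paper reaches the same formula by subtracting $f(p)$ times the level-$(k-1)$ coefficient identity from the level-$k$ one.
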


\begin{proof}
Define $h$ multiplicatively by the identity of formal Euler factors
\[
 \sum_{k\geq0}\frac{f(p^k)}{p^{ks}}
 =\left(\sum_{k\geq0}\frac{h(p^k)}{p^{ks}}\right)
  \left(\sum_{k\geq0}\frac{F_q(p^k)}{p^{ks}}\right).
\]
If $p\mid q$, the two factors involving $f$ and $F_q$ are identical, whence $h(p^k)=0$ for every $k\geq1$. Suppose that $p\nmid q$. Since $F_q(p^k)=f(p)^k$, comparison of the coefficient of $p^{-s}$ gives $h(p)=0$. For $k\geq2$, the coefficient identity reads
$f(p^k)=h(p^k)+f(p)h(p^{k-1})+\cdots+f(p)^k$. Using the same identity at level $k-1$ and multiplying it by $f(p)$, all terms except the first cancel, and therefore
$h(p^k)=f(p^k)-f(p)f(p^{k-1})$.

It follows that $|h(p^k)|\leq2$, and that every integer in the support of $h$, apart from $1$, is powerful and coprime to $q$. Hence
\[
 \sum_{d\geq1}\frac{|h(d)|}{d}
 \leq\prod_{p\nmid q}\left(1+2\sum_{k\geq2}\frac1{p^k}\right)
 \leq\prod_p\left(1+\frac{2}{p(p-1)}\right)<\infty.
\]
More generally, if $0<\delta<1/2$, then
$\sum_{k\geq2}p^{-k(1-\delta)}\ll p^{-2(1-\delta)}$, and the corresponding Euler product converges because $2(1-\delta)>1$. This proves both assertions in \eqref{eq:h-sums}.
\end{proof}

\begin{prop}\label{prop:reduction}
Suppose that, whenever $X$ is large, $q\leq8(\log X)^3$, and
$F:\N\to\U$ is multiplicative and completely multiplicative at every
prime $p\nmid q$, one has
\begin{equation}\label{eq:local-goal}
 S_F(X,\theta)\ll \frac{X}{\log X}+\frac{X}{\sqrt q}
\end{equation}
for $|\theta-b/q|\leq1/(qX)$ and $(b,q)=1$. Then the same estimate
holds, under the same approximation hypothesis, for every multiplicative
$f:\N\to\U$ whenever $q\leq(\log x)^3$.
\end{prop}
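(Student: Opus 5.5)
We use the convolution of Lemma~\ref{lem:conv}, $f=h*F_q$, with $F:=F_q$ multiplicative, $1$-bounded, and completely multiplicative at every prime $p\nmid q$. Then
\[
 S_f(x,\alpha)=\sum_{d\geq1}h(d)\sum_{m\leq x/d}F_q(m)\e(dm\alpha)
 =\sum_{d}h(d)\,S_{F_q}(x/d,d\alpha).
\]
The key point is that every $d$ with $h(d)\neq0$ is coprime to $q$. Hence the phase $d\alpha$ is close to $da/q$ with $(da,q)=1$, so the denominator stays exactly $q$. We have
\[
 |d\alpha-da/q|\leq d/(qx)=1/(q\,(x/d)),
\]
so with $X=x/d$ and $b\equiv da \pmod q$ (replacing $da/q$ by $b/q$ changes $\theta$ by an integer, which does not affect $\e(n\theta)$), the approximation hypothesis $|\theta-b/q|\leq1/(qX)$ holds exactly. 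This is why the reduction keeps the local data at $p\mid q$.

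We split the $d$-sum at $D:=x^{1/3}$.

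\emph{Terms with $d\leq D$.} Here $X=x/d\geq x^{2/3}$ is large and $\log X\geq\frac23\log x$. The hypothesis $q\leq(\log x)^3\leq(\tfrac32)^3(\log X)^3\leq 8(\log X)^3$ holds, so \eqref{eq:local-goal} applies and gives
\[
 S_{F_q}(x/d,d\alpha)\ll \frac{x}{d\log x}+\frac{x}{d\sqrt q}.
\]
Summing against $|h(d)|$ and using $\sum_d|h(d)|/d<\infty$ from \eqref{eq:h-sums} gives $O(x/\log x+x/\sqrt q)$. The implied constant is absolute, since the Euler product bound in Lemma~\ref{lem:conv} is uniform in $q$.

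\emph{Terms with $d>D$.} We use the trivial bound $|S_{F_q}(x/d,\cdot)|\leq x/d$ together with the second estimate in \eqref{eq:h-sums} for a fixed $\delta$, say $\delta=1/4$:
\[
 \sum_{d>D}|h(d)|\frac{x}{d}\leq xD^{-\delta}\sum_d\frac{|h(d)|}{d^{1-\delta}}\ll x^{1-1/12},
\]
which is $o(x/\log x)$.

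The only delicate point is the uniformity needed to apply the hypothesis at many different scales $X=x/d$. The hypothesis is stated for large $X$ with an implied constant independent of $F$, $q$ and $\theta$, which is exactly what is used. The choice $D=x^{1/3}$ guarantees that $X$ is large and that the constant $8$ is sufficient, since $(3/2)^3<8$. Combining the two ranges proves the claim.
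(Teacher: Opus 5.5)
Your proposal is correct and follows the paper's proof essentially step for step. It uses the same expansion $f=h*F_q$, the same observation that $(d,q)=1$ keeps the denominator equal to $q$ with $|d\alpha-da/q|\leq 1/(q(x/d))$, the hypothesis for small $d$, and the bound $\sum|h(d)|/d^{1-\delta}<\infty$ for the tail. The only difference is that you split at $x^{1/3}$ (giving the factor $(3/2)^3\leq 8$ and tail $x^{11/12}$) where the paper splits at $\sqrt x$ (factor $2^3=8$, tail $x^{7/8}$), which is immaterial.
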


\begin{proof}
Expanding the convolution $f=h*F_q$ and changing the order of summation gives
\begin{equation}\label{eq:conv-sum}
 S_f(x,\alpha)=\sum_{\substack{d\leq x\\(d,q)=1}}h(d)S_{F_q}(x/d,d\alpha).
\end{equation}
The restriction $(d,q)=1$ \rev{follows from} Lemma~\ref{lem:conv}.
Since $(d,q)=1$, \rev{$ad$ is also coprime to $q$}, and
\[
 d\alpha=\frac{ad}{q}+d(\alpha-a/q),
 \qquad |d(\alpha-a/q)|\leq\frac{d}{qx}=\frac1{q(x/d)}.
\]
Thus every inner sum in \eqref{eq:conv-sum} has a rational approximation with the same denominator $q$. This is the point of the definition \eqref{eq:Fq}.

We split the $d$-sum at $\sqrt x$. If $d\leq\sqrt x$, then
$x/d\geq\sqrt x$ and $\log x\leq2\log(x/d)$. Consequently
$q\leq(\log x)^3\leq8(\log(x/d))^3$, so \eqref{eq:local-goal}
applies. Since $\log(x/d)\asymp\log x$ in this range,
\begin{align*}
 \sum_{d\leq\sqrt x}|h(d)|\left(\frac{x/d}{\log(x/d)}+\frac{x/d}{\sqrt q}\right)
 &\ll \left(\frac{x}{\log x}+\frac{x}{\sqrt q}\right)
       \sum_{d\geq1}\frac{|h(d)|}{d}\\
 &\ll \frac{x}{\log x}+\frac{x}{\sqrt q}.
\end{align*}
For $d>\sqrt x$ we use $|S_{F_q}(x/d,d\alpha)|\leq x/d$.
Taking $\delta=1/4$ in \eqref{eq:h-sums},
\[
 x\sum_{d>\sqrt x}\frac{|h(d)|}{d}
 \leq x^{7/8}\sum_{d\geq1}\frac{|h(d)|}{d^{3/4}}
 \ll x^{7/8}=o(x/\log x).
\]
Combining the two ranges proves the proposition.
\end{proof}

\rev{Until Section~\ref{sec:completion}, we use $F$ for a $1$-bounded multiplicative function
that is completely multiplicative at every prime not dividing $q$.}
\rev{We assume} $(a,q)=1$ and $q\leq8L^3$, and prove
\eqref{eq:local-goal} uniformly in this range.

\section{The contribution of a large prime factor}\label{sec:large-prime}

Since $q\leq8L^3$, we have $x/q>\sqrt x$ for large $x$.
Thus every $n\leq x$ with $P^{+}(n)>x/q$ has a unique representation
$n=pm$, where $p>x/q$ is prime and $m<q$.
\rev{The key parameter is the length $\lfloor x/p\rfloor$ of the inner sum.}
We group primes for which this length is fixed. Additive Parseval controls the
longer resulting sums, whereas the exact support of the Ramanujan kernel
controls the shortest ones without a loss depending on $q/\phi(q)$.

\begin{prop}\label{prop:large-prime}
Uniformly for $q\leq8L^3$ and $(a,q)=1$,
\begin{equation}\label{eq:large-prime}
 \sum_{\substack{n\leq x\\P^{+}(n)>x/q}}F(n)\e(an/q)\ll \frac{x}{\log x}.
\end{equation}
\end{prop}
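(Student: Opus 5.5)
Write each $n$ in the sum uniquely as $n=pm$ with $p>x/q$ prime and $m\leq x/p<q$. Since $p>x/q>\sqrt x>q\geq m$, we have $p\nmid m$ and $p\nmid q$, so $F(n)=F(p)F(m)$. The sum becomes
\[
 \sum_{m<q}F(m)\sum_{x/q<p\leq x/m}F(p)\e(apm/q).
\]
Rather than summing over $m$ directly, group the primes by the integer $k=\lfloor x/p\rfloor\in[1,q)$. Write $\mathcal P_k$ for the primes in $(x/(k+1),x/k]$. The contribution of $\mathcal P_k$ is
\[
 \sum_{p\in\mathcal P_k}F(p)\,T_k(p),\qquad T_k(p):=\sum_{m\leq k}F(m)\e(apm/q).
\]
By Brun--Titchmarsh, $|\mathcal P_k|\ll x/(k^2\log(x/k^2))$. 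Since $k<q\leq8L^3$, this is $\ll x/(k^2L)$, uniformly for $k$ up to $L^3$, and also on residue classes modulo $q$ (because $x/k^2\geq x/q^2$ is much larger than $q$).

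For each $k$, split $\mathcal P_k$ by residue class $r=p\bmod q$, with $(r,q)=1$. Then $T_k(p)=T_k(r)$ depends only on $r$. Brun--Titchmarsh in progressions gives $\#\{p\in\mathcal P_k:p\equiv r\}\ll x/(k^2\phi(q)L)$. Cauchy--Schwarz together with additive Parseval over $r\bmod q$ yields
\[
 \sum_{(r,q)=1}|T_k(r)|\leq\sqrt{\phi(q)}\Big(\sum_{r\bmod q}|T_k(r)|^2\Big)^{1/2}=\sqrt{\phi(q)}\Big(q\sum_{m\leq k}|F(m)|^2\Big)^{1/2}\leq\sqrt{\phi(q)\,qk},
\]
where Parseval applies because $m\mapsto am$ is injective modulo $q$ for $m\leq k<q$. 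So the contribution of $\mathcal P_k$ is
\[
 \ll\frac{x}{k^2\phi(q)L}\sqrt{\phi(q)qk}=\frac{x}{L}\,k^{-3/2}\sqrt{q/\phi(q)}.
\]
Summing over $k\geq K_0$ gives $\ll (x/L)\sqrt{q/\phi(q)/K_0}$. This is $O(x/L)$ once $K_0\geq q/\phi(q)$, which is $\ll\log\log q$.

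The main obstacle is the short range $k<K_0\ll\log\log L$, where the factor $\sqrt{q/\phi(q)}$ is not yet beaten. Here $T_k$ has boundedly-many-in-$\log\log$ terms, so I will use a first moment rather than a second one. Expand
\[
 \sum_{(r,q)=1}|T_k(r)|^2=\sum_{m,m'\leq k}F(m)\ov{F(m')}\,c_q\big(a(m-m')\big).
\]
Since $|m-m'|<k$, the Ramanujan sum satisfies $c_q(h)=\mu(q/(q,h))\phi(q)/\phi(q/(q,h))$. With $(q,h)\leq k$, this gives $|c_q(h)|\leq\phi(q)\,k/\phi(\cdot)$-type bounds, and $c_q(h)=0$ unless $q/(q,h)$ is squarefree. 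The diagonal contributes $\phi(q)k$. For the off-diagonal, the factor $\phi((q,h))\leq k$ with the constraint $(q,h)\mid h$ gives a total $\ll\phi(q)k^{2}\cdot O(k/q^{\ast})$, which is $\ll\phi(q)k^{O(1)}$. Hence $\sum_r|T_k(r)|\ll\phi(q)k^{O(1)}$ without any loss of $q/\phi(q)$.

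Multiplying by the per-class prime count $x/(k^2\phi(q)L)$ gives a contribution $\ll (x/L)k^{O(1)-2}$. Summed over $k<K_0$, this is $(x/L)(\log\log L)^{O(1)}$, which is still too big. I therefore expect to exploit the exact support more carefully. Only pairs with $m\equiv m'$ modulo $(q,h)$ structure survive, and for small $k$ one can simply use the trivial bound $|T_k(r)|\leq k$ and $\sum_k|\mathcal P_k|k\ll(x/L)\sum_{k<K_0}1/k\ll(x/L)\log K_0$. This is $(x/L)\log\log\log L$, so a genuine gain is needed.

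The cleanest route, which I would try first, is to refine the Parseval step. Choose $K_0$ to be an absolute constant instead. Then bound the range $K_0\leq k$ using, in place of $\sqrt{\phi(q)qk}$, the Ramanujan-sum expansion. That expansion gives
\[
 \sum_r|T_k(r)|^2\leq\phi(q)\sum_{d\mid q,\,d\leq k}\frac{\mu^2(q/d)\,d}{\phi(d)}\cdot\#\{m\equiv m'\ (d)\}\ll\phi(q)\,k\sum_{d\mid q,\ d\leq k}\frac{k}{\phi(d)}\cdot\frac1d\cdots,
\]
which is $\ll\phi(q)k\log k$. This replaces $q/\phi(q)$ by $\log k$, and $\sum_k k^{-3/2}\sqrt{\log k}$ converges, giving \eqref{eq:large-prime}.
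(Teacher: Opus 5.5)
Your treatment of the range $k\geq q/\phi(q)$ is correct and coincides with the paper's; it uses Brun--Titchmarsh in progressions, Cauchy--Schwarz and additive Parseval. The gap is the short range $k<q/\phi(q)$. That range is the whole point of the proposition, because it is exactly where the factor $\sqrt{q/\phi(q)}$ must be avoided.

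\begin{itemize}
\item Your first attempt there throws away the saving by bounding the off-diagonal by $\phi(q)k^{O(1)}$.
\item The trivial bound loses $\log\log\log L$, as you note.
\item The final ``cleanest route'' asserts $\sum_{(r,q)=1}|T_k(r)|^2\ll\phi(q)k\log k$ for all $k\geq K_0$ without a valid derivation.
\end{itemize}

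Your display attaches to the pairs with $d\mid m-m'$ a weight $\phi(q)\mu^2(q/d)d/\phi(d)\geq\phi(q)\mu^2(q/d)$. The true size, with $d=(q,h)$, is $|c_q(h)|=\mu^2(q/d)\phi(q)/\phi(q/d)\leq d$. Dropping the factor $1/\phi(q/d)$ discards the entire gain: for squarefree $q$ the term $d=1$ alone is already $\phi(q)k^2$.

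Moreover, even with the correct size of $c_q(h)$, the absolute-value expansion cannot replace Parseval throughout $k\geq K_0$. For squarefree $q$ and $k=q-1$ one has $\sum_{1\leq h<k}|c_q(h)|/\phi(q)=\tau(q)-1-1/\phi(q)$, which can be far larger than $\log k$. So the Ramanujan expansion can only be used for small $k$, and Parseval must be kept for large $k$.

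The missing observation is that in the short range the off-diagonal terms are negligible outright. Using the classical bound $|c_q(h)|\leq(q,h)\leq h$, for $k\leq q/\phi(q)$ one gets
\[
 \sum_{(r,q)=1}|T_k(r)|^2
 \leq\phi(q)k+2k\sum_{1\leq h<k}h
 \leq\phi(q)k\Bigl(1+\frac{(q/\phi(q))^2}{\phi(q)}\Bigr)
 \ll\phi(q)k,
\]
since $(q/\phi(q))^2/\phi(q)$ is bounded by an absolute constant. Each such block then contributes $\ll(x/L)k^{-3/2}$, and together with your Parseval range this proves the proposition.

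The paper makes the same split with a slightly sharper local estimate. It uses the exact support of $c_q(h)$, which vanishes unless $q/\rad(q)\mid h$, to get $\sum_{1\leq h<k}|c_q(h)|/\phi(q)\leq k^2/\phi(\rad(q))\leq 81/2$ for $k\leq(q/\phi(q))^2$. Hence $\mathcal E_q(k)\leq 82k$ there, and Parseval is used beyond that threshold.

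Your first attempt was one step away from this. You only needed to check that $k^2$ is dominated by $\phi(q)$ (or $\phi(\rad(q))$) throughout the short range, instead of settling for $k^{O(1)}$.
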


\begin{proof}
Let $n\leq x$ satisfy $P^{+}(n)>x/q$. Since $x/q>\sqrt x$, the largest prime factor of $n$ occurs to the first power and is the unique prime factor of $n$ exceeding $x/q$. Thus $n=pm$ uniquely, where $p>x/q$ is prime and $m<x/p<q$. In particular $p\nmid m$, so ordinary multiplicativity gives $F(pm)=F(p)F(m)$.

For $1\leq k<q$, put $I_k=(x/(k+1),x/k]$ and
$A_k(b):=\sum_{m\leq k}F(m)\e(abm/q)$. The condition $p\in I_k$ is equivalent to $\lfloor x/p\rfloor=k$, and therefore
\begin{equation}\label{eq:large-prime-decomp}
 \sum_{\substack{n\leq x\\P^{+}(n)>x/q}}F(n)\e(an/q)
 =\sum_{k<q}\sum_{p\in I_k}F(p)A_k(p).
\end{equation}
The length of $I_k$ is
$|I_k|=x/(k(k+1))\asymp x/k^2$. Since $k<q\leq8L^3$, we have
$|I_k|/q\geq x/q^3=x^{1-o(1)}$. The short-interval
Brun--Titchmarsh inequality bounds the primes in an interval of length
$H>q$ and a reduced class by
$\ll H/\{\phi(q)\log(H/q)\}$. \rev{Applying this to} $I_k$ \rev{as in}
\cite[(3.20)]{GranvilleLamzouri2026} \rev{gives}
\begin{equation}\label{eq:BT}
 \#\{p\in I_k:p\equiv b\pmod q\}
 \ll \frac{x}{\phi(q)k^2\log x}
\end{equation}
for every reduced residue class $b\pmod q$, uniformly in $k<q$. Here the logarithm in Brun--Titchmarsh is $\log(|I_k|/q)\asymp\log x$.

Define
$\mathcal E_q(k):=\phi(q)^{-1}\sum_{b\bmod q,(b,q)=1}|A_k(b)|^2$. Grouping the primes in the inner sum of \eqref{eq:large-prime-decomp} by residue class, and then using \eqref{eq:BT}, gives
\begin{align*}
 \left|\sum_{p\in I_k}F(p)A_k(p)\right|
 &\leq\sum_{\substack{b\bmod q\\(b,q)=1}}|A_k(b)|
       \#\{p\in I_k:p\equiv b\pmod q\}\\
 &\ll \frac{x}{\phi(q)k^2\log x}
       \sum_{\substack{b\bmod q\\(b,q)=1}}|A_k(b)|\\
 &\leq \frac{x}{k^2\log x}\sqrt{\mathcal E_q(k)}.
\end{align*}
Thus each $k$-block is bounded by $x\sqrt{\mathcal E_q(k)}/(k^2\log x)$.

We estimate $\mathcal E_q(k)$ in two different ways. Enlarging the average from the reduced residue classes to all additive frequencies and applying Parseval gives
\[
 \mathcal E_q(k)
 \leq\frac1{\phi(q)}\sum_{b\bmod q}|A_k(b)|^2
 =\frac q{\phi(q)}\sum_{m\leq k}|F(m)|^2
 \leq\frac q{\phi(q)}k.
\]
For small $k$ one can do better. Expanding the square over reduced residue classes yields the exact identity
\[
 \mathcal E_q(k)=\frac1{\phi(q)}
 \sum_{m,n\leq k}F(m)\ov{F(n)}c_q(m-n).
\]
The diagonal contributes at most $k$, while for each $1\leq h<k$ there are at most $k$ pairs with $m-n=h$ and at most $k$ with $n-m=h$. Hence
\begin{equation}\label{eq:energy-ramanujan}
 \mathcal E_q(k)\leq k+2k\sum_{1\leq h<k}\frac{|c_q(h)|}{\phi(q)}.
\end{equation}
The prime-power formula for Ramanujan sums implies
\begin{equation}\label{eq:ram-support}
 \frac{|c_q(h)|}{\phi(q)}=
 \begin{cases}
  \phi((u,\rad(q)))/\phi(\rad(q)),&h=\frac{q}{\rad(q)}u,\\
  0,&\frac{q}{\rad(q)}\nmid h.
 \end{cases}
\end{equation}
Indeed, for $p^a\Vert q$ the local factor vanishes unless $p^{a-1}\mid h$; after writing $h=(q/\rad(q))u$, it is $1$ if $p\mid u$ and $1/(p-1)$ otherwise. Therefore,
\[
 \sum_{1\leq h<k}\frac{|c_q(h)|}{\phi(q)}
 \leq\frac1{\phi(\rad(q))}\sum_{u\leq k\rad(q)/q}\phi((u,\rad(q)))
 \leq\frac{k^2}{\phi(\rad(q))}.
\]
If $k\leq(q/\phi(q))^2$, then
\[
 \frac{k^2}{\phi(\rad(q))}\leq\frac{(q/\phi(q))^4}{\phi(\rad(q))}
 =\prod_{p\mid q}\frac{p^4}{(p-1)^5}\leq\frac{81}{2}.
\]
\rev{The last inequality holds because each local factor is less than $1$ for $p\geq5$, while the factors at $2$ and $3$ have product $81/2$.} Substitution in \eqref{eq:energy-ramanujan} gives
$\mathcal E_q(k)\leq82k$ for $k\leq(q/\phi(q))^2$.

We now sum the blocks. For $k\leq(q/\phi(q))^2$ use the short estimate, and for larger $k$ use Parseval:
\begin{align*}
 \sum_{k<q}\frac{\sqrt{\mathcal E_q(k)}}{k^2}
 &\ll\sum_{k\leq(q/\phi(q))^2}k^{-3/2}
   +\sqrt{q/\phi(q)}\sum_{k>(q/\phi(q))^2}k^{-3/2}\\
 &\ll1.
\end{align*}
Summing the preceding block estimate over $k<q$ proves \eqref{eq:large-prime}.
\end{proof}

\section{Ramar\'e's identity for the smooth-number contribution}
\label{sec:ramare}

Put $L=\log x$, $T=L^{10}$ and $y=x/q$. Recall that $q\leq8L^3$ and
that $F$ is completely multiplicative at every prime $p\nmid q$.
We must estimate
\begin{equation}\label{eq:core-def}
 C(x):=\sum_{\substack{n\leq x\\ P^+(n)\leq y}}F(n)\e(an/q).
\end{equation}
Rather than partitioning according to the largest prime factor, we
apply a Ramar\'e identity \cite[\S17.3]{Ramare2009} to
\emph{all} primes above $T$. \rev{The averaging in this identity is essential to avoid losing a factor of $\log\log x$.}

For $n\geq1$, put
\[
 \omega_T(n):=\#\{p>T:p\mid n\},\qquad
 w_z(n):=z^{\omega_T(n)}\quad(0\leq z\leq1),
\]
with $0^0=1$. The function $w_z$ is non-negative and multiplicative;
$w_z(p^k)=1$ for $p\leq T$ and $w_z(p^k)=z$ for $p>T$.
For every $n$ with $\omega_T(n)\geq1$ in which all primes $p>T$
occur to the first power, the identity
\begin{equation}\label{eq:ramare-integral}
 1=\sum_{\substack{p\mid n\\p>T}}
   \frac1{\omega_T(n)}
 =\sum_{\substack{p\mid n\\p>T}}
   \int_0^1 w_z(n/p)\,\d z
\end{equation}
follows from $\int_0^1z^k\,\d z=1/(k+1)$.
The excluded integers contribute at most
\begin{equation}\label{eq:ramare-exceptions}
 \Psi(x,T)+\sum_{p>T}\left\lfloor\frac{x}{p^2}\right\rfloor
 \ll \Psi(x,T)+\frac{x}{T}=o(x/L).
\end{equation}
Indeed, Rankin's trick with exponent $99/100$ gives
$\Psi(x,T)\leq x^{99/100+o(1)}$.
Since $T>q$, whenever $T<p\leq y$ and $m\leq x/p$,
we have $m<x/T<y$; hence $P^+(m)<y$ is automatic.
\rev{The terms with $p\mid m$ contribute at most}
$O(x/T)$\rev{, so multiplicativity gives}
\begin{equation}\label{eq:ramare-decomp}
 C(x)=\int_0^1\sum_{T<p\leq y}F(p)
  \sum_{m\leq x/p}F(m)w_z(m)\e(apm/q)\,\d z
  +O\left(\Psi(x,T)+\frac{x}{T}\right).
\end{equation}

Define the prime blocks by
\begin{equation}\label{eq:ramare-blocks}
 t_0=T,\qquad t_{j+1}=t_j+t_j^{2/3},\qquad
 \Delta_j=t_j^{2/3},\qquad M_j=x/t_j,
\end{equation}
and let $J$ be the least integer for which $t_J\geq y$.
For $0\leq j<J$, define on $(\Z/q\Z)^\times$
\begin{equation}\label{eq:Aj}
 A_{j,z}(r):=\sum_{m\leq M_j}F(m)w_z(m)\e(amr/q).
\end{equation}
If $t_j<p\leq t_{j+1}$, replacing $x/p$ by $M_j$ introduces
an error bounded by $1+x\Delta_j/t_j^2$, uniformly in $z$.
Consequently the total freezing error is
\begin{equation}\label{eq:ramare-freezing}
 \ll x\sum_{j<J}\frac{\Delta_j^2}{t_j^2}+\pi(y)
 \ll xT^{-1/3}+\frac{y}{\log y}.
\end{equation}
By the short-interval Brun--Titchmarsh inequality, uniformly for
$(r,q)=1$,
\[
 \#\{t_j<p\leq\min(t_{j+1},y):p\equiv r\pmod q\}
 \ll\frac{\Delta_j}{\phi(q)\log t_j},
\]
since $\log(\Delta_j/q)\gg\log t_j$.
Combining this with \eqref{eq:ramare-decomp} and
\eqref{eq:ramare-freezing} yields
\begin{equation}\label{eq:ramare-master}
 |C(x)|\ll
 \int_0^1\sum_{j<J}\frac{\Delta_j}{\phi(q)\log t_j}
 \sum_{r\in(\Z/q\Z)^\times}|A_{j,z}(r)|\,\d z
 +\Psi(x,T)+xT^{-1/3}+\frac{y}{\log y}.
\end{equation}

\subsection{Shiu's theorem for the Ramar\'e weights}

For $u\geq1$ set
\begin{equation}\label{eq:rho-def}
 \rho_z(u):=\min\left\{1,
 \left(\frac{\log(2T)}{\log(2u)}\right)^{1-z}\right\}
 \qquad (0\leq z\leq1).
\end{equation}

\begin{lem}\label{lem:shiu-weight}
Uniformly for $0\leq z\leq1$ and $u\geq1$,
\begin{equation}\label{eq:shiu-total}
 \sum_{m\leq u}w_z(m)\ll u\rho_z(u).
\end{equation}
If $u\geq q^4$, then uniformly for every $b\pmod q$,
\begin{equation}\label{eq:shiu-progression}
 \sum_{\substack{m\leq u\\m\equiv b\pmod q}}w_z(m)
 \ll\frac uq\rho_z(u).
\end{equation}
\end{lem}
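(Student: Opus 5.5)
Both bounds should follow from standard mean-value estimates for non-negative multiplicative functions: the Hall--Tenenbaum (or Halberstam--Richert) bound for \eqref{eq:shiu-total}, and Shiu's Brun--Titchmarsh theorem for multiplicative functions for \eqref{eq:shiu-progression}. The function $w_z$ satisfies every hypothesis with absolute constants, uniformly in $z\in[0,1]$, because $0\leq w_z(p^k)\leq1$.

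For \eqref{eq:shiu-total}, the bound $\sum_{m\leq u}w_z(m)\leq u$ is trivial, so it suffices to treat $u\geq T$. Here I would use
\[
 \sum_{m\leq u}g(m)\ll\frac{u}{\log u}\exp\Big(\sum_{p\leq u}\frac{g(p)}p\Big),
 \qquad 0\leq g(p^k)\leq1 .
\]
With $g=w_z$, Mertens' theorem gives
\[
 \sum_{p\leq u}\frac{w_z(p)}p=\log\log T+z(\log\log u-\log\log T)+O(1).
\]
The exponential is therefore $\asymp \log T\,(\log u/\log T)^{z}$. This yields $u(\log T/\log u)^{1-z}\asymp u\rho_z(u)$. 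The $2$'s in the definition \eqref{eq:rho-def} of $\rho_z$ only absorb small values of $u$ and $T$.

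For \eqref{eq:shiu-progression}, I first reduce to a reduced residue class. Let $d=(b,q)$. Every $m\equiv b\pmod q$ has the form $m=dm'$, where $m'\equiv b/d\pmod{q/d}$ and $(b/d,q/d)=1$. Since $\omega_T(dm')\geq\omega_T(m')$ and $z\leq1$, we have $w_z(dm')\leq w_z(m')$, so the sum is at most
\[
 \sum_{\substack{m'\leq u/d\\ m'\equiv b/d\ (\mathrm{mod}\ q/d)}}w_z(m').
\]
Put $q'=q/d$. Because $u\geq q^4$, we have $q'\leq (u/d)^{1/3}$, say, and $\log(u/d)\asymp\log u$. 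Shiu's theorem then bounds the sum by
\[
 \ll \frac{u/d}{\phi(q')\log u}\exp\Big(\sum_{p\leq u,\,p\nmid q'}\frac{w_z(p)}p\Big).
\]
Every prime dividing $q'$ satisfies $p\leq q\leq 8L^3<T$, so $w_z(p)=1$ for these primes. Hence removing them from the prime sum multiplies the exponential by $\asymp\prod_{p\mid q'}(1-1/p)=\phi(q')/q'$. This exactly cancels the loss $q'/\phi(q')$ and leaves
\[
 \frac{u}{dq'}\cdot\frac{1}{\log u}\exp\Big(\sum_{p\leq u}\frac{w_z(p)}p\Big)\ll\frac uq\rho_z(u)
\]
by the computation above. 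When $u<T$ the same argument works, since then $\rho_z(u)=1$ and the exponential is $\asymp\log u$.

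The main point needing care is the uniformity in Shiu's theorem. The constants there depend only on the bound $g(p^k)\leq A^k$, the divisor-type bound $g(n)\ll_\epsilon n^\epsilon$, and the exponent $\beta$ in $q<u^{1-\beta}$. All three are uniform here: $A=1$, $w_z\leq1$, and $\beta=2/3$. A second subtlety is the non-coprime class $b$. It is handled by the monotonicity $w_z(dm')\leq w_z(m')$, and by the fact that all primes dividing $q$ lie below $T$, so the factor $q'/\phi(q')$ is neutralized rather than lost.
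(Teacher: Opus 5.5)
Your proposal is correct and follows essentially the same route as the paper: Shiu's theorem with Mertens' estimate for the total sum, and for the progression the reduction via $d=(b,q)$ to a reduced class modulo $q/d$, where the loss $(q/d)/\phi(q/d)$ is cancelled because $w_z(p)=1$ for every prime $p\mid q$ (as $q<T$). The differences are cosmetic. You use the inequality $w_z(dm')\leq w_z(m')$ where the paper notes equality. You also compare $\log(u/d)\asymp\log u$ directly rather than stating $\rho_z(u/d)\ll\rho_z(u)$.
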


\begin{proof}
Shiu's Brun--Titchmarsh theorem for non-negative multiplicative
functions \cite{Shiu1980}, combined with Mertens' estimate, gives
\[
 \sum_{m\leq u}w_z(m)
 \ll\frac{u}{\log(2u)}
   \exp\left(\sum_{p\leq u}\frac{w_z(p)}p\right)
 \ll u\rho_z(u).
\]
For small $u\ll T$ the same assertion follows trivially, since
$w_z(m)\leq1$ and $\rho_z(u)\asymp1$.
For the progression estimate, let $d=(b,q)$, $q_1=q/d$ and $u_1=u/d$.
Dividing the congruence by $d$ gives a reduced class modulo $q_1$.
Every prime dividing $d$ is below $T$, so $w_z(dm)=w_z(m)$.
Moreover $u\geq q^4$ ensures $q_1\leq u_1^{1/4}$.
Shiu's theorem, with the prime factors of $q_1$ omitted from its
Euler product, gives
\[
 \sum_{\substack{m\leq u_1\\m\equiv b/d\pmod{q_1}}}w_z(m)
 \ll\frac{u_1}{\phi(q_1)}\rho_z(u_1)
       \prod_{p\mid q_1}\left(1-\frac1p\right).
\]
Here $w_z(p)=1$ for $p\mid q_1$; the final product cancels
$q_1/\phi(q_1)$. Since $d\leq q\leq u^{1/4}$,
$\rho_z(u/d)\ll\rho_z(u)$ uniformly in $z$.
We obtain $u_1\rho_z(u_1)/q_1\ll u\rho_z(u)/q$.
\end{proof}

\begin{lem}\label{lem:ramare-mass}
With the notation above,
\begin{equation}\label{eq:ramare-mass}
 \int_0^1\sum_{j<J}
 \frac{\Delta_jM_j}{\log t_j}\rho_z(M_j)\,\d z
 \ll x.
\end{equation}
\end{lem}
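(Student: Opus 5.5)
Since $\Delta_j = t_{j+1}-t_j$, I will treat the sum over $j$ as a Riemann sum in $t$ and then integrate in $z$ first. Throughout, $M_j=x/t_j$ and $t_j\geq T$.

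\emph{Step 1: the Riemann sum.} For $t\in[t_j,t_{j+1}]$ we have $t_{j+1}/t_j=1+t_j^{-1/3}$. Hence $x/t$, $\log t$ and $\rho_z(x/t)$ each change by at most a bounded factor on the block, uniformly in $z$. For $\rho_z$ this holds because $\log(2x/t)$ changes by $O(t_j^{-1/3})$ and the exponent $1-z$ lies in $[0,1]$. Therefore
\[
 \sum_{j<J}\frac{\Delta_jM_j}{\log t_j}\rho_z(M_j)
 \ll x\int_T^{2y}\frac{\rho_z(x/t)}{t\log t}\,\d t .
\]

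\emph{Step 2: bounding $\rho_z$ and integrating in $z$.} Put $\lambda(t)=\log\frac{\log(2x/t)}{\log(2T)}$. Since $t\leq 2y=2x/q$, we get $x/t\geq q/2$. When $q$ is large enough that $\log(2x/t)\geq\log(2T)$, we have $\rho_z(x/t)=e^{-(1-z)\lambda(t)}$ with $\lambda(t)\geq0$. Then
\[
 \int_0^1\rho_z(x/t)\,\d z=\frac{1-e^{-\lambda(t)}}{\lambda(t)}\ll\frac1{1+\lambda(t)} .
\]
In the remaining range, where $x/t<T$, we simply use $\rho_z\leq1$. Throughout, $t\leq 2y\leq 2x/q\leq x$, so the whole computation reduces to bounding
\[
 \int_T^{x}\frac{\d t}{t\log t\,(1+\lambda(t))} .
\]
This is where the averaging in $z$ matters: without it, the integral of $1/(t\log t)$ would produce a factor $\log\log x$.

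\emph{Step 3: the main estimate.} Substitute $v=\log t$, which ranges over $[10\log L, L]$, so the integral becomes $\int \d v/(v(1+\lambda))$.
\begin{itemize}
 \item For $v\leq L/2$, we have $\log(2x/t)\asymp L$, so $\lambda\geq \log(L/\log T)-O(1)=\log(L/(10\log L))-O(1)$. The integral over this range is therefore
 \[
 \ll \frac{\log(L/\log T)}{1+\log(L/\log T)}\ll 1 .
 \]
 \item For $L/2<v\leq L$, we have $1/v\ll 1/L$. Set $w=L-v\approx\log(x/t)$, so that $\lambda\approx\log\frac{w+1}{\log T}$. When $w\leq\log T$ the contribution is $\ll(\log T)/L$. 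The rest is bounded by $L^{-1}\int_{\log T}^{L}\d w/(1+\log(w/\log T))\ll 1$.
\end{itemize}
The portion with $x/t<T$ has length $O(\log T)$ in $v$ near $L$, so it contributes $O(\log T/L)$. Multiplying back by $x$ gives the bound~\eqref{eq:ramare-mass}.

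I expect the only delicate point to be the uniformity of the Riemann-sum comparison in Step 1, together with checking that the two regimes of $\lambda$ cover the whole range; the rest is routine calculus.
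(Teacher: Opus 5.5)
Your argument is correct and is essentially the paper's proof. Both use the same Riemann-sum comparison with $x\int_T^{2y}\rho_z(x/t)\,\d t/(t\log t)$, the same $z$-integration giving $\ll\min\{1,1/\lambda\}$, and the same split at $t=\sqrt x$, where the $1/\log L$ gain cancels $\int_T^{\sqrt x}\d t/(t\log t)\ll\log L$. Your finer analysis of $\sqrt x<t\leq 2y$ is unnecessary, since $\rho_z\leq1$ and $\int_{\sqrt x}^{2y}\d t/(t\log t)=O(1)$ already suffice; also, the case split in Step~2 is really according to whether $x/t\geq T$, not the size of $q$.
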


\begin{proof}
Since $t_{j+1}/t_j=1+O(t_j^{-1/3})$, the left-hand side is
bounded by a constant times
\[
 x\int_0^1\int_T^{2y}\frac{\rho_z(x/t)}{t\log t}\,\d t\,\d z.
\]
For $u>T$, integration in $z$ gives
\[
 \int_0^1\rho_z(u)\,\d z
 \leq\min\left\{1,
   \frac1{\log\{\log(2u)/\log(2T)\}}\right\}
\]
whenever the second expression is defined with positive denominator.
For $T\leq t\leq\sqrt x$ we have $u=x/t\geq\sqrt x$;
the $z$-integral is $O(1/\log L)$, while
$\int_T^{\sqrt x}\d t/(t\log t)\ll\log L$.
For $\sqrt x\leq t\leq2y$ we use $\rho_z\leq1$ and
$\int_{\sqrt x}^{2y}\d t/(t\log t)=O(1)$.
Together these estimates prove \eqref{eq:ramare-mass}.
\end{proof}

\section{Multiplicative Fourier analysis and Gauss sums}\label{sec:gauss}

For every $j,z$ and every character $\chi\pmod q$ put
\begin{equation}\label{eq:Bj}
 B_{j,z}(\chi):=\sum_{r\in(\Z/q\Z)^\times}
    A_{j,z}(r)\overline{\chi(r)}.
\end{equation}
Fourier inversion, the triangle inequality, and Parseval give, for
any selected character $\chi_{j,z}$,
\begin{equation}\label{eq:fourier-l1}
 \sum_{r\in(\Z/q\Z)^\times}|A_{j,z}(r)|
 \leq |B_{j,z}(\chi_{j,z})|
   +\left(\sum_{\substack{\chi\pmod q\\\chi\ne\chi_{j,z}}}
                  |B_{j,z}(\chi)|^2\right)^{1/2}.
\end{equation}
The character may depend on both $j$ and $z$.

For $v\in\Z$, put
\begin{equation}\label{eq:Gauss-def}
 G_q(v,\chi):=\sum_{r\in(\Z/q\Z)^\times}\chi(r)\e(vr/q).
\end{equation}
In particular,
\begin{equation}\label{eq:Bj-Gauss}
 B_{j,z}(\chi)=\sum_{m\leq M_j}F(m)w_z(m)G_q(am,\ov\chi).
\end{equation}
\rev{We record three Gauss-sum estimates that will also be used to obtain the sharp constants.}

\begin{lem}\label{lem:gauss-l2}
For every $v\in\Z$,
\begin{equation}\label{eq:gauss-l2}
 \sum_{\chi\pmod q}|G_q(v,\chi)|^2=\phi(q)^2.
\end{equation}
\end{lem}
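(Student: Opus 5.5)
The identity \eqref{eq:gauss-l2} is Parseval's identity for the finite abelian group $(\Z/q\Z)^\times$, applied to the function $r\mapsto\e(vr/q)$ on the unit group. I will expand the square and use orthogonality of characters; no input from the earlier sections is needed.

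First, write
\[
 |G_q(v,\chi)|^2=\sum_{r,s\in(\Z/q\Z)^\times}\chi(r)\ov{\chi(s)}\,\e\bigl(v(r-s)/q\bigr),
\]
and sum over all $\phi(q)$ characters $\chi\pmod q$, exchanging the order of summation. Second, apply the orthogonality relation $\sum_{\chi}\chi(r)\ov{\chi(s)}=\phi(q)\,\mathbf 1_{r=s}$, valid for units $r,s$. This kills every off-diagonal pair. Third, on the diagonal the phase is $\e(0)=1$, so the total is $\phi(q)\cdot\#(\Z/q\Z)^\times=\phi(q)^2$.

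The argument is uniform in $v$, including $v\equiv0\pmod q$ and $v$ not coprime to $q$, because the unimodularity of the phase is the only property of $\e(vr/q)$ that is used.

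There is no real obstacle. The one point to check is that $r$ and $s$ run only over units, so that orthogonality applies in the form above and the diagonal has exactly $\phi(q)$ terms.
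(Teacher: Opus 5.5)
Your proof is correct and matches the paper's own argument: the paper proves the lemma in one line by expanding the square and applying orthogonality of characters on $(\Z/q\Z)^\times$, exactly as you do. Your remark that only the unimodularity of $\e(vr/q)$ is used, so the identity holds for every $v\in\Z$, is also right.
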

\begin{proof}
Expand the square and use orthogonality of characters on
$(\Z/q\Z)^\times$.
\end{proof}

\begin{lem}\label{lem:gauss-pointwise}
For every $v\in\Z$ and every character $\chi\pmod q$,
\begin{equation}\label{eq:gauss-pointwise}
 |G_q(v,\chi)|\leq\sqrt{q(v,q)}.
\end{equation}
\end{lem}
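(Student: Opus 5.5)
The approach is to reduce to primitive characters and then control the local factors one prime at a time through the Chinese remainder theorem. The bound is multiplicative in a suitable sense, so we work locally. Write $q=\prod p^{a_p}$ and, by the Chinese remainder theorem, $\chi=\prod\chi_p$ with $\chi_p$ modulo $p^{a_p}$. Splitting $r$ according to its residues modulo each $p^{a_p}$ gives
\[
 G_q(v,\chi)=\prod_{p^{a}\Vert q}\chi_p(\overline{q/p^{a}})\,G_{p^{a}}(v\,\overline{(q/p^{a})},\chi_p),
\]
with a unit twist in the frequency. Multiplying $v$ by a unit modulo $p^a$ changes neither $(v,p^a)$ nor $|G|$, and $\sqrt{q(v,q)}$ factors as $\prod_p\sqrt{p^a(v,p^a)}$. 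It therefore suffices to prove $|G_{p^a}(v,\chi)|\leq\sqrt{p^a(v,p^a)}$ for a prime power modulus.

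Fix $p^a$ and write $(v,p^a)=p^b$. If $b=a$, then $|G|=|\sum_r\chi(r)|\leq\phi(p^a)\leq p^a=\sqrt{p^a\cdot p^a}$, which is the required bound. Now suppose $b<a$ and put $v=p^bv'$ with $p\nmid v'$. After the substitution $r\mapsto r\overline{v'}$ we may take $v'=1$. Let $p^{c}$ be the conductor of $\chi$, with $\chi$ induced by the primitive character $\chi^*$ modulo $p^c$.

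Two cases arise, according as $c$ exceeds $a-b$ or not.

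\emph{Case $c>a-b$.} The additive character $\e(p^b r/p^a)$ has period $p^{a-b}$, whereas $\chi$ is not periodic modulo $p^{a-b}$. Summing over the cosets $r\equiv r_0\pmod{p^{a-b}}$ yields $\sum\chi(r)$ over a coset of the kernel of the reduction map $(\Z/p^a)^\times\to(\Z/p^{a-b})^\times$. This is a complete sum of a nontrivial character over a subgroup, hence $0$ (when $a-b\geq1$). So $G=0$.

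\emph{Case $c\leq a-b$.} Then $\chi$ factors through $(\Z/p^{a-b})^\times$. Each class modulo $p^{a-b}$ lifts in $p^b$ ways, so
\[
 G_{p^a}(p^b,\chi)=p^b\,G_{p^{a-b}}(1,\chi'),
\]
where $\chi'$ modulo $p^{a-b}$ is induced by $\chi^*$. It remains to show $|G_{p^{n}}(1,\chi')|\leq p^{n/2}$ with $n=a-b$; this gives $|G|\leq p^{b}p^{(a-b)/2}=\sqrt{p^a p^b}$, as needed.

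For the imprimitive sum the standard formula is
\[
 G_{p^n}(1,\chi')=\sum_{d\mid p^n}\mu(d)\,\chi^*(d)\,\overline{\chi^*}(p^n/(dp^c))\cdots .
\]
Equivalently, we argue directly. If $c=n$, then $\chi'$ is primitive and $|G|=p^{n/2}$ by the classical evaluation. If $c<n$, write $r=s+p^{n-1}t$ (when $n\geq2$); since $c\leq n-1$, the character is invariant under this shift, and $\sum_t\e(t/p)=0$, so $G=0$. If $c<n=1$, then $\chi$ is principal modulo $p$ and $|G|=|\mu(p)|=1\leq\sqrt p$.

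All cases thus give the required bound. The main care is in the imprimitive, non-coprime bookkeeping, especially $n=1$ with principal $\chi$, and in $b=a$; both are handled above.
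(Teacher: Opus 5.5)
Your proof is correct and follows the same route as the paper. You reduce to prime-power moduli by the Chinese remainder theorem and then use that, for a nonprincipal local character of conductor $p^c$, the sum vanishes unless $v_p(v)=a-c$, in which case its modulus is $p^{a-c/2}$; where the paper quotes these standard facts and the Ramanujan-sum formula for the principal character, you prove them directly and treat the principal character as the conductor-$1$ case.

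Two cosmetic points. First, the factor $\chi_p(\overline{q/p^a})$ in your CRT identity is spurious: the exact identity is $G_q(v,\chi)=\prod_{p^a\Vert q}G_{p^a}(v\,\overline{q/p^a},\chi_p)$, and the slip is harmless because the extra factor has modulus one. Second, the unfinished ``standard formula'' line should be deleted, since you never use it.
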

\begin{proof}
\rev{By the Chinese remainder theorem, it is enough to consider} $q=p^a$.
If the local character has conductor $p^b$, $1\leq b\leq a$,
then $G_{p^a}(v,\chi)$ vanishes unless $v_p(v)=a-b$;
in the nonzero case its modulus equals
$p^{a-b}p^{b/2}=p^{a-b/2}$.
For the principal character the sum is $c_{p^a}(v)$, whose
prime-power formula gives the same pointwise bound.
\end{proof}

\begin{lem}\label{lem:gauss-l1}
Uniformly in $\chi\pmod q$ and $(a,q)=1$,
\begin{equation}\label{eq:gauss-l1}
 \sum_{v\bmod q}|G_q(av,\chi)|
 \leq\frac4{\sqrt6}\phi(q)\sqrt q.
\end{equation}
\end{lem}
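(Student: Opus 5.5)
Since $(a,q)=1$, the map $v\mapsto av$ permutes $\Z/q\Z$, so the left side of \eqref{eq:gauss-l1} equals $\sum_{v\bmod q}|G_q(v,\chi)|$. The plan is to reduce to prime powers. By the Chinese remainder theorem, $\chi=\prod_{p^k\Vert q}\chi_p$, and $G_q(v,\chi)$ factors as a product of local sums $G_{p^k}(v\bar q_p,\chi_p)$ (up to unimodular twists by $\chi_p(\bar q_p)$), where $q_p=q/p^k$. As $v$ runs over $\Z/q\Z$, the tuple of local arguments runs over $\prod_p \Z/p^k\Z$. Hence the sum is the product over $p^k\Vert q$ of $\sum_{v\bmod p^k}|G_{p^k}(v,\chi_p)|$. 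Since $\phi(q)\sqrt q$ is also multiplicative, it suffices to show, for every prime power $p^k$ and every $\chi\pmod{p^k}$, that
\[
 R(p^k,\chi):=\frac{\sum_{v\bmod p^k}|G_{p^k}(v,\chi)|}{\phi(p^k)p^{k/2}}\le \lambda_p,
\]
with $\prod_p\lambda_p\le 4/\sqrt6$. We expect $\lambda_p\le1$ for $p\ge5$, and $\lambda_2\lambda_3=4/\sqrt6$.

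Next we compute $R$ exactly using the description in the proof of Lemma~\ref{lem:gauss-pointwise}. If $\chi$ has conductor $p^b$ with $b\ge1$, then $G$ is supported on $v$ with $v_p(v)=k-b$. There are $\phi(p^b)$ such residues, and each has $|G|=p^{k-b/2}$. So the sum is $\phi(p^b)p^{k-b/2}$, and
\[
 R=\frac{\phi(p^b)p^{k-b/2}}{\phi(p^k)p^{k/2}}=p^{(b-k)/2}\le 1.
\]
For the principal character, $G=c_{p^k}(v)$. This equals $\phi(p^k)$ if $p^k\mid v$, equals $-p^{k-1}$ if $v_p(v)=k-1$ (for $p-1$ residues), and vanishes otherwise. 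So the sum is $\phi(p^k)+(p-1)p^{k-1}=2\phi(p^k)$, and $R=2p^{-k/2}$. Thus
\[
 \lambda_p=\max\bigl(1,\,2/\sqrt p\bigr),
\]
with the maximum at $k=1$. This gives $\lambda_p=1$ for $p\ge5$, $\lambda_2=\sqrt2$ and $\lambda_3=2/\sqrt3$. The product $\sqrt2\cdot2/\sqrt3=2\sqrt2/\sqrt3=4/\sqrt6$ gives exactly the stated constant.

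The main point needing care is the CRT factorization step. One must check that the twist factors $\chi_p(\bar q_p)$ and the rescaling $v\mapsto v\bar q_p$ are unimodular and bijective, so they disappear under absolute values and under summation over $v$. One must also check the Ramanujan-sum values at $v_p(v)=k-1$ for $k\ge1$; when $k=1$ this means $v\not\equiv0$, which again gives $p-1$ classes with value $-1$.
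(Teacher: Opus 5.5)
Your proposal is correct and follows the paper's proof essentially step for step. You factor the $\ell^1$ sum over $p^k\Vert q$ by CRT, compute the local norm $\phi(p^k)p^{b/2}$ for conductor $p^b$ and $2\phi(p^k)$ for the principal character via Ramanujan sums, and note that only $p^k=2,3$ give local ratios above $1$, namely $\sqrt2$ and $2/\sqrt3$; your extra checks (the substitution $v\mapsto av$, and that the CRT rescalings are bijective with unimodular twists) make explicit what the paper leaves implicit.
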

\begin{proof}
The sum factors over prime powers. For a local conductor $p^b$,
$1\leq b\leq a$, precisely $\phi(p^b)$ classes have nonzero
Gauss sum, each of modulus $p^{a-b/2}$.
The local $\ell^1$ sum is thus $\phi(p^a)p^{b/2}$,
which is at most $\phi(p^a)p^{a/2}$.
For the principal character the Ramanujan-sum formula gives local
norm $2\phi(p^a)$; relative to $\phi(p^a)p^{a/2}$ it has factor
$2/p^{a/2}$. Only $p^a=2,3$ enlarge the product, and their
factors are $2/\sqrt2$ and $2/\sqrt3$.
\end{proof}

\subsection{The exceptional Fourier coefficient}

\begin{prop}\label{prop:exceptional}
If $M_j\geq q^4$, then, uniformly for $0\leq z\leq1$ and
all $\chi\pmod q$,
\begin{equation}\label{eq:exceptional-bound}
 |B_{j,z}(\chi)|\ll
 \frac{\phi(q)}{\sqrt q}\,M_j\rho_z(M_j).
\end{equation}
Consequently the total contribution of the selected coefficient
$|B_{j,z}(\chi_{j,z})|$ in \eqref{eq:ramare-master}, over the
blocks with $M_j\geq q^4$, is $O(x/\sqrt q)$.
\end{prop}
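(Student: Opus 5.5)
Starting from the representation \eqref{eq:Bj-Gauss}, I will sort $m\le M_j$ by residue class $v\pmod q$:
\[
 B_{j,z}(\chi)=\sum_{v\bmod q}G_q(av,\ov\chi)\sum_{\substack{m\leq M_j\\ m\equiv v\pmod q}}F(m)w_z(m).
\]
Then I bound $|F(m)|\le1$ and apply the triangle inequality. Because $M_j\ge q^4$, the progression estimate \eqref{eq:shiu-progression} of Lemma~\ref{lem:shiu-weight} shows that every inner sum is $\ll (M_j/q)\rho_z(M_j)$, uniformly in $v$ and $z$. It follows that
\[
 |B_{j,z}(\chi)|\ll \frac{M_j\rho_z(M_j)}{q}\sum_{v\bmod q}|G_q(av,\ov\chi)|.
\]
Lemma~\ref{lem:gauss-l1} bounds the $\ell^1$ norm of the Gauss sums by $\frac4{\sqrt6}\phi(q)\sqrt q$. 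Here we use $(a,q)=1$; the lemma is uniform in the character, so it covers $\ov\chi$. Combining the two gives $|B_{j,z}(\chi)|\ll \phi(q)M_j\rho_z(M_j)/\sqrt q$, which is \eqref{eq:exceptional-bound}. An important feature is that we never invoke the pointwise bound $\sqrt{q(v,q)}$ class by class. That route would cost a divisor-type factor such as $\sum_v\sqrt{(v,q)}$, whereas the $\ell^1$ lemma keeps the constant absolute and avoids any loss like $2^{\omega(q)/2}$.

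For the consequence, I substitute this bound for the selected term into \eqref{eq:ramare-master}, using \eqref{eq:fourier-l1}. A block with $M_j\ge q^4$ then contributes
\[
 \frac{\Delta_j}{\phi(q)\log t_j}\,|B_{j,z}(\chi_{j,z})|\ll \frac1{\sqrt q}\,\frac{\Delta_jM_j}{\log t_j}\rho_z(M_j).
\]
The factor $\phi(q)$ cancels exactly here. This is the reason it was important to keep $\phi(q)$, and not $q$, in the Brun--Titchmarsh denominator. The constant in the bound is uniform in $z$, so the choice $\chi=\chi_{j,z}$ may depend on $j$ and $z$ without affecting anything. I then integrate over $z\in[0,1]$ and sum over the blocks $j<J$ with $M_j\ge q^4$. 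Enlarging to all $j<J$ is harmless because the terms are nonnegative, and Lemma~\ref{lem:ramare-mass} then bounds the total by $\ll x$. This yields the contribution $O(x/\sqrt q)$.

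I expect the main obstacle to be checking that the progression bound really holds uniformly over all residue classes, including non-reduced ones. In that case the Gauss sum can be large, namely $p^{a-b/2}$ when $v_p(v)=a-b$, while the class itself is thinner. Lemma~\ref{lem:shiu-weight} was stated for every $b\pmod q$ with the common bound $(u/q)\rho_z(u)$, and I would rely on that. The remaining work is only bookkeeping: confirming that the condition $M_j\ge q^4$ matches the hypothesis $u\ge q^4$, and that $\rho_z(M_j)$ is the same quantity that appears in Lemma~\ref{lem:ramare-mass}.
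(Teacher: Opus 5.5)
Your proposal is correct and is essentially the paper's own proof. You group \eqref{eq:Bj-Gauss} by $m \bmod q$, bound each class (reduced or not) by \eqref{eq:shiu-progression}, apply the $\ell^1$ Gauss-sum bound \eqref{eq:gauss-l1}, and then insert the result into \eqref{eq:ramare-master}, where $\phi(q)$ cancels, before concluding with Lemma~\ref{lem:ramare-mass}.
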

\begin{proof}
Group \eqref{eq:Bj-Gauss} by $m\pmod q$.
Lemma~\ref{lem:shiu-weight} and \eqref{eq:gauss-l1} give
\[
 |B_{j,z}(\chi)|
 \ll\frac{M_j\rho_z(M_j)}q
         \sum_{v\bmod q}|G_q(av,\ov\chi)|
 \ll\frac{\phi(q)}{\sqrt q}M_j\rho_z(M_j).
\]
Insert this in \eqref{eq:ramare-master} and use
Lemma~\ref{lem:ramare-mass} to obtain the claimed total bound.
\end{proof}

\subsection{Applying the pretentious large sieve}

Put
\begin{equation}\label{eq:Dq-Hq}
 \mathcal D_q:=\{d\geq1:p\mid d\Rightarrow p\mid q\},\qquad
 H(q):=\prod_{p\mid q}(1-p^{-1/2})^{-1}.
\end{equation}
Let $\eta=1-1/\sqrt2$ and define, for $M\geq q^4$,
\begin{equation}\label{eq:mathfrakD}
 \mathfrak D(M,q):=
 \left(\frac{\log(q\log M)}{\log M}\right)^\eta
 \log\left(2+\frac{\log M}{\log(q\log M)}\right).
\end{equation}
We use the following direct specialization of
\cite[Theorem~1.9]{GranvilleHarperSoundararajan2019}.

\begin{lem}[Granville--Harper--Soundararajan]\label{lem:PLS}
Let $g\in\mathcal C(1)$ and $M\geq q^4$. There is a
character $\chi_0\pmod q$ such that
\begin{equation}\label{eq:PLS}
 \left(\sum_{\substack{\chi\pmod q\\\chi\ne\chi_0}}
   \left|\sum_{n\leq v}g(n)\chi(n)\right|^2\right)^{1/2}
 \ll v\mathfrak D(M,q)
\end{equation}
uniformly for $\sqrt M\leq v\leq M$.
If no exceptional character is needed, choose $\chi_0$ arbitrarily.
\end{lem}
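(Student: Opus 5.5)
The plan is to obtain Lemma~\ref{lem:PLS} as a direct specialization of \cite[Theorem~1.9]{GranvilleHarperSoundararajan2019}. That theorem takes a $1$-bounded $g\in\mathcal C(1)$, a modulus $q$ and a length $v$. It asserts that after removing a bounded number $k$ of exceptional characters, those $\chi\pmod q$ for which $g\chi$ pretends to be $n^{it}$ with $|t|$ small, the mean square $\sum_\chi |\sum_{n\leq v}g(n)\chi(n)|^2$ over the remaining characters is $\ll v^2$ times a saving. In the ``one exceptional character removed'' case, that saving is $(\log(q\log v)/\log v)^{2(1-1/\sqrt2)}$ up to the logarithmic factor.

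I would proceed as follows.

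First, I would state the GHS theorem in the form with $k=1$: there is a character $\chi_1$ such that for all $\chi\neq\chi_1$ the relevant pretentious distance is large. Summing their mean-value bound then gives
\[
\Bigl(\sum_{\chi\neq\chi_1}\Bigl|\sum_{n\leq v}g(n)\chi(n)\Bigr|^2\Bigr)^{1/2}
\ll v\Bigl(\frac{\log(q\log v)}{\log v}\Bigr)^{1-1/\sqrt2}\log\Bigl(2+\frac{\log v}{\log(q\log v)}\Bigr).
\]
The exponent $\eta=1-1/\sqrt2$ is exactly the GHS exponent once the largest character is removed. The hypothesis $M\geq q^4$, with $q\leq v^{1/2}$ whenever $v\geq\sqrt M$, places us in their admissible range $q\leq v^{1/2}$ or similar.

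Second, I would make the exceptional character independent of $v$ in the range $\sqrt M\leq v\leq M$. In GHS, the exceptional characters are defined by minimizing $\mathbb D(g,\chi n^{it};M)$ over $|t|\leq\log M$, a quantity attached to the scale $M$ rather than to $v$. Since $\log v\asymp\log M$, the distances at scale $v$ and scale $M$ differ by $O(1)$ by Mertens. So the character chosen at scale $M$ still serves at every $v\in[\sqrt M,M]$, at the cost of absolute constants.

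Third, I would replace $\mathfrak D(v,q)$ by $\mathfrak D(M,q)$. Because $\log v\in[\tfrac12\log M,\log M]$, the two expressions are comparable: $t\mapsto t^\eta\log(2+1/t)$ changes by at most a constant factor when its argument is halved.

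The main obstacle is the second step, namely extracting from \cite{GranvilleHarperSoundararajan2019} a single $\chi_0$ uniform over the whole $v$-range. If their theorem is only stated at a fixed $v$, I would instead apply it at $v=M$ to fix $\chi_0$. For smaller $v$, I would rederive the bound from their proof, which bounds each non-exceptional sum via the Halász-type distance at scale $M$. Monotonicity of the distance in the range, together with $\log v\asymp\log M$, then transfers the estimate.
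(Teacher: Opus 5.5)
Your proposal is correct and follows the paper, which gives no separate argument: it quotes the lemma as a direct specialization of \cite[Theorem~1.9]{GranvilleHarperSoundararajan2019} with one exceptional character, so that the exponent $1-1/\sqrt{J+1}$ becomes $\eta=1-1/\sqrt2$, and it takes the admissible modulus range (via $M\geq q^4$) and the uniformity over $\sqrt M\leq v\leq M$ with a single fixed $\chi_0$ as part of the cited statement. Your second and third steps are therefore supplementary; if you did need the fallback, the transfer from scale $M$ to scale $v$ would have to be made inside the $\ell^2$ large-sieve argument, where the distance enters only as a uniform bound over the non-exceptional characters, because bounding each non-exceptional sum separately and summing would lose a factor $\sqrt{\phi(q)}$.
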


\begin{prop}\label{prop:weighted-pls}
If $M_j\geq q^4$, then for each $z\in[0,1]$ there exists a character
$\chi_{j,z}\pmod q$ satisfying
\begin{equation}\label{eq:weighted-pls}
 \left(\sum_{\substack{\chi\pmod q\\\chi\ne\chi_{j,z}}}
  |B_{j,z}(\chi)|^2\right)^{1/2}
 \ll H(q)\bigl(\sqrt q\,M_j\mathfrak D(M_j,q)
     +\phi(q)M_j^{3/4}\bigr).
\end{equation}
\end{prop}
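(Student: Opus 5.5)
We need to bound $B_{j,z}(\chi)=\sum_{m\leq M_j}F(m)w_z(m)G_q(am,\ov\chi)$ in $\ell^2$ over all $\chi$ except one. The plan is to separate the part of $m$ supported on primes dividing $q$, which produces Gauss sums, from the part coprime to $q$, which produces a twisted character sum to which Lemma~\ref{lem:PLS} applies.

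\emph{Factoring $m$.} Write $m=dn$ with $d\in\mathcal D_q$ and $(n,q)=1$. Since $(d,n)=1$, multiplicativity gives $F(m)w_z(m)=F(d)w_z(d)\,F(n)w_z(n)$. Also $w_z(d)=1$, because every prime dividing $q$ is at most $T$. For $(n,q)=1$ the change of variables $r\mapsto n^{-1}r$ gives
\[
 G_q(adn,\ov\chi)=\chi(n)G_q(ad,\ov\chi).
\]
Hence
\[
 B_{j,z}(\chi)=\sum_{d\in\mathcal D_q,\,d\leq M_j}F(d)\,G_q(ad,\ov\chi)\,S_{j,z}(M_j/d,\chi),
 \qquad
 S_{j,z}(v,\chi):=\sum_{n\leq v}g_z(n)\chi(n),
\]
where $g_z:=Fw_z\mathbf 1_{(\cdot,q)=1}$.

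\emph{Checking the hypotheses of Lemma~\ref{lem:PLS}.} Away from $q$, $F$ is completely multiplicative, so $g_z$ is completely multiplicative with $g_z(p)=F(p)w_z(p)$ of modulus at most $1$. Its von Mangoldt coefficients are therefore $\Lambda_{g_z}(p^k)=g_z(p)^k\log p$, and $g_z\in\mathcal C(1)$.

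\emph{Choosing the exceptional character.} Apply Lemma~\ref{lem:PLS} with $M=M_j$ to obtain one character $\chi_{j,z}$ for which \eqref{eq:PLS} holds uniformly for $\sqrt{M_j}\leq v\leq M_j$. Take this $\chi_{j,z}$ as the selected character. This is why it depends on $z$ but not on $d$: the lemma supplies a single $\chi_0$ for the whole range of $v$.

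\emph{Minkowski over $d$.} By Minkowski's inequality,
\[
 \Big(\sum_{\chi\neq\chi_{j,z}}|B_{j,z}(\chi)|^2\Big)^{1/2}
 \leq\sum_{d}\max_\chi|G_q(ad,\ov\chi)|\Big(\sum_{\chi\neq\chi_{j,z}}|S_{j,z}(M_j/d,\chi)|^2\Big)^{1/2}.
\]
We split the $d$-sum at $\sqrt{M_j}$.

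\emph{Small $d$ ($d\leq\sqrt{M_j}$).} Lemma~\ref{lem:gauss-pointwise} gives $|G_q(ad,\ov\chi)|\leq\sqrt{q(d,q)}$, and \eqref{eq:PLS} with $v=M_j/d$ bounds the $\ell^2$ norm by $(M_j/d)\mathfrak D(M_j,q)$. So this range contributes
\[
 \sqrt q\,M_j\mathfrak D(M_j,q)\sum_{d\in\mathcal D_q}\frac{\sqrt{(d,q)}}{d}.
\]
The sum factors as $\prod_{p^\alpha\Vert q}\sum_k \sqrt{p^{\min(k,\alpha)}}\,p^{-k}$. Each local factor is at most $\sum_{k}p^{-k/2}=(1-p^{-1/2})^{-1}$, so the sum is at most $H(q)$.

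\emph{Large $d$ ($d>\sqrt{M_j}$).} Here we use trivial bounds: Lemma~\ref{lem:gauss-l2} gives $\sum_\chi|G_q(ad,\ov\chi)|^2\leq\phi(q)^2$, and $|S_{j,z}(M_j/d,\chi)|\leq M_j/d$. Applying Cauchy--Schwarz over $\chi$, or simply bounding the $\ell^2$ norm of $G_q$ times the sup of $S$, this range contributes at most
\[
 \phi(q)M_j\sum_{d\in\mathcal D_q,\,d>\sqrt{M_j}}\frac1d
 \leq\phi(q)M_j\cdot M_j^{-1/4}\sum_{d\in\mathcal D_q}d^{-1/2}
 =\phi(q)M_j^{3/4}H(q).
\]
The key point is to take the $\ell^2$ norm over $\chi$ before summing over $d$; this is the obstacle the argument must handle.

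\emph{Main obstacle.} The delicate step is making the exceptional character independent of $d$. This works because the twisted sums for different $d$ all involve the same $g_z$ and are only truncated at different lengths $v=M_j/d$, so the uniformity in $v$ in Lemma~\ref{lem:PLS} handles every small $d$ simultaneously. Any $d$ with $M_j/d<\sqrt{M_j}$ is absorbed into the trivial range above.
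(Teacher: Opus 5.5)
Your argument is the paper's proof: the factorization $m=dn$ with $d\in\mathcal D_q$, the twist $G_q(adn,\ov\chi)=\chi(n)G_q(ad,\ov\chi)$, and one exceptional character obtained from the uniformity in $v$ in Lemma~\ref{lem:PLS}. The remaining steps also match: Minkowski over $d$ split at $\sqrt{M_j}$, the pointwise Gauss bound with $\sum_{d\in\mathcal D_q}\sqrt{(d,q)}/d\leq H(q)$, and the Rankin tail combined with Lemma~\ref{lem:gauss-l2}.

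However, your check that $g_z\in\mathcal C(1)$ is wrong as written. The weight $w_z$ is multiplicative but \emph{not} completely multiplicative, since $w_z(p^k)=z$ for every $k\geq1$ when $p>T$. So for $p>T$ with $p\nmid q$ one has $g_z(p^k)=zF(p)^k$. This differs from $g_z(p)^k=z^kF(p)^k$ when $0<z<1$. Hence $g_z$ is not completely multiplicative, and your formula $\Lambda_{g_z}(p^k)=g_z(p)^k\log p$ is false.

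The hypothesis is still true, and you should verify it through the local Euler factor, as the paper does. For $p>T$ with $p\nmid q$,
\[
 1+z\sum_{k\geq1}\frac{F(p)^k}{p^{ks}}=\frac{1-(1-z)F(p)p^{-s}}{1-F(p)p^{-s}},
 \qquad\text{so}\qquad
 \Lambda_{g_z}(p^k)=F(p)^k\bigl(1-(1-z)^k\bigr)\log p .
\]
Since $0\leq 1-(1-z)^k\leq 1$, this gives $|\Lambda_{g_z}(p^k)|\leq\log p$. For $p\leq T$ with $p\nmid q$ one has $g_z(p^k)=F(p)^k$ and $\Lambda_{g_z}(p^k)=F(p)^k\log p$, and for $p\mid q$ the function vanishes. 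With this correction the rest of your proof goes through unchanged.
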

\begin{proof}
Every $m$ has a unique factorization $m=dn$ with
$d\in\mathcal D_q$ and $(n,q)=1$. Since $q<T$, we have
$w_z(dn)=w_z(n)$. Define
\[
 g_z(n):=F(n)w_z(n)\mathbf1_{(n,q)=1}.
\]
It is multiplicative and belongs to $\mathcal C(1)$: for $p\nmid q$
and $p>T$ its local generating function equals
\[
 1+z\sum_{k\geq1}\frac{F(p)^k}{p^{ks}}
 =\frac{1-(1-z)F(p)p^{-s}}{1-F(p)p^{-s}},
\]
so that
$\Lambda_{g_z}(p^k)=F(p)^k\{1-(1-z)^k\}\log p$;
for $p\leq T$ (away from $q$) the usual bound holds, and for
$p\mid q$ the function is zero. Thus $|\Lambda_{g_z}(n)|\leq\Lambda(n)$.

For $(n,q)=1$, the change of variables in the Gauss sum gives
$G_q(adn,\ov\chi)=\chi(n)G_q(ad,\ov\chi)$. Hence
\begin{equation}\label{eq:Bj-factor}
 B_{j,z}(\chi)=\sum_{\substack{d\leq M_j\\d\in\mathcal D_q}}
 F(d)G_q(ad,\ov\chi)
 \sum_{n\leq M_j/d}g_z(n)\chi(n).
\end{equation}
Choose $\chi_{j,z}$ from Lemma~\ref{lem:PLS} with $M=M_j$.
For $d\leq\sqrt{M_j}$, Minkowski's inequality, the pointwise
Gauss bound, and the large sieve give the contribution
\[
 \ll\sqrt q\,M_j\mathfrak D(M_j,q)
 \sum_{d\in\mathcal D_q}\frac{\sqrt{(d,q)}}d
 \leq H(q)\sqrt q\,M_j\mathfrak D(M_j,q).
\]
For $d>\sqrt{M_j}$, use the trivial inner-sum bound $M_j/d$.
\rev{Minkowski's inequality} and \eqref{eq:gauss-l2} give
\[
 \ll\phi(q)M_j
 \sum_{\substack{d>\sqrt{M_j}\\d\in\mathcal D_q}}\frac1d
 \leq H(q)\phi(q)M_j^{3/4}.
\]
This proves \eqref{eq:weighted-pls}.
\end{proof}

\begin{lem}\label{lem:short-M}
If $q\leq M_j<q^4$, then for every $z\in[0,1]$,
\begin{equation}\label{eq:short-M}
 \left(\sum_{\chi\pmod q}|B_{j,z}(\chi)|^2\right)^{1/2}
 \leq \sqrt{2\phi(q)}\,M_j.
\end{equation}
\end{lem}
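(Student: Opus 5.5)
The plan is to compute the left-hand side of \eqref{eq:short-M} exactly by multiplicative Parseval on $(\Z/q\Z)^\times$, and then bound the resulting additive $\ell^2$ mass of $A_{j,z}$ by completing to all residues modulo $q$. The estimate is elementary and does not need the pretentious machinery.

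\textbf{Step 1 (multiplicative Parseval).} By \eqref{eq:Bj}, $B_{j,z}(\chi)$ is the Fourier coefficient of $A_{j,z}$ on $(\Z/q\Z)^\times$. Orthogonality of characters modulo $q$ therefore gives
\[
 \sum_{\chi\pmod q}|B_{j,z}(\chi)|^2=\phi(q)\sum_{r\in(\Z/q\Z)^\times}|A_{j,z}(r)|^2 .
\]
It thus suffices to show that $\sum_{r}|A_{j,z}(r)|^2\leq 2M_j^2$. The trivial bound $|A_{j,z}(r)|\leq M_j$ only gives $\phi(q)M_j^2$, which loses a factor $\phi(q)$, so an averaging argument is needed.

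\textbf{Step 2 (complete the sum and use additive Parseval).} Since every term is non-negative, I enlarge the sum over reduced $r$ to all $b\bmod q$. Because $(a,q)=1$, the map $b\mapsto ab$ permutes the residues modulo $q$. Write $c_m:=F(m)w_z(m)$, so that $|c_m|\leq1$. Additive orthogonality modulo $q$ then gives
\[
 \sum_{b\bmod q}\Bigl|\sum_{m\leq M_j}c_m\e(amb/q)\Bigr|^2
 = q\sum_{v\bmod q}\Bigl|\sum_{\substack{m\leq M_j\\ m\equiv v\ (q)}}c_m\Bigr|^2 .
\]

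\textbf{Step 3 (Cauchy--Schwarz in each residue class).} Each class $v\bmod q$ contains at most $M_j/q+1\leq 2M_j/q$ integers $m\leq M_j$; this is where the hypothesis $M_j\geq q$ is used. Cauchy--Schwarz within each class gives
\[
 \Bigl|\sum_{m\equiv v}c_m\Bigr|^2\leq \frac{2M_j}{q}\sum_{m\equiv v}|c_m|^2 .
\]
Summing over $v$ bounds the right-hand side of Step 2 by $q\cdot(2M_j/q)\cdot M_j=2M_j^2$. Combining this with Step 1 gives $\sum_\chi|B_{j,z}(\chi)|^2\leq 2\phi(q)M_j^2$, and taking square roots yields \eqref{eq:short-M}.

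The only step requiring care is Step 3. There the class count $\lfloor M_j/q\rfloor+1$ must be bounded by $2M_j/q$, which relies on $M_j\geq q$. The upper bound $M_j<q^4$ plays no role in this lemma; it only indicates the regime of $j$ in which the bound will be applied.
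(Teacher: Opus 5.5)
Your proof is correct and follows essentially the paper's argument: multiplicative Parseval, enlargement to all additive frequencies with additive orthogonality, and the class count $M_j/q+1\leq 2M_j/q$ coming from $M_j\geq q$. Your Cauchy--Schwarz step inside each residue class makes explicit the bookkeeping the paper leaves implicit, and it yields exactly the constant $2$.
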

\begin{proof}
Multiplicative Parseval and the enlargement to all additive
frequencies give
\[
 \sum_\chi|B_{j,z}(\chi)|^2
 \leq\phi(q)q\sum_{b\bmod q}
 \left|\sum_{\substack{m\leq M_j\\m\equiv b\pmod q}}
                   F(m)w_z(m)\right|^2.
\]
Each residue class contains at most $M_j/q+1$ integers;
$|F(m)w_z(m)|\leq1$ and $M_j\geq q$, so the last sum is at
most $2\phi(q)M_j^2$.
\end{proof}

\section{Summation of the Ramar\'e blocks}\label{sec:summing}

We combine \eqref{eq:ramare-master} with the preceding estimates.
Recall the standard bounds
\begin{equation}\label{eq:rho-H-growth}
 \frac q{\phi(q)}\ll\log\log(3q),\qquad
 H(q)=\exp\left(O\left(\frac{\sqrt{\log(3q)}}
                                 {\log\log(3q)}\right)\right).
\end{equation}
In particular, for $q\leq8L^3$ the factors $q/\phi(q)$ and
$H(q)$ are $L^{o(1)}$.

For $M_j\geq q^4$, choose $\chi_{j,z}$ as in
Proposition~\ref{prop:weighted-pls}. By
\eqref{eq:fourier-l1}, Proposition~\ref{prop:exceptional}, and
Lemma~\ref{lem:ramare-mass}, the total contribution of the
exceptional coefficients is $O(x/\sqrt q)$.
The first term from \eqref{eq:weighted-pls} contributes
\begin{equation}\label{eq:ramare-I}
 \ll\frac{x}{\sqrt q}\frac q{\phi(q)}H(q)
 \int_T^{2x/q^4}\frac{\mathfrak D(x/t,q)}{t\log t}\,\d t.
\end{equation}
Indeed, the block mesh has relative width $O(t_j^{-1/3})$ and
allows comparison with the displayed integral, including at its last
block. Writing $u=\log(x/t)$, and recalling $\eta=1-1/\sqrt2$, we have
\[
 \begin{aligned}
 \int_T^{2x/q^4}\frac{\mathfrak D(x/t,q)}{t\log t}\,\d t
 &\ll (\log L)^{\eta+1}
   \int_{4\log q-\log2}^{L-10\log L}
                  \frac{u^{-\eta}}{L-u}\,\d u\\
 &\ll L^{-\eta}(\log L)^{\eta+2}.
 \end{aligned}
\]
Thus \eqref{eq:ramare-I} is
$\ll(x/\sqrt q)L^{-\eta+o(1)}$.
The second term in \eqref{eq:weighted-pls} contributes
\begin{equation}\label{eq:ramare-tail}
 \ll H(q)x^{3/4}
 \int_T^{2x/q^4}\frac{\d t}{t^{3/4}\log t}
 \ll \frac{H(q)x}{q\log(2q)}
 \ll\frac{x}{\sqrt q}.
\end{equation}
\rev{An empty integration range contributes zero.}

For $q\leq M_j<q^4$ no exceptional character is required.
Using Lemma~\ref{lem:short-M}, \eqref{eq:fourier-l1}, and
integrating over the corresponding prime blocks, we get
\[
 \ll\frac{x}{\sqrt{\phi(q)}}
 \int_{x/q^4}^{2x/q}\frac{\d t}{t\log t}
 \ll\frac{x\log(2q)}{L\sqrt{\phi(q)}}
 \ll\frac{x}{\sqrt q}.
\]
These bounds control the full Fourier term in
\eqref{eq:ramare-master} by $O(x/\sqrt q)$.
The remaining terms satisfy
$\Psi(x,T)+xT^{-1/3}+y/\log y\ll x/L$.
Consequently we have proved the following proposition.

\begin{prop}\label{prop:smooth-core}
Suppose that $q\leq8(\log x)^3$, $(a,q)=1$, and $F:\N\to\U$
is multiplicative and completely multiplicative at every prime
$p\nmid q$. Then
\begin{equation}\label{eq:smooth-core}
 \sum_{\substack{n\leq x\\P^+(n)\leq x/q}}
      F(n)\e(an/q)
 \ll\frac{x}{\log x}+\frac{x}{\sqrt q}.
\end{equation}
\end{prop}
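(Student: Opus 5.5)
The proposition assembles the estimates of Sections~\ref{sec:ramare}--\ref{sec:summing}. We start from the Ramar\'e decomposition \eqref{eq:ramare-decomp} and the master inequality \eqref{eq:ramare-master}. These bound $C(x)$ by an integral over $z$ of a weighted sum over prime blocks $j<J$ of $\ell^1$ norms $\sum_r|A_{j,z}(r)|$, plus error terms. The errors are controlled directly: \eqref{eq:ramare-exceptions} and Rankin's bound give $\Psi(x,T)\le x^{99/100+o(1)}$, $T=L^{10}$ gives $xT^{-1/3}=o(x/L)$, and $y/\log y=x/(q\log y)\ll x/L$.

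For the main term we split the blocks according to the size of $M_j=x/t_j$. Since $t_j\le y=x/q$, every block has $M_j\ge q$.

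\emph{Blocks with $M_j\ge q^4$.} For each $z$ we take the character $\chi_{j,z}$ from Proposition~\ref{prop:weighted-pls} and apply \eqref{eq:fourier-l1}.
\begin{itemize}
\item The selected coefficient is bounded by Proposition~\ref{prop:exceptional}. Together with Lemma~\ref{lem:ramare-mass}, the factor $\phi(q)^{-1}$ in \eqref{eq:ramare-master} cancels the $\phi(q)$ there, giving $O(x/\sqrt q)$.
\item The non-exceptional part contributes at most \eqref{eq:ramare-I} plus \eqref{eq:ramare-tail}. Here the sum over blocks is compared with $\int dt/(t\log t)$ because $\Delta_j\asymp$ the mesh width. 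The substitution $u=\log(x/t)$ shows the $\mathfrak D$-integral is $L^{-\eta}(\log L)^{O(1)}$. Since $q/\phi(q)$ and $H(q)$ are only $L^{o(1)}$ by \eqref{eq:rho-H-growth}, the power saving $L^{-\eta}$ absorbs them, and this piece is $o(x/\sqrt q)$.
\item The tail term $H(q)\phi(q)M_j^{3/4}$ sums to $H(q)x/(q\log 2q)$, which is $\ll x/\sqrt q$ because $H(q)\ll q^{1/2-\epsilon}$ trivially.
\end{itemize}

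\emph{Blocks with $q\le M_j<q^4$.} No exceptional character is needed. Cauchy--Schwarz via \eqref{eq:fourier-l1} and Lemma~\ref{lem:short-M} give $\sum_r|A_{j,z}(r)|\le\sqrt{2\phi(q)}M_j$. The weights $\Delta_j/(\phi(q)\log t_j)$ then sum to $(x/\sqrt{\phi(q)})\int_{x/q^4}^{2x/q}dt/(t\log t)\ll x\log q/(L\sqrt{\phi(q)})$. Since $q\le 8L^3$, we have $\log q\ll\log L$ and $\sqrt{q/\phi(q)}\ll\sqrt{\log\log q}$, so this is $\ll x/\sqrt q$ after comparing $\log L/L$ with $1/\sqrt q$. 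When $q$ is small one can simply use $\log q/L\le 1$ together with $\sqrt{q/\phi(q)}\ll (\log\log q)^{1/2}$. To obtain the clean $x/\sqrt q$ rather than losing the factor $\sqrt{\log\log q}$, I would note that the bound is in any case $\ll x/L$ whenever $q\ge(\log L)^4$, and otherwise $\log q/L$ is tiny. Either way the total is $\ll x/L+x/\sqrt q$.

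The main obstacle is this bookkeeping in the short range, together with verifying that the block-to-integral comparison is uniform in $z$; the integration over $z$ in Lemma~\ref{lem:ramare-mass} is what prevents a $\log L$ loss in the exceptional term. Adding all contributions yields \eqref{eq:smooth-core}.
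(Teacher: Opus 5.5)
Your proposal is correct and follows the paper's proof essentially step by step. It uses the same split at $M_j=q^4$, the same treatment of the selected coefficient via Proposition~\ref{prop:exceptional} and Lemma~\ref{lem:ramare-mass}, the same two terms of \eqref{eq:weighted-pls}, and Lemma~\ref{lem:short-M} for the blocks with $q\le M_j<q^4$. Your case analysis in that short range is more than needed: $\log(2q)/\sqrt{\phi(q)}\ll1$ holds for all $q$, which already gives $\ll x/L$ uniformly, and in fact $\log(2q)\sqrt{q/\phi(q)}/L=o(1)$ for $q\le 8L^3$, which gives $o(x/\sqrt q)$, so no $\sqrt{\log\log q}$ loss can occur.
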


\section{An auxiliary order-of-magnitude estimate}\label{sec:completion}

\begin{prop}\label{prop:basic-bound}
Under the hypotheses of Theorem~\ref{thm:main}, one has
\begin{equation}\label{eq:basic-bound}
 S_f(x,\alpha)\ll\frac{x}{\log x}+\frac{x}{\sqrt q}.
\end{equation}
\end{prop}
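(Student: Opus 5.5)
The plan is to assemble the pieces already in place. First, Proposition~\ref{prop:large-q} covers $L^{5/2}\leq q\leq Q_{\rm MV}(x)$, where the bound is in fact $O(x/L)$. So it suffices to treat $q\leq L^3$.

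In that range I will apply Proposition~\ref{prop:reduction}. Its hypothesis asks for the bound \eqref{eq:local-goal} whenever $X$ is large, $q\leq 8(\log X)^3$, $F$ is $1$-bounded, multiplicative and completely multiplicative at primes $p\nmid q$, and $|\theta-b/q|\leq 1/(qX)$ with $(b,q)=1$. Once this hypothesis is verified, the proposition transfers \eqref{eq:local-goal} to the original multiplicative $f$ for $q\leq(\log x)^3$, which is exactly \eqref{eq:basic-bound}.

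To verify the hypothesis, fix such $X,F,\theta,b$. Lemma~\ref{lem:phase} replaces $\theta$ by $b/q$ at cost $O(X/q)$. This is acceptable: for $q\leq \log X$ it is at most $X/\sqrt q$, and for larger $q$ it is at most $X/\log X$. I then split $S_F(X,b/q)$ according to whether $P^+(n)>X/q$ or $P^+(n)\leq X/q$. The first part is $O(X/\log X)$ by Proposition~\ref{prop:large-prime}, applied with $x=X$ and $a=b$; this is legitimate since $q\leq 8(\log X)^3$, which is precisely its range. The second part is $O(X/\log X+X/\sqrt q)$ by Proposition~\ref{prop:smooth-core}, under the same hypotheses. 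Adding the three bounds gives \eqref{eq:local-goal} uniformly, so Proposition~\ref{prop:reduction} yields the claim.

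The only point requiring care is consistency of ranges and uniformity. Propositions~\ref{prop:large-prime} and~\ref{prop:smooth-core} were proved with $L=\log x$ for the current length $x$, and I will apply them with $x$ replaced by $X=x/d$ inside Proposition~\ref{prop:reduction}. So I must check that their implied constants depend on nothing but the constraint $q\leq 8(\log X)^3$, and that "$X$ large" can be taken uniformly. Both facts follow from how those propositions were stated, so I expect no real obstacle beyond this bookkeeping. The case $q$ bounded is also included, since then $X/\sqrt q\gg X$ and the bound is trivial anyway.
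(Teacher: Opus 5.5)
Your proposal is correct and follows the paper's proof essentially verbatim. As in the paper, you verify the hypothesis of Proposition~\ref{prop:reduction} by splitting at $P^+(n)=X/q$ and invoking Propositions~\ref{prop:large-prime} and~\ref{prop:smooth-core} together with Lemma~\ref{lem:phase}, and you cover $L^{5/2}\leq q\leq Q_{\rm MV}(x)$ with Proposition~\ref{prop:large-q}; your case split for the phase error is unnecessary, since $X/q\leq X/\sqrt q$ for every $q\geq1$.
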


\begin{proof}
We first establish the local statement required in
Proposition~\ref{prop:reduction}. Moduli below a fixed absolute constant are
covered by $|S_F(x,\alpha)|\leq x\ll x/\sqrt q$, so we may assume that
$q$ is sufficiently large for the quoted smooth-number estimates. For
$q\leq8L^3$, split
$S_F(x,a/q)$ according as $P^{+}(n)$ is at most or greater than $x/q$. Proposition~\ref{prop:large-prime} and Proposition~\ref{prop:smooth-core} give
\begin{equation}\label{eq:local-finished}
 S_F(x,a/q)\ll \frac{x}{\log x}+\frac{x}{\sqrt q}.
\end{equation}
Lemma~\ref{lem:phase} extends this to $|\alpha-a/q|\leq1/(qx)$. Proposition~\ref{prop:reduction} then proves the same estimate for every multiplicative $f$ in the range $q\leq(\log x)^3$.

The preceding argument treats $q\leq(\log x)^3$. Proposition~\ref{prop:large-q} treats $q\geq(\log x)^{5/2}$ up to
$Q_{\rm MV}(x)$. The ranges overlap and exhaust all admissible $q$.
\end{proof}

\section{Proof of the sharp main theorem}\label{sec:sharp-general}

We now prove both estimates of Theorem~\ref{thm:main} through a single
argument. The bounds obtained from Ramar\'e's identity and Shiu's theorem are used only \rev{to control the boundary terms}
in the one-character reduction. After separating the factors supported on primes dividing $q$, the argument is common to all multiplicative functions; only the
last local estimate distinguishes complete multiplicativity.

For $|\beta|\leq1/(qx)$ and $t\in\mathbb R$, put
\begin{equation}\label{eq:sharp-I}
 \mathcal H_x(\beta,t):=\frac{1+it}{x^{1+it}}
                  \int_0^x\e(\beta v)v^{it}\,\d v,
 \qquad \mathcal H_x(0,t)=1.
\end{equation}

Throughout this section, $q\leq L^3$, $(a,q)=1$, and
$\alpha=a/q+\beta$ with $|\beta|\leq1/(qx)$. Recall $\mathcal D_q$ and
$H(q)$ from \eqref{eq:Dq-Hq}. For a character $\chi\pmod q$, write
\[
 M_\chi(Y):=\sum_{n\leq Y}f(n)\chi(n).
\]

\subsection{Gauss sums at primes dividing \texorpdfstring{$q$}{q}}

Every positive integer has a unique factorization $n=dm$, where
$d\in\mathcal D_q$ and $(m,q)=1$. Since $(d,m)=1$, ordinary
multiplicativity gives $f(n)=f(d)f(m)$. Fourier inversion on
$(\Z/q\Z)^\times$ therefore gives
\begin{equation}\label{eq:general-primary-exact}
 S_f(X,a/q)
 =\frac1{\phi(q)}\sum_{\chi\bmod q}
   \sum_{\substack{d\leq X\\d\in\mathcal D_q}}
   f(d)G_q(ad,\ov\chi)M_\chi(X/d).
\end{equation}
The following local estimate is the source of the leading constant.

\begin{lem}[Local Gauss-sum bounds]
\label{lem:general-primary-kernel}
Uniformly in $\chi\pmod q$ and $(a,q)=1$,
\begin{equation}\label{eq:general-primary-kernel}
 \sum_{d\in\mathcal D_q}\frac{|G_q(ad,\ov\chi)|}{d}
 \leq\frac4{\sqrt6}\sqrt q.
\end{equation}
For each fixed $0<\rho<1/2$,
\begin{equation}\label{eq:general-primary-weighted-kernel}
 \sum_{d\in\mathcal D_q}\frac{|G_q(ad,\ov\chi)|}{d^{1-\rho}}
 \ll_\rho q^{1/2+o(1)}.
\end{equation}
If $f$ is completely multiplicative and $|f(n)|\leq1$, then, for
all real $t$,
\begin{equation}\label{eq:complete-primary-kernel}
 \left|\sum_{d\in\mathcal D_q}
 \frac{f(d)G_q(ad,\ov\chi)}{d^{1+it}}\right|
 \leq\sqrt q.
\end{equation}
\end{lem}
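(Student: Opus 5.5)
The plan is to factor everything over the prime powers $p^a\Vert q$ by the Chinese remainder theorem and then compute each local factor explicitly.

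\textbf{Factorization.} Write $q=\prod p^{a_p}$, $\chi=\prod\chi_p$ with $\chi_p$ of conductor $p^{b_p}$, and $d=\prod_{p\mid q}p^{k_p}\in\mathcal D_q$. By CRT, $G_q(v,\ov\chi)$ is a product of local sums $G_{p^{a}}(u_pv,\ov\chi_p)$ times a unimodular constant, where $u_p$ is a unit modulo $p^a$. For the prime $p$, the part $d/p^{k_p}$ is a unit modulo $p^{a}$. The change of variables $r\mapsto r\,(d/p^{k_p})^{-1}$ pulls it out as a factor $\chi_p(d/p^{k_p})$ of modulus one. Hence
\[
|G_q(ad,\ov\chi)|=\prod_{p\mid q}|G_{p^{a_p}}(p^{k_p},\ov\chi_p)|,
\]
and the sums in \eqref{eq:general-primary-kernel} and \eqref{eq:general-primary-weighted-kernel} factor as $\prod_p\sum_{k\ge0}|G_{p^a}(p^k,\ov\chi_p)|p^{-k(1-\rho)}$, with $\rho=0$ for the first. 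In the completely multiplicative case $f(d)d^{-it}$ also factors. The extra unimodular factors $\chi_{p'}(p^{k})$ from the other primes are $k$-th powers of fixed unimodular numbers, so each local factor has the shape $\sum_k z^kp^{k}\,G_{p^a}(p^k,\ov\chi_p)$ with $z=f(p)\xi p^{-1-it}$ for some $|\xi|=1$. In particular $|z|\le1/p$.

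\textbf{Local values.} I will use the facts from the proof of Lemma~\ref{lem:gauss-pointwise}.
\begin{itemize}
\item If $b\ge1$, then $G_{p^a}(p^k,\cdot)$ vanishes unless $k=a-b$, where it has modulus $p^{a-b/2}$. This includes $k\ge a$, where it vanishes for nonprincipal characters.
\item If $b=0$, the sum is the Ramanujan sum. It equals $0$ for $k\le a-2$, equals $-p^{a-1}$ for $k=a-1$, and equals $\phi(p^a)$ for $k\ge a$.
\end{itemize}

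\textbf{Bound \eqref{eq:general-primary-kernel}.} For $b\ge1$ the local factor is $p^{a-b/2}/p^{a-b}=p^{b/2}\le p^{a/2}$. For $b=0$ it is
\[
1+\phi(p^a)\sum_{k\ge a}p^{-k}=2 .
\]
The ratio $2/p^{a/2}$ exceeds $1$ only for $p^a=2$ and $p^a=3$, with values $\sqrt2$ and $2/\sqrt3$. The product of all local factors is therefore at most $\sqrt q\cdot\sqrt2\cdot 2/\sqrt3=(4/\sqrt6)\sqrt q$.

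\textbf{Bound \eqref{eq:general-primary-weighted-kernel}.} For $b\ge1$ the local factor is
\[
p^{b/2+(a-b)\rho}\le p^{a/2},
\]
using $\rho<1/2$. For $b=0$ it is
\[
p^{(a-1)\rho}+\phi(p^a)\sum_{k\ge a}p^{-k(1-\rho)}\ll_\rho p^{a\rho}\le C_\rho p^{a/2}.
\]
The product is then at most $C_\rho^{\omega(q)}\sqrt q=q^{1/2+o(1)}$.

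\textbf{Bound \eqref{eq:complete-primary-kernel}.} For $b\ge1$ there is a single term, of modulus at most $p^{b/2}\le p^{a/2}$. For $b=0$, set $w=pz$, so that $|w|\le1$. The local factor is the geometric expression
\[
-p^{a-1}z^{a-1}+\phi(p^a)\frac{z^a}{1-z}=w^{a-1}\,\frac{w-1}{1-w/p}.
\]
It remains to show that its modulus is at most $p^{a/2}$. Since $|w|^{a-1}\le1$, it suffices to bound $|1-w|/|1-w/p|$ on the closed unit disk. By the maximum principle the maximum is attained on $|w|=1$. This Möbius map sends the unit circle, which is symmetric about $\mathbb R$, to a circle symmetric about $\mathbb R$. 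Hence the extreme moduli occur at $w=\pm1$, giving the values $0$ and $2p/(p+1)$. Finally $2p/(p+1)\le\sqrt p\le p^{a/2}$ holds for every prime $p$: at $p=2$ it reads $4/3\le\sqrt2$, and at $p=3$ it reads $3/2\le\sqrt3$. Multiplying the local bounds gives $\sqrt q$.

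\textbf{Main obstacle.} The delicate step is this last one for $p=2,3$. There the crude bound $2p/(p-1)$ exceeds $\sqrt p$, so the argument genuinely needs the cancellation between the $k=a-1$ term and the geometric tail, which is exactly what complete multiplicativity supplies.
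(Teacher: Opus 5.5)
Your proposal is correct and follows the paper's proof essentially step for step. You use the same CRT factorization with the cross factors $\chi^{(p)}(p)^k$, the same local Gauss and Ramanujan values giving local norms $p^{b/2}\le p^{a/2}$ and $2$, and the same geometric identity $w^{a-1}(w-1)/(1-w/p)$ bounded by $2p/(p+1)\le\sqrt p$. You also supply a justification for that last inequality (maximum principle plus the real-symmetric M\"obius image), which the paper merely asserts.

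One slip to fix: the displayed local shape should read $\sum_k z^kG_{p^a}(p^k,\ov\chi_p)$, without the extra factor $p^k$. That is the form your later computation actually uses.
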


\begin{proof}
\rev{We prove the estimates prime by prime.} Factor $q=\prod_{p^e\Vert q}p^e$ and let
$\chi_p$ denote the component of $\chi$ modulo $p^e$.
The Chinese remainder theorem gives a constant $\omega(a,\chi)$ of
modulus one for which
\begin{equation}\label{eq:primary-CRT-factor}
 \sum_{d\in\mathcal D_q}\frac{f(d)G_q(ad,\ov\chi)}{d^{1+it}}
 =\omega(a,\chi)
 \prod_{p^e\Vert q}\left(\sum_{v\geq0}
  \frac{f(p^v)\chi^{(p)}(p)^v}{p^{v(1+it)}}
  G_{p^e}(p^v,\ov{\chi_p})\right),
\end{equation}
where $\chi^{(p)}=\prod_{\substack{r^b\Vert q\\r\ne p}}\chi_r$;
thus $|\chi^{(p)}(p)|=1$. More explicitly, the CRT expression for
$G_q$ has local additive coefficient $ad(q/p^e)^{-1}$; its unit part
can be removed from the local Gauss sum and contributes precisely the
character factors displayed in \eqref{eq:primary-CRT-factor}.
\rev{The factorization also holds after taking absolute values.}

For a local character of conductor $p^j$ with $1\leq j\leq e$,
$G_{p^e}(p^v,\ov{\chi_p})$ vanishes unless $v=e-j$, in which case
its modulus is $p^{e-j/2}$. The corresponding local contribution
to \eqref{eq:general-primary-kernel} is $p^{j/2}\leq p^{e/2}$.
For a principal local character the relevant Ramanujan sums are
\[
 c_{p^e}(p^{e-1})=-p^{e-1},\qquad
 c_{p^e}(p^v)=\phi(p^e)\quad(v\geq e).
\]
Consequently the local absolute norm equals $2$.
Only the factors $p^e=2,3$ have $2>p^{e/2}$, and their relative
contributions multiply to $4/\sqrt6$.
This proves \eqref{eq:general-primary-kernel}. Replacing $d^{-1}$ by
$d^{-1+\rho}$ gives local norms $p^{j/2+\rho(e-j)}$ in the
nonprincipal case and $O_\rho(p^{\rho e})$ in the principal case.
Their product is $q^{1/2+o(1)}$, proving the weighted bound.

Suppose now that $f$ is completely multiplicative. Set
\[
 z_p=f(p)p^{-it}\chi^{(p)}(p),\qquad |z_p|\leq1.
\]
The local factor in \eqref{eq:primary-CRT-factor} is
$\sum_{v\geq0}z_p^v p^{-v}G_{p^e}(p^v,\ov{\chi_p})$.
For conductor $p^j$, $j\geq1$, this has modulus at most
$p^{j/2}|z_p|^{e-j}\leq p^{e/2}$.
For the principal character the complete geometric sum is
\begin{equation}\label{eq:principal-kernel-contraction}
 -z_p^{e-1}+\frac{(1-p^{-1})z_p^e}{1-z_p/p}
 =-z_p^{e-1}\frac{1-z_p}{1-z_p/p}.
\end{equation}
For $|z|\leq1$,
\[
 \frac{|1-z|}{|1-z/p|}\leq\frac{2p}{p+1}
 \leq\sqrt p\leq p^{e/2}.
\]
Thus every local factor is at most $p^{e/2}$; multiplying over
$p^e\Vert q$ proves \eqref{eq:complete-primary-kernel}.
The empty product for $q=1$ is understood as $1$.
\end{proof}

We shall truncate the variable $d\in\mathcal D_q$ at a fixed power of $L$. For
any fixed $0<\rho<1/2$ and $D\geq2$, Rankin's trick gives
\begin{equation}\label{eq:general-primary-tail}
 \sum_{\substack{d>D\\d\in\mathcal D_q}}\frac1d
 \leq D^{-\rho}
 \prod_{p\mid q}(1-p^{-(1-\rho)})^{-1}
 =D^{-\rho}L^{o(1)}.
\end{equation}
The last estimate is uniform for $q\leq L^3$. In addition,
\eqref{eq:general-primary-weighted-kernel} gives
\begin{equation}\label{eq:general-kernel-tail}
 \sum_{\substack{d>D\\d\in\mathcal D_q}}
 \frac{|G_q(ad,\ov\chi)|}{d}
 \ll_\rho D^{-\rho}q^{1/2+o(1)}.
\end{equation}

\subsection{Two uniform character estimates}

\rev{The first estimate is a form of the pretentious large sieve with at most one exceptional character, applied to the part of $f$ coprime to $q$.}

\begin{lem}
\label{lem:general-level-q-sieve}
Fix $A>0$. There is a set $\mathcal E_q$ of characters modulo $q$, with
$|\mathcal E_q|\leq1$, such that, uniformly for
$x/L^A\leq Y\leq x$,
\begin{equation}\label{eq:general-level-q-sieve}
 \left(\sum_{\substack{\chi\bmod q\\\chi\notin\mathcal E_q}}
 |M_\chi(Y)|^2\right)^{1/2}
 \ll_A YL^{-(1-1/\sqrt2)+o(1)}.
\end{equation}
\end{lem}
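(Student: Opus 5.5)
We would deduce this from Lemma~\ref{lem:PLS} applied to a single function in $\mathcal C(1)$ built from $f$, after reducing $f$ to a function that is completely multiplicative away from $q$. Since $\chi(n)=0$ when $(n,q)>1$, we have $M_\chi(Y)=\sum_{n\leq Y,(n,q)=1}f(n)\chi(n)$, so only the values of $f$ at primes $p\nmid q$ matter. We write $f\mathbf 1_{(\cdot,q)=1}=h*g$, where $g(n):=F_q(n)\mathbf 1_{(n,q)=1}$ and $h$ is the function from Lemma~\ref{lem:conv}. On the support of $h$ all integers are coprime to $q$, and $F_q=f$ at $p\mid q$ does not enter. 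The function $g$ is completely multiplicative on integers coprime to $q$ with $|g(p)|\leq1$. Hence $\Lambda_g(p^k)=g(p)^k\log p$, and so $g\in\mathcal C(1)$.

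\textbf{Step 1 (large sieve for $g$).} Take $M=x$; then $M\geq q^4$ since $q\leq L^3$. Lemma~\ref{lem:PLS} gives a character $\chi_0$ with
\[
\Bigl(\sum_{\chi\neq\chi_0}\Bigl|\sum_{n\leq v}g(n)\chi(n)\Bigr|^2\Bigr)^{1/2}\ll v\,\mathfrak D(x,q)
\]
for $\sqrt x\leq v\leq x$. We set $\mathcal E_q=\{\chi_0\}$. Because $\log(q\log x)\asymp\log L$, we have $\mathfrak D(x,q)=L^{-\eta}(\log L)^{O(1)}=L^{-(1-1/\sqrt2)+o(1)}$. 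The set $\mathcal E_q$ does not depend on $Y$, and this is the point of taking $M=x$ once for the whole range.

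\textbf{Step 2 (convolution).} Using $\chi(dn)=\chi(d)\chi(n)$, we write
\[
M_\chi(Y)=\sum_{d\leq Y}h(d)\chi(d)\sum_{n\leq Y/d}g(n)\chi(n).
\]
We apply Minkowski's inequality over $\chi\notin\mathcal E_q$. For $d\leq Y/\sqrt x$, the inner length $Y/d$ lies in $[\sqrt x,x]$, so Step 1 bounds the contribution by $\sum_d|h(d)|(Y/d)\mathfrak D\ll Y\mathfrak D$, using \eqref{eq:h-sums}. For $d>Y/\sqrt x\geq\sqrt x/L^A$ we bound each inner sum trivially by $Y/d$. This costs $\sqrt{\phi(q)}\,Y\sum_{d>\sqrt x L^{-A}}|h(d)|/d$. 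With $\delta=1/4$ in \eqref{eq:h-sums} this is $\ll L^{3/2}Y x^{-1/8+o(1)}$, which is negligible. Combining the two ranges gives \eqref{eq:general-level-q-sieve}, uniformly in $x/L^A\leq Y\leq x$.

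\textbf{Main obstacle.} The delicate point is checking that the specialization in Lemma~\ref{lem:PLS} really is uniform in $v\in[\sqrt M,M]$ with one fixed exceptional character. It is also important that $\mathfrak D$ gives exactly the exponent $\eta=1-1/\sqrt2$ up to $L^{o(1)}$ when $q\leq L^3$. The crude $\log\log$ factors are absorbed into $L^{o(1)}$, and the dependence on $A$ enters only through the tail range $d>\sqrt x/L^A$.
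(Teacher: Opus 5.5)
Your proposal is correct and follows essentially the paper's argument: you apply the Granville--Harper--Soundararajan large sieve with $J=1$ to the completely multiplicative part (your $g$ is exactly the paper's $f^\flat$), then transfer to $f\mathbf 1_{(\cdot,q)=1}=h*g$ by Minkowski's inequality, bounding the long convolution variables trivially. The only difference is bookkeeping: you invoke Lemma~\ref{lem:PLS} once with $M=x$ and split at $d=Y/\sqrt x$, which gives a power-saving tail, whereas the paper works from a base point $x/L^{2A+20}$ and splits at $k=L^R$, with a polylogarithmic tail.
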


\begin{proof}
Define a completely multiplicative function $f^\flat$ by
$f^\flat(p)=f(p)$ for $p\nmid q$ and $f^\flat(p)=0$ for $p\mid q$.
There is a multiplicative function $h$, supported on powerful integers
coprime to $q$, such that
\begin{equation}\label{eq:general-powerful-convolution}
 f(n)\mathbf1_{(n,q)=1}=(h*f^\flat)(n),
 \qquad
 \sum_{n\geq1}\frac{|h(n)|}{n^{1-\rho}}\ll_\rho1
 \quad(0<\rho<1/2).
\end{equation}
Indeed, $h(p)=0$ and
$h(p^\nu)=f(p^\nu)-f(p)f(p^{\nu-1})$ for $p\nmid q$ and $\nu\geq2$;
the assertion follows from the same Euler-factor calculation as in
Lemma~\ref{lem:conv}.

Apply \cite[Theorem~1.9]{GranvilleHarperSoundararajan2019} with $J=1$
to $f^\flat$, using the base point $x/L^{2A+20}$. Since $q\leq L^3$,
the modulus condition is automatic, and the uniform interval in that
theorem contains every $Y/k$ with $x/L^A\leq Y\leq x$ and
$k\leq L^R$, for each fixed $R$. \rev{The uniformity in the length gives a set $\mathcal E_q$ of at
most one character for which} the analogue of
\eqref{eq:general-level-q-sieve} holds for the means of $f^\flat$,
uniformly throughout the displayed interval.

Expanding \eqref{eq:general-powerful-convolution} and applying Minkowski's
inequality transfers this estimate to $f$. For $k\leq L^R$, with $R$
fixed sufficiently large in terms of $A$, the same exceptional character
works at every length $Y/k$, and the total cost is
$\sum |h(k)|/k\ll1$. For $k>L^R$, the trivial character $L^2$ estimate
and \eqref{eq:general-powerful-convolution} give
\[
 \ll \sqrt q\,Y\sum_{k>L^R}\frac{|h(k)|}{k}
 \ll YL^{3/2-R\rho+o(1)}.
\]
Choose $R$ so that this is smaller than the right-hand side of
\eqref{eq:general-level-q-sieve}. This proves the lemma.
\end{proof}

The second estimate uses a prime-divisor weight. \rev{This avoids a geometric prime-power identity, so the argument also works for general multiplicative functions.} For $x/L^A\leq X\leq x$, write
\begin{equation}\label{eq:general-prime-weight}
 Z_A:=(qL^{A+20})^8,
 \qquad
 w_X(n):=\sum_{\substack{Z_A<p\leq X/Z_A\\p\mid n}}\log p.
\end{equation}

\begin{lem}
\label{lem:general-weighted-tail}
Fix $A>0$. There is a set $\mathcal E$ of characters modulo $q$, with
$|\mathcal E|=L^{o(1)}$, such that, uniformly for
$x/L^A\leq X\leq x$ and $X/L^A\leq Y\leq X$,
\begin{equation}\label{eq:general-weighted-tail}
 \frac1L\sum_{\substack{\chi\bmod q\\\chi\notin\mathcal E}}
 \left|\sum_{n\leq Y}f(n)\chi(n)w_X(n)\right|
 \ll_A Y\left(\frac{\log Z_A}{L}\right)^{2/3}.
\end{equation}
\end{lem}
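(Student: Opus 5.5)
Open the weight: since $w_X(n)=\sum_{p\mid n,\,Z_A<p\le X/Z_A}\log p$, I would write
\[
 \sum_{n\leq Y}f(n)\chi(n)w_X(n)=\sum_{Z_A<p\leq X/Z_A}\log p\sum_{\substack{n\leq Y\\ p\mid n}}f(n)\chi(n).
\]
Next write $n=p^\nu m$ with $p\nmid m$, so that $f(n)=f(p^\nu)f(m)$. The terms with $\nu\geq2$ contribute at most $\sum_{p>Z_A}Y(\log p)/p^2\ll Y/Z_A$ for each $\chi$. Summing this over all $\phi(q)\le L^3$ characters and dividing by $L$ leaves something negligible. The $p\nmid m$ condition can be removed at a similar cost, $Y\sum_p(\log p)/p^2$, which is also harmless.

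This leaves a bilinear form $\sum_{p}\log p\, f(p)\chi(p)\,M'_\chi(Y/p)$, where $M'_\chi$ is the character mean of $f$ (made coprime to $q$ by means of \eqref{eq:general-powerful-convolution}). I would split $p$ dyadically into ranges $P<p\le 2P$ with $Z_A<P\le X/Z_A$. In each range I would apply Cauchy--Schwarz over $\chi$ and $p$ together:
\[
 \sum_{\chi\notin\mathcal E}\Bigl|\sum_{P<p\le 2P}\log p\,f(p)\chi(p)M_\chi(Y/p)\Bigr|
 \le\Bigl(\sum_\chi\Bigl|\sum_p a_p\chi(p)\Bigr|^2\Bigr)^{1/2}\Bigl(\sum_\chi\sup_{Y/2P\le v\le Y/P}|M_\chi(v)|^2\Bigr)^{1/2}.
\]
The prime factor is not literally of this shape, since $M_\chi$ depends on $p$. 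To handle that I would first freeze $Y/p$ on short blocks, exactly as in \eqref{eq:ramare-freezing}, and then apply Cauchy--Schwarz block by block. The first factor is bounded by orthogonality (a large sieve modulo $q$ with primes exceeding $q$), which gives $\ll(\phi(q)(P/q+1)P)^{1/2}$. For the second factor, the pretentious large sieve (Lemma~\ref{lem:PLS}, applied at lengths $Y/P\ge Z_A$, which is at least $q^4$ and suitably large relative to $L$) removes all but $O(1)$ characters per dyadic scale and saves $\mathfrak D$. I would define $\mathcal E$ as the union of these exceptional characters over the $O(L)$ dyadic scales, or more economically over $O(\log L)$ groups of scales. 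The union over scales explains why $|\mathcal E|$ is only claimed to be $L^{o(1)}$ rather than $\le 1$.

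Putting these together, each dyadic range contributes roughly $Y\,\mathfrak D(Y/P,q)$, and summing over $P$ gives about $Y\sum_P(\log\log(Y/P)\cdots)$. Dividing by $L$, the saving $(\log(q\log M)/\log M)^{\eta}$, integrated over $\log P$ between $\log Z_A$ and $L-\log Z_A$, yields a bound of shape $Y\,(\log Z_A/L)^{\eta}$ up to logarithms. Here $\eta\approx0.29<2/3$, so this alone is not sufficient.

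The main obstacle is therefore reaching the exponent $2/3$. I would attempt this by using \cite[Theorem~1.9]{GranvilleHarperSoundararajan2019} with $J$ larger than $1$: allowing $J$ exceptional characters improves the exponent to $1-1/\sqrt{J+1}$, and $J=8$ gives exponent $2/3$. Taking $J$ fixed and combining over the $O(\log L)$ scale groups still leaves $|\mathcal E|=L^{o(1)}$. The integral $\int (\log Z_A/u)^{2/3}\,du/L$ over $u\in[\log Z_A,L]$ then gives $(\log Z_A/L)^{2/3}$, up to a factor of $\log$ which can be absorbed by choosing $J$ slightly larger. Uniformity in $X$ and $Y$ follows because all lengths $Y/p$ lie in a window $[x/L^{2A+O(1)}/P,\,x/P]$, which is covered by the uniform interval of that theorem.
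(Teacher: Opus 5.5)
Your plan follows the paper's proof closely. You open $w_X$, discard $p^2\mid n$, and reduce to $\sum_p(\log p)f(p)\chi(p)M_\chi(Y/p)$. You then split the primes dyadically and into blocks of length $h\asymp P^{2/3}$ on which $M_\chi(Y/p)$ is frozen, and apply Cauchy--Schwarz over $\chi$ block by block. The means are controlled by \cite[Theorem~1.9]{GranvilleHarperSoundararajan2019} with a fixed $J$ for which $\eta_J:=1-1/\sqrt{J+1}>2/3$, at $O(\log L)$ geometric base points, so that $|\mathcal E|=O(J\log L)=L^{o(1)}$; the paper takes $J=100$ and relaxes the exponent to $4/5$. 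The step that fails as you justify it is the bound for the prime factor, and the whole count depends on it.

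Orthogonality, i.e.\ the large sieve inequality for the single modulus $q$, gives for a block $\mathcal I$ of length $h>q$ (or a whole dyadic range, $h=P$)
\[
 \sum_{\chi\bmod q}\Bigl|\sum_{p\in\mathcal I}(\log p)f(p)\chi(p)\Bigr|^2
 \leq\phi(q)\Bigl(\frac hq+1\Bigr)\sum_{p\in\mathcal I}(\log p)^2
 \asymp\frac{\phi(q)}{q}\,h^2\log P .
\]
This is because $\sum_{p\in\mathcal I}(\log p)^2\asymp h\log P$, not $h$. The extra factor $\sqrt{\log P}$ in the $\ell^2$ norm makes each dyadic range contribute roughly $Y\sqrt{\log P}\,\mathfrak D_J(Y/P,q)$ rather than $Y\mathfrak D_J(Y/P,q)$. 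There are $\asymp L$ ranges with $\log P\asymp\log(Y/P)\asymp L$, so after dividing by $L$ you get a bound of size about $YL^{1/2}(\log(qL)/L)^{\eta_J}$. Its power of $L$ exceeds $-1/2$ for every $J$, so no choice of $J$ reaches $(\log Z_A/L)^{2/3}$.

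What is needed is Brun--Titchmarsh in short intervals. Since $h\gg Z_A^{2/3}$ is a large power of $q$, one has $\sum_{p\in\mathcal I,\,p\equiv b\bmod q}\log p\ll h/\phi(q)$. Then, with $P_\chi(\mathcal I)=\sum_{p\in\mathcal I}(\log p)f(p)\chi(p)$, bound the class sums by their maximum instead of by Cauchy--Schwarz inside each class:
\[
 \sum_{\chi}|P_\chi(\mathcal I)|^2
 =\phi(q)\sum_{(b,q)=1}\Bigl|\sum_{\substack{p\in\mathcal I\\ p\equiv b\bmod q}}(\log p)f(p)\Bigr|^2
 \leq\phi(q)\max_{b}\Bigl(\sum_{\substack{p\in\mathcal I\\ p\equiv b\bmod q}}\log p\Bigr)\sum_{p\in\mathcal I}\log p\ll h^2 .
\]
This is \eqref{eq:sharp-prime-second}. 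Your displayed bound is actually too strong by a factor $q/\phi(q)$: if $f(p)=\ov{\chi_1(p)}$, the term $\chi=\chi_1$ alone is $\gg h^2$. That factor is only $L^{o(1)}$ and is harmless. With this correction the rest of your argument goes through as in the paper.

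One further point: \eqref{eq:general-powerful-convolution} is needed because the GHS theorem requires $\mathcal C(1)$, not to make $f$ coprime to $q$, which the character already does. You prove the sieve for $f^\flat$ and transfer it to $M_\chi$, truncating at $k\leq L^R$ so that the same $\mathcal E$ serves all lengths $u/k$.
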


\begin{proof}
\rev{We give the argument with varying cutoffs because the exceptional set must work uniformly for both $X$ and $Y$.} By \eqref{eq:general-powerful-convolution}
and Minkowski's inequality, the Granville--Harper--Soundararajan large
sieve for $f^\flat$ transfers to the means $M_\chi(u)$, with the
same exceptional set at each length. Fix $R=40$ and separate the convolution variables at $k=L^R$.
\rev{The tail has the following $\ell^2$-norm over characters:}
$\ll \sqrt q\,u\sum_{k>L^R}|h(k)|/k
 \ll uL^{3/2-R/4+o(1)}$, which is negligible for the present
purpose. For $k\leq L^R$, apply
\cite[Theorem~1.9]{GranvilleHarperSoundararajan2019} with $J=100$
at geometric base points $U_0,U_0^2,U_0^4,\ldots$, where
$U_0=Z_A/L^{2A+R+2}$. With the enlarged value of $Z_A$ above,
$q\leq U_0^{1/4}$; the available uniform length intervals cover
all the required $u/k$. There are $O_A(\log L)$ relevant points,
and their exceptional sets have total size $L^{o(1)}$.
After relaxing the fixed large-sieve exponent,
\begin{equation}\label{eq:weighted-uniform-sieve}
 \left(\sum_{\chi\notin\mathcal E}|M_\chi(u)|^2\right)^{1/2}
 \ll_A u\left(\frac{\log(qL)}{\log u}\right)^{4/5}
\end{equation}
for $Z_A/L^{2A}\ll u\leq x/Z_A$.
The convolution factor supported on powerful numbers has a power-saving tail,
so it only changes the implied constant.

By ordinary multiplicativity, after removing multiples of $p^2$ with
$p>Z_A$,
\begin{equation}\label{eq:general-weighted-factorization}
 \sum_{n\leq Y}f(n)\chi(n)w_X(n)
 =\sum_{Z_A<p\leq X/Z_A}(\log p)f(p)\chi(p)
                  M_\chi(Y/p)
 +O\left(\frac{Y\log Z_A}{Z_A}\right).
\end{equation}
The error, summed over all $\chi\pmod q$, is negligible in
\eqref{eq:general-weighted-tail} since $Z_A$ is a sufficiently
large fixed power of $qL$.

Partition the primes into dyadic intervals $D<p\leq2D$, and each
such interval into pieces $\mathcal I=(V,V+h]$ of length
$h\asymp D^{2/3}$. Weighted Brun--Titchmarsh gives
$\sum_{p\in\mathcal I,\,p\equiv b\pmod q}\log p\ll h/\phi(q)$.
For $P_\chi(\mathcal I)=\sum_{p\in\mathcal I}(\log p)f(p)\chi(p)$,
character orthogonality and a second application of the same bound yield
\begin{equation}\label{eq:sharp-prime-second}
 \sum_{\chi\pmod q}|P_\chi(\mathcal I)|^2\ll h^2.
\end{equation}
Freezing $M_\chi(Y/p)$ at $M_\chi(Y/V)$, Cauchy--Schwarz,
\eqref{eq:weighted-uniform-sieve}, and \eqref{eq:sharp-prime-second}
bound the frozen contribution of one piece by
\[
 \ll_A\frac{hY}{D}
 \left(\frac{\log(qL)}{\log(Y/D)}\right)^{4/5}.
\]
There are $O(D/h)$ pieces in a dyadic block. The freezing error
in one piece is at most
$q h(1+Yh/D^2)$ after summing over $\chi$, \rev{and hence over the whole dyadic block is}
\begin{equation}\label{eq:sharp-moving-error}
 \ll q(D+YD^{-1/3}).
\end{equation}
Indeed, the interval between $Y/p$ and $Y/V$ has length
$O(Yh/D^2)$, while the sum of the prime weights in a piece is $O(h)$.

Summing the frozen contributions over dyadic $D$ and dividing by
$L=\log x$, a comparison with the integral of $u^{-4/5}$ gives
\[
 \frac1L\sum_DY
 \left(\frac{\log(qL)}{\log(Y/D)}\right)^{4/5}
 \ll_A Y\left(\frac{\log(qL)}L\right)^{4/5}.
\]
The errors from \eqref{eq:sharp-moving-error}, after summation over
$D$ and division by $L$, are
\[
 \ll\frac{q}{L}\left(\frac X{Z_A}+YZ_A^{-1/3}\right)
 =o\left(Y\left(\frac{\log Z_A}{L}\right)^{2/3}\right).
\]
Because $4/5>2/3$, the required bound follows.
\end{proof}

\begin{lem}\label{lem:weighted-large-prime-boundary}
Let $Z$ be any fixed sufficiently large power of $qL$, with
$Z\ll L^{O(1)}$, and let $x/L^A\leq X\leq x$. Uniformly in
$(a,q)=1$,
\begin{equation}\label{eq:weighted-large-prime-boundary}
 \sum_{X/Z<p\leq X}f(p)\log p
     \sum_{m\leq X/p}f(m)\e(apm/q)
 \ll_A X+\frac{X2^{\omega(q)/2}}{\sqrt{\phi(q)}}\log(2Z).
\end{equation}
\end{lem}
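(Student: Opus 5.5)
We follow the model of Proposition~\ref{prop:large-prime}, now with the weight $\log p$ and with the lengths $X/p<Z$ in place of $x/p<q$. Since $Z\ll L^{O(1)}$, every prime $p>X/Z$ exceeds $\sqrt X$. The inner sum runs over $m\leq X/p<Z$, and it depends on $p$ only through $k=\lfloor X/p\rfloor$ and through the residue $p\bmod q$. For $1\leq k<Z$ put $I_k=(X/(k+1),X/k]\cap(X/Z,X]$ and
\[
 A_k(b):=\sum_{m\leq k}f(m)\e(abm/q),
\]
so that the left side of \eqref{eq:weighted-large-prime-boundary} equals $\sum_{k<Z}\sum_{p\in I_k}f(p)(\log p)A_k(p)$. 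Primes $p\mid q$ number $\omega(q)$ and contribute $O(\omega(q)\log X\cdot Z)$. Because $X\geq x/L^A$ and both $Z$ and $q$ are powers of $L$, this is $o(X)$.

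For the reduced classes, group the primes by $b\bmod q$ and apply the weighted short-interval Brun--Titchmarsh inequality. The interval $I_k$ has length $\asymp X/k^2\geq X/Z^2=X^{1-o(1)}$, which is much larger than $q$, so
\[
 \sum_{\substack{p\in I_k\\ p\equiv b\ (q)}}\log p\ll \frac{X}{\phi(q)k^2}.
\]
Cauchy--Schwarz then bounds the $k$-block by $X k^{-2}\sqrt{\mathcal E_q(k)}$, where $\mathcal E_q(k)$ is the reduced-class energy of Proposition~\ref{prop:large-prime}. Unlike that proposition, there is no $1/\log x$ to spare here, since the weight $\log p$ absorbs it. The target is therefore a bound on $\sum_{k<Z}\sqrt{\mathcal E_q(k)}/k^2$ by $O(1)+O(2^{\omega(q)/2}\phi(q)^{-1/2}\log(2Z))$.

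To get this, expand $\mathcal E_q(k)=\phi(q)^{-1}\sum_{m,n\leq k}f(m)\ov{f(n)}c_q(m-n)$. The diagonal gives $k$. Off the diagonal, use the evaluation \eqref{eq:ram-support}, or more crudely $|c_q(h)|\leq (h,q)$ summed over $1\leq h<k$. A short divisor-sum computation gives $\sum_{h<k}|c_q(h)|\ll k\,2^{\omega(q)}$, since $\sum_{h<k}\sum_{d\mid (h,q)}\mu^2(d)\phi(d)\cdot(\text{stuff})$ reduces to counting squarefree divisors. This yields
\[
 \mathcal E_q(k)\ll k+\frac{k^2 2^{\omega(q)}}{\phi(q)}.
\]
Consequently
\[
 \sum_{k<Z}\frac{\sqrt{\mathcal E_q(k)}}{k^2}\ll\sum_k k^{-3/2}+\frac{2^{\omega(q)/2}}{\sqrt{\phi(q)}}\sum_{k<Z}\frac1k\ll 1+\frac{2^{\omega(q)/2}}{\sqrt{\phi(q)}}\log(2Z).
\]
Multiplying by $X$ gives exactly \eqref{eq:weighted-large-prime-boundary}.

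The main obstacle is the off-diagonal Ramanujan estimate: we need the bound $\sum_{h<k}|c_q(h)|\ll k2^{\omega(q)}$ uniformly in $k$, including $k<q$. We would derive it from $|c_q(h)|\leq\sum_{d\mid (h,q)}d\,\mu^2(q/d)$-type bounds, or directly from \eqref{eq:ram-support}: writing $h=(q/\rad q)u$, we have $\sum_{u\leq U}\phi((u,\rad q))\leq U\tau(\rad q)=U2^{\omega(q)}$, and multiplying by $\phi(q)/\phi(\rad q)$ recovers the bound. If this fails for small $k$, fall back on the trivial estimate $\mathcal E_q(k)\leq k^2$, which still suffices. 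A secondary point is checking that the weighted Brun--Titchmarsh inequality is uniform over the partial intervals at $k$ near $Z$, which holds because $X/Z^2\geq X^{1-o(1)}$.
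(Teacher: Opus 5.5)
Your proof is correct and follows the paper's argument: weighted Brun--Titchmarsh in reduced classes, Cauchy--Schwarz against the reduced-class energy, and the bound $\sum_{1\leq h<k}|c_q(h)|\leq k2^{\omega(q)}$ from $c_q(h)=\sum_{d\mid(q,h)}d\mu(q/d)$. That bound holds uniformly in $k$, so your fallback is never needed. The only difference is cosmetic: you group primes exactly by $k=\lfloor X/p\rfloor$ as in Proposition~\ref{prop:large-prime}, whereas the paper uses blocks $(u_j,u_{j+1}]$ of width $\asymp\sqrt{u_j}$ in $u=X/p$ and pays an $O(X)$ freezing error. Two small points: no prime $p\mid q$ exceeds $X/Z$, so that case is vacuous, and for the truncated block near $k=Z$ you may simply bound the prime count by that of the full interval $(X/(k+1),X/k]$.
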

\begin{proof}
Partition $1\leq u\leq Z$ into intervals $(u_j,u_{j+1}]$
with $u_{j+1}-u_j\asymp\sqrt{u_j}$, and group primes according
to $X/u_{j+1}<p\leq X/u_j$. Put
$V_j(b)=\sum_{m\leq u_j}f(m)\e(abm/q)$.
Replacing the inner cutoff $X/p$ by $u_j$ costs at most
\[
 \ll X\sum_j\frac{(u_{j+1}-u_j)^2}{u_j^2}=O(X).
\]
Weighted Brun--Titchmarsh bounds the contribution of one prime
block by
\[
 \ll\frac{X(u_{j+1}-u_j)}{u_j^2\phi(q)}
         \sum_{(b,q)=1}|V_j(b)|.
\]
By Cauchy--Schwarz and the Ramanujan identity
$c_q(h)=\sum_{d\mid(q,h)}d\mu(q/d)$,
\[
 \left(\sum_{(b,q)=1}|V_j(b)|\right)^2
 \leq\phi(q)u_j\sum_{|h|\leq u_j}|c_q(h)|,
 \qquad
 \sum_{|h|\leq T}|c_q(h)|\ll\phi(q)+T2^{\omega(q)}.
\]
Consequently the sum of the prime blocks is at most
\[
 \ll X\sum_j\frac{u_{j+1}-u_j}{u_j^{3/2}}
     +\frac{X2^{\omega(q)/2}}{\sqrt{\phi(q)}}
           \sum_j\frac{u_{j+1}-u_j}{u_j}
 \ll X+\frac{X2^{\omega(q)/2}}{\sqrt{\phi(q)}}\log(2Z).
\]
Combining with the freezing error proves the lemma.
\end{proof}

\subsection{Reduction to a single character}

\begin{prop}[Reduction to a single character]
\label{prop:general-one-character}
Fix $A>0$. \rev{There is a function $\delta_A(x)\to0$ and a choice of at most one
exceptional character $\chi_0\pmod q$ such that, uniformly for} $x/L^A\leq X\leq x$,
\begin{equation}\label{eq:general-one-character}
\begin{aligned}
 S_f(X,a/q)
 ={}&\frac1{\phi(q)}
 \sum_{d\in\mathcal D_q}f(d)G_q(ad,\ov{\chi_0})M_{\chi_0}(X/d)\\
 &\quad+O_A\left(\frac XL\right)
 +\delta_A(x)\frac X{\sqrt q}.
\end{aligned}
\end{equation}
If there is no exceptional character, the first term is omitted.
\end{prop}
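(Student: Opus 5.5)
The natural starting point is the exact identity \eqref{eq:general-primary-exact}. The work is to show that every character except at most one contributes $O_A(X/L)+o(X/\sqrt q)$. Throughout I fix $\rho=1/4$ and truncate the variable $d\in\mathcal D_q$ at $D=L^B$, with $B$ large in terms of $A$.

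\emph{Step 1: truncating $d$.} For $d>D$ I use $|M_\chi(X/d)|\leq X/d$. Cauchy--Schwarz over $\chi$ together with Lemma~\ref{lem:gauss-l2} gives $\sum_\chi|G_q(ad,\ov\chi)|\leq\phi(q)^{3/2}$. By \eqref{eq:general-primary-tail} the tail is then $\ll\sqrt q\,XD^{-\rho}L^{o(1)}=O(X/L)$, since $q\leq L^3$. For the same reason, the term with $\chi_0$ and $d>D$, which is part of the main term in \eqref{eq:general-one-character}, differs from its truncation by an admissible amount, using \eqref{eq:general-kernel-tail}. It therefore suffices to treat $d\leq D$, so that $X/d\geq x/L^{A+B}$.

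\emph{Step 2: the non-exceptional characters with small $q$.} Take $\chi_0$ to be the element (if any) of the set $\mathcal E_q$ from Lemma~\ref{lem:general-level-q-sieve}, applied with $A+B$ in place of $A$. For $\chi\notin\mathcal E_q$, I use Cauchy--Schwarz over $\chi$, the pointwise bound $|G_q(ad,\ov\chi)|\leq\sqrt{q(d,q)}$ from Lemma~\ref{lem:gauss-pointwise}, and $\sum_{d\in\mathcal D_q}\sqrt{(d,q)}/d\leq H(q)$. This bounds their total contribution by
\[
 \sqrt{q/\phi(q)}\,H(q)\,XL^{-\eta+o(1)},\qquad \eta=1-1/\sqrt2 .
\]
This is $o(X/\sqrt q)$ as long as $q\leq L^{2\eta-\varepsilon}$. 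For larger $q$ a saving of more than a power $L^{\eta}$ is needed, and this is the main obstacle.

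\emph{Step 3: larger $q$, via the prime-divisor weight.} For the remaining range I plan to first insert $\log n$ by partial summation, so that $M_\chi(Y)$ is replaced by $L^{-1}\sum_{n\leq Y}f(n)\chi(n)\log n$ up to $O(Y/L)$. Then I split $\log n=\sum_{p^k\Vert n}k\log p$ into three ranges: primes $p\leq Z_A$, the weight $w_X(n)$ for $Z_A<p\leq X/Z_A$, and primes $p>X/Z_A$.

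\begin{itemize}
\item \emph{Middle range.} Lemma~\ref{lem:general-weighted-tail} bounds the $\ell^1$ sum over $\chi\notin\mathcal E$ by $Y(\log Z_A/L)^{2/3}$. Combined with the pointwise Gauss bound divided by $\phi(q)$, this gives
\[
 (X/\sqrt q)\,(q/\phi(q))\,H(q)\,L^{-2/3+o(1)}=o(X/\sqrt q).
\]
\item \emph{Characters in $\mathcal E\setminus\mathcal E_q$.} There are only $L^{o(1)}$ of these. For them I use Cauchy--Schwarz with Lemma~\ref{lem:general-level-q-sieve}, which costs $\sqrt{|\mathcal E|}\,XL^{-\eta}\sqrt q/\phi(q)=o(X/\sqrt q)$.
\item \emph{Primes $p>X/Z_A$.} Here I undo the character expansion and return to the additive sum. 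Lemma~\ref{lem:weighted-large-prime-boundary} gives $O(X+X2^{\omega(q)/2}\log Z/\sqrt{\phi(q)})$. After dividing by $L$ this is $O(X/L)+o(X/\sqrt q)$, because $2^{\omega(q)/2}=L^{o(1)}$ and $\log Z\ll\log L$.
\item \emph{Primes $p\leq Z_A$.} I also return to the additive side. Writing $n=p^km$ gives sums $S_f(X/p^k,ap^k/q)$, which are controlled by Proposition~\ref{prop:basic-bound} after reducing the fraction. The total weight is $\sum_{p^k\leq Z_A}\log p/p^k\ll\log Z_A$, so after dividing by $L$ this range contributes $(\log Z_A/L)\,L^{o(1)}(X/L+X/\sqrt q)$.
\end{itemize}

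The delicate point in the last item is the primes $p\mid q$, where the denominator drops to $q/(q,p^k)$. For these I would keep the $d$-factorization and bound each factor $p$ through \eqref{eq:general-primary-kernel}, which avoids any loss.

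Assembling Steps 1--3 with $\delta_A(x)\to0$ collecting the $o(1)$ factors should give \eqref{eq:general-one-character}. The point I expect to be hardest is making the three weight ranges of Step 3 fit together, so that the exceptional sets and the boundary terms are all controlled uniformly in $X$ and in $Y=X/d$.
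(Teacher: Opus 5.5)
Your toolkit matches the paper's: the weight $w_X$, Lemma~\ref{lem:general-weighted-tail} off $\mathcal E$, Lemma~\ref{lem:general-level-q-sieve} on $\mathcal E\setminus\mathcal E_q$, and Lemma~\ref{lem:weighted-large-prime-boundary} with Proposition~\ref{prop:basic-bound} for the boundary ranges. The gap is the order in which you use them, and it matters in Step~3 as soon as $q\gg L$, which includes the critical scale $q\asymp L^2$.

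You insert $\log n$ inside each $M_\chi(Y)$ \emph{after} the Fourier expansion. So every character treated this way carries its own error $E_\chi(Y)=L^{-1}\sum_{n\leq Y}f(n)\chi(n)\log(x/n)$. Since $\log(x/Y)=\log(x/X)+\log d$, this error is of size up to $Y\log L/L$, not $O(Y/L)$, and you never say how these errors are summed. Bounding them with $|G_q(ad,\ov\chi)|\leq\sqrt{q(d,q)}$ over $\phi(q)$ characters gives at least $\sqrt q\,X/L$, which is of order $X$ when $q\asymp L^2$. Cancellation must therefore be extracted. One way is to reassemble the errors over \emph{all} characters into the additive sum $L^{-1}\sum_{n\leq X}f(n)\e(an/q)\{\log(x/n)+\log d_n\}$, where $d_n$ is the $\mathcal D_q$-part of $n$, and then bound it with Proposition~\ref{prop:basic-bound}. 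This $\log d_n$ term is where the primes $p\mid q$ actually enter; your $\log m$ with $(m,q)=1$ contains none of them. The same objection applies to ``undoing the character expansion'' in the two boundary ranges. That first requires restoring the $\chi\in\mathcal E$ terms you set aside, whose cost you do not estimate.

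The paper avoids all of this by doing the weight surgery on the additive side before expanding into characters. It first proves \eqref{eq:general-logarithmic-identity}, splitting $\log X-w_X(n)$ into four pieces:
\begin{itemize}
\item $\log(X/n)$, of total mass $O(X)$;
\item prime powers with $p\leq Z_B$, including $p\mid q$, handled by Proposition~\ref{prop:basic-bound} at denominator $q/(q,p^k)$; the loss $\sum_{p^k\leq Z_B^2}k(\log p)\sqrt{(q,p^k)}/p^k=L^{o(1)}\log Z_B$ is harmless, so your appeal to \eqref{eq:general-primary-kernel} is unnecessary;
\item primes $p>X/Z_B$, handled by Lemma~\ref{lem:weighted-large-prime-boundary};
\item repeated middle primes, which are negligible.
\end{itemize}
Only the weighted sum is then expanded, via \eqref{eq:general-weighted-primary} and $w_X(dm)=w_X(m)$. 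The characters outside $\mathcal E$ are discarded by Lemma~\ref{lem:general-weighted-tail}. Passing back from $W_{\chi,X}$ to $(\log X)M_\chi$ is needed only for the $L^{o(1)}$ characters in $\mathcal E$. There the trivial bound \eqref{eq:general-remove-weight} costs $(X/\sqrt q)(q/\phi(q))H(q)|\mathcal E|\log Z_B/L=o(X/\sqrt q)$.

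In this order your Step~2 is superfluous. Note also that the lemmas must be applied with a parameter $B\geq A+R$, i.e.\ threshold $Z_B$ rather than $Z_A$, so that they cover every $Y=X/d$ with $d\leq L^R$.
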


\begin{proof}
Choose a sufficiently large fixed $R=R(A)$ and write
$B=A+R+1$. Throughout the proof we use the weight in
\eqref{eq:general-prime-weight} with $A$ replaced by $B$; thus its
threshold is $Z_B=(qL^{B+2})^4$. \rev{We first estimate the boundary terms.} Uniformly for $x/L^A\leq X\leq x$,
\begin{equation}\label{eq:general-logarithmic-identity}
 (\log X)S_f(X,a/q)
 =\sum_{n\leq X}f(n)\e(an/q)w_X(n)
 +O_A(X)+o\left(\frac{XL}{\sqrt q}\right).
\end{equation}

Indeed,
\[
 \log n-w_X(n)
 =\sum_{\substack{p^j\mid n\\p\leq Z_B}}\log p
  +\sum_{\substack{p^j\mid n\\p>X/Z_B}}\log p
  +\sum_{\substack{p^j\mid n,\ j\geq2\\Z_B<p\leq X/Z_B}}\log p.
\]
The final sum has total mass
$O(X\sum_{p>Z_B}(\log p)/p^2)=O(X/Z_B)$. For the first sum, separate the
exact valuation $p^k\Vert n$. If $p^k\leq Z_B^2$, Proposition~\ref{prop:basic-bound}
applied at length $X/p^k$ and reduced denominator $q/(q,p^k)$ gives
\[
 \sum_{\substack{m\leq X/p^k\\(m,p)=1}}
 f(m)\e(ap^km/q)
 \ll_A \frac{X}{p^kL}
       +\frac{X}{p^k\sqrt{q/(q,p^k)}}.
\]
After summing with weight $k\log p$, the first terms contribute
$O_A(X\log Z_B/L)=O_A(X)$, while the second terms are
\[
 \ll \frac{X}{\sqrt q}
 \sum_{p^k\leq Z_B^2}
 \frac{k(\log p)\sqrt{(q,p^k)}}{p^k}
 =\frac{X}{\sqrt q}L^{o(1)}\log Z_B
 =o\left(\frac{XL}{\sqrt q}\right).
\]
The terms with $p^k>Z_B^2$ have total mass $O(X\log Z_B/Z_B)$.

For the second sum, $p>X/Z_B$ implies $p^2>X$ for sufficiently large
$x$. Hence its contribution is
\[
 \sum_{p>X/Z_B}f(p)\log p
 \sum_{m\leq X/p}f(m)\e(apm/q).
\]
Lemma~\ref{lem:weighted-large-prime-boundary}, with $Z=Z_B$,
bounds this by
\[
 O_A(X)+O\left(
  \frac{X2^{\omega(q)/2}}{\sqrt{\phi(q)}}\log(2Z_B)
 \right)
 =O_A(X)+o\left(\frac{XL}{\sqrt q}\right).
\]
Finally, $\sum_{n\leq X}\log(X/n)\ll X$. These estimates prove
\eqref{eq:general-logarithmic-identity}.

For a character $\chi\pmod q$, write
\[
 W_{\chi,X}(Y):=\sum_{n\leq Y}f(n)\chi(n)w_X(n).
\]
Since every prime dividing $q$ is smaller than $Z_B$, one has
$w_X(dm)=w_X(m)$ whenever $d\in\mathcal D_q$ and $(m,q)=1$. Thus
Fourier inversion gives the exact identity
\begin{equation}\label{eq:general-weighted-primary}
 \sum_{n\leq X}f(n)\e(an/q)w_X(n)
 =\frac1{\phi(q)}\sum_{\chi\bmod q}
   \sum_{\substack{d\leq X\\d\in\mathcal D_q}}
   f(d)G_q(ad,\ov\chi)W_{\chi,X}(X/d).
\end{equation}
Restrict first to $d\leq L^R$. Lemma~\ref{lem:gauss-pointwise} and
Lemma~\ref{lem:general-weighted-tail} with parameter $B$, together with
\[
 \sum_{d\in\mathcal D_q}\frac{\sqrt{(d,q)}}d\leq H(q),
\]
show that the characters outside $\mathcal E$ contribute, after division
by $\log X$,
\[
 \ll \frac{X\sqrt q}{\phi(q)}H(q)
       \left(\frac{\log Z_B}{L}\right)^{2/3}
 =o\left(\frac X{\sqrt q}\right).
\]
The terms with $d>L^R$ are $O_A(X/L)$ by
\eqref{eq:general-primary-tail}, since $w_X(n)\leq\log X$.

For $\chi\in\mathcal E$ and $Y=X/d$, direct summation of the missing
prime weights gives
\begin{equation}\label{eq:general-remove-weight}
 |(\log X)M_\chi(Y)-W_{\chi,X}(Y)|\ll_A Y\log Z_B.
\end{equation}
Indeed, write $\log X=\log(X/n)+\log n$ and split the prime-power
expansion of $\log n$ at $Z_B$ and $X/Z_B$. Since
$X/L^R\leq Y\leq X$, the term $\log(X/n)$ has total mass
$O_A(Y\log L)$, and Mertens' theorem bounds each of the two prime ranges
by $O(Y\log Z_B)$; repeated central primes contribute less. Using
\eqref{eq:general-remove-weight} in \eqref{eq:general-weighted-primary}
costs
\[
 \ll \frac X{\sqrt q}\frac q{\phi(q)}H(q)|\mathcal E|
       \frac{\log Z_B}{L}
 =o\left(\frac X{\sqrt q}\right).
\]
\rev{Together,} \eqref{eq:general-logarithmic-identity}--
\eqref{eq:general-remove-weight} \rev{show that only the characters in $\mathcal E$ can contribute significantly to}
\eqref{eq:general-primary-exact}.

Take $\mathcal E_q$ from Lemma~\ref{lem:general-level-q-sieve} with parameter $B$. For
$\chi\in\mathcal E\setminus\mathcal E_q$, Cauchy--Schwarz in the
character variable, Lemma~\ref{lem:gauss-pointwise}, and
\eqref{eq:general-level-q-sieve} give, for $d\leq L^R$,
\[
 \sum_{\chi\in\mathcal E\setminus\mathcal E_q}
 |G_q(ad,\ov\chi)M_\chi(X/d)|
 \ll \sqrt{q(d,q)}\,|\mathcal E|^{1/2}
      \frac Xd L^{-(1-1/\sqrt2)+o(1)}.
\]
After summing over $d$ and dividing by $\phi(q)$, this is
$o(X/\sqrt q)$. The terms with larger $d\in\mathcal D_q$ are covered by
\eqref{eq:general-kernel-tail}. Hence at most the character in
$\mathcal E_q$ remains, proving \eqref{eq:general-one-character}.
\end{proof}

\subsection{Stability under convolution}

\rev{The surviving mean has the same regularity as the mean of a completely multiplicative function.} We formulate the estimate with its natural amplitude;
no pointwise bound for that amplitude is required.

\begin{lem}
\label{lem:general-stability}
Fix $A>0$ and let $\chi\pmod q$. There are a real $|t|\leq L$
and an absolute constant $\eta_1>0$ such that, uniformly for
$1\leq u\leq L^A$ and $|\gamma|\leq1/(qx)$,
\begin{equation}\label{eq:general-stability}
 \sum_{n\leq x/u}f(n)\chi(n)\e(\gamma un)
 =\frac{\mathcal H_x(\gamma,t)}{u^{1+it}}M_\chi(x)
  +O_A\left(\frac{x}{uL^{\eta_1}}\right).
\end{equation}
Moreover,
\begin{equation}\label{eq:general-amplitude-bound}
 |\mathcal H_x(\gamma,t)M_\chi(x)|
 \leq\frac{\phi(q)}q x
   +O_A\left(2^{\omega(q)}+\frac{x}{L^{\eta_1}}\right).
\end{equation}
\end{lem}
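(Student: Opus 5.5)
The plan is to treat $g(n):=f(n)\chi(n)$ as an arbitrary $1$-bounded multiplicative function and to rely on the Lipschitz-type estimate for mean values of such functions due to Granville and Soundararajan \cite{GranvilleSoundararajan2007}. In the form I intend to use, it says: if $t$ with $|t|\leq L$ is chosen to minimise the pretentious distance between $g$ and $n^{it}$ up to $x$, then uniformly for $1\leq w\leq x$,
\[
 \frac{M_\chi(x/w)}{(x/w)^{1+it}}-\frac{M_\chi(x)}{x^{1+it}}
 \ll\left(\frac{\log(2w)}{\log x}\right)^{c}\log\left(\frac{\log x}{\log(2w)}\right)
\]
for some absolute $c>0$ (e.g.\ $1-2/\pi$). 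I am treating this as a citable input; if the normalisation needs adjusting, the error only changes the eventual exponent $\eta_1$. Fix this $t$; it depends only on $f,\chi,x$.

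Specialised to $w\leq L^{A'}$ with $A'$ a fixed multiple of $A$, the estimate gives
\[
 M_\chi(v)=(v/x)^{1+it}M_\chi(x)+E(v),\qquad
 |E(v)|\ll_A vL^{-\eta_1}\quad(x/L^{A'}\leq v\leq x).
\]
Taking $v=x/u$ already proves \eqref{eq:general-stability} when $\gamma=0$.

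For general $\gamma$, I would write the left side of \eqref{eq:general-stability} as a Stieltjes integral $\int_0^{x/u}\e(\gamma uv)\,\d M_\chi(v)$. The range $v\leq x/L^{A'}$ is bounded trivially by $O(x/(uL^{A'-A}))$, which is acceptable. On the remaining range I would insert the decomposition of $M_\chi$:
\begin{itemize}
\item The main part is $M_\chi(x)x^{-1-it}\int_0^{x/u}\e(\gamma uv)\,\d(v^{1+it})$, where the lower piece has been restored at negligible cost. The substitution $w=uv$ turns this exactly into $u^{-1-it}\mathcal H_x(\gamma,t)M_\chi(x)$.
\item The error part is handled by integration by parts. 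It is bounded by $|E(x/u)|$ plus $\int|E(v)|\,|\gamma|u\,\d v$, and together these are $\ll (x/u)L^{-\eta_1}(1+|\gamma|x)\ll (x/u)L^{-\eta_1}$, since $|\gamma|\leq 1/(qx)$.
\end{itemize}
This proves \eqref{eq:general-stability}. Large $|t|$ causes no trouble, because the factor $1+it$ appears only inside $\mathcal H_x$, never in the error terms.

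For the amplitude bound \eqref{eq:general-amplitude-bound}, I would apply \eqref{eq:general-stability} with $u=1$. This gives $\mathcal H_x(\gamma,t)M_\chi(x)=\sum_{n\leq x}f(n)\chi(n)\e(\gamma n)+O_A(xL^{-\eta_1})$. The sum on the right is supported on $(n,q)=1$ and is therefore bounded trivially by $\#\{n\leq x:(n,q)=1\}$. Möbius inversion over $d\mid q$ shows that this count equals $\phi(q)x/q+O(2^{\omega(q)})$, which gives \eqref{eq:general-amplitude-bound} with no pointwise information about $\mathcal H_x$ needed.

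I expect the main obstacle to be the Lipschitz input itself. It must hold uniformly for all $1$-bounded multiplicative $g$, including functions with arbitrary values at prime powers dividing $q$ and non-completely-multiplicative behaviour elsewhere, and with a single $t$ serving every $v\in[x/L^{A'},x]$. If the cited theorem is stated only for completely multiplicative functions, my fallback is the convolution \eqref{eq:general-powerful-convolution}: apply the estimate to $f^\flat\chi$ and transfer it through $h$, whose power-saving tail $\sum|h(k)|k^{-1+\rho}\ll1$ makes the transfer cost only $O(xL^{-\eta_1})$. The factor $k^{-it}$ produced by this transfer is absorbed into a new amplitude $\sum_k h(k)\chi(k)k^{-1-it}$, which one can verify is harmless after the same $u=1$ normalisation.
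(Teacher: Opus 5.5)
Your proposal is correct and follows essentially the same route as the paper. Both arguments use a Lipschitz estimate for the mean of $f\chi$ at polylogarithmic shortenings, with a single frequency $|t|\leq L$. Both then use partial summation in $\e(\gamma un)$, where integrating by parts keeps $|1+it|$ out of the error terms and $|\gamma|x\leq1$ controls the integrated error. Both finish with the trivial count of $n\leq x$ coprime to $q$ at $u=1$.

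The only difference is in how the Lipschitz input is obtained. The paper applies \cite[Theorem~1.5]{GranvilleHarperSoundararajan2019} to the completely multiplicative $f^\flat\chi\in\mathcal C(1)$, transfers the result through the powerful convolution $h$, and then identifies the amplitude by taking $u=1$. This is exactly your fallback, and it is the safer of your two routes, since a general $f\chi$ need not lie in $\mathcal C(1)$.
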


\begin{proof}
Recall the completely multiplicative $f^\flat$ and the powerful
convolution $f\mathbf1_{(\cdot,q)=1}=h*f^\flat$ from
\eqref{eq:general-powerful-convolution}.
Apply the Lipschitz theorem
\cite[Theorem~1.5]{GranvilleHarperSoundararajan2019} to
$g(n)=f^\flat(n)\chi(n)\in\mathcal C(1)$.
Let $t$ be its maximizing frequency, with $|t|\leq L$.
For each fixed $B>0$, the theorem gives
\begin{equation}\label{eq:general-relative-complete}
 \sum_{n\leq x/v}g(n)
 =v^{-1-it}\sum_{n\leq x}g(n)
  +O_B\left(\frac{x}{vL^{\eta_2}}\right)
 \qquad(1\leq v\leq L^B)
\end{equation}
for some absolute $\eta_2>0$.
Indeed, the displayed Lipschitz error is
$L^{-(1-2/\pi)+o(1)}$ for polylogarithmic shortening $v$.

Choose $R$ sufficiently large in terms of $A$ and use
\eqref{eq:general-relative-complete} with $v=uk$ for
$k\leq L^R$. Since
$\sum_k|h(k)|/k^{1-\rho}\ll_\rho1$ for $0<\rho<1/2$,
convolution gives
\[
 M_\chi(x/u)=
 u^{-1-it}M_g(x)\sum_{k\geq1}
                 \frac{h(k)\chi(k)}{k^{1+it}}
 +O_A\left(\frac{x}{uL^{\eta_1}}\right),
\]
with $\eta_1>0$ after decreasing $\eta_2$.
The tails $k>L^R$ follow from Rankin's trick and the trivial
mean-value bound. Taking $u=1$ in this formula identifies its
main amplitude as $M_\chi(x)$ up to $O(xL^{-\eta_1})$. Hence
\begin{equation}\label{eq:general-relative-mean}
 M_\chi(x/u)=u^{-1-it}M_\chi(x)
     +O_A\left(\frac{x}{uL^{\eta_1}}\right).
\end{equation}

Write the left-hand side of \eqref{eq:general-stability}
by partial summation in the phase $\e(\gamma un)$, and use
\eqref{eq:general-relative-mean} at every prefix
$x/L^{B}\leq y\leq x/u$, with $B$ sufficiently large in terms of $A$.
The omitted smaller prefixes cost $O(x/L^B)$, while
$|\gamma|x\leq1$ keeps the integrated error
$O_A(x/(uL^{\eta_1}))$. The main term is
\[
 \frac{M_\chi(x)}{u^{1+it}}
 \left(\e(\gamma x)-2\pi i\gamma x
  \int_0^1 \e(\gamma xv)v^{1+it}\,\d v\right)
 =\frac{\mathcal H_x(\gamma,t)}{u^{1+it}}M_\chi(x),
\]
by integration by parts. This proves \eqref{eq:general-stability}.
\rev{Finally, take} $u=1$. The left side has modulus at most
$\#\{n\leq x:(n,q)=1\}=x\phi(q)/q+O(2^{\omega(q)})$,
which gives \eqref{eq:general-amplitude-bound}.
\end{proof}

\subsection{Completion of the main theorem}

\begin{proof}[Proof of Theorem~\ref{thm:main}]
Put $L=\log x$ and first suppose $q\leq L^3$. Take $A$ sufficiently
large in Proposition~\ref{prop:general-one-character}. If there is no
exceptional character, the proposition and \rev{partial summation in the phase perturbation $\alpha-a/q$} give
$S_f(x,\alpha)=O(x/L)+o(x/\sqrt q)$.

Otherwise let $\chi_0$ be the unique surviving character, and choose
$t$ from Lemma~\ref{lem:general-stability}.
For $x/L^A\leq X\leq x$ apply the stability lemma to the terms
$d\leq L^R$ with $u=xd/X$, enlarging $A$ in terms of the fixed $R$.
Proposition~\ref{prop:general-one-character} and
\eqref{eq:general-kernel-tail} yield uniformly
\begin{equation}\label{eq:general-prefix-model}
 S_f(X,a/q)
 =\frac{M_{\chi_0}(x)}{\phi(q)}
   \left(\frac Xx\right)^{1+it}
   \sum_{d\in\mathcal D_q}
    \frac{f(d)G_q(ad,\ov{\chi_0})}{d^{1+it}}
 +O_A\left(\frac XL\right)+o\left(\frac X{\sqrt q}\right).
\end{equation}
Writing $\alpha=a/q+\beta$, where $|\beta|x\leq1/q$,
partial summation in $\e(\beta n)$ gives
\begin{equation}\label{eq:general-main-expansion}
\begin{aligned}
 S_f(x,\alpha)
 ={}&\frac{\mathcal H_x(\beta,t)M_{\chi_0}(x)}{\phi(q)}
 \sum_{d\in\mathcal D_q}
 \frac{f(d)G_q(ad,\ov{\chi_0})}{d^{1+it}}\\
 &\quad+O\left(\frac xL\right)+o\left(\frac x{\sqrt q}\right).
\end{aligned}
\end{equation}
The omitted prefixes below $x/L^A$ contribute $O(x/L^A)$.
The uniform errors in \eqref{eq:general-prefix-model} remain of the
same order after partial summation.

Lemma~\ref{lem:general-stability} gives
\[
 |\mathcal H_x(\beta,t)M_{\chi_0}(x)|
 \leq\frac{\phi(q)}q x+O\bigl(2^{\omega(q)}+xL^{-\eta_1}\bigr).
\]
The error on the right is $o(x\phi(q)/q)$ for $q\leq L^3$.
For general multiplicative $f$, the first estimate in
Lemma~\ref{lem:general-primary-kernel} bounds the kernel in
\eqref{eq:general-main-expansion} by $(4/\sqrt6)\sqrt q$.
This proves \eqref{eq:main-expanded} in the small-modulus range.
If $f$ is completely multiplicative, we instead apply
\eqref{eq:complete-primary-kernel}, obtaining
\eqref{eq:complete-main} from the \emph{same} expansion.

For $L^3<q\leq Q_{\rm MV}(x)$ the Montgomery--Vaughan estimate
\eqref{eq:MV-intro}, together with $q/\phi(q)\ll\log\log(3q)$,
gives $S_f(x,a/q)\ll x/L$ for either class of functions.
Lemma~\ref{lem:phase} contributes $O(x/q)$, so the asserted estimates
hold throughout the claimed range.

Finally, the periodic example of
Proposition~\ref{prop:counterexamples}\emph{(i)} attains $4/\sqrt6$,
while primitive characters attain $1$ in the completely multiplicative
class. \rev{The error terms are uniform in the stated parameters.} Since $q\leq x/(\log x)^{2+\varepsilon}$ implies
$q\leq Q_{\rm MV}(x)$ for every fixed $\varepsilon>0$ and large $x$,
Conjecture~\ref{conj:main} follows.
\end{proof}

\section{Partial summation and logarithmic weights}
\label{sec:logarithmic}

\begin{proof}[Proof of Corollary~\ref{cor:logarithmic}]
Write $L=\log x$. First assume $q\leq Q_{\rm MV}(x)$, and set
\[
 U=\max\{x_0,qL^2(\log\log x)^3\},
\]
where $x_0$ is an absolute threshold for Proposition~\ref{prop:basic-bound}.
For $U\leq t\leq x$, one has $q\leq Q_{\rm MV}(t)$ and
$|\alpha-a/q|\leq1/(qt)$, so Proposition~\ref{prop:basic-bound} gives
\[
 |S_f(t,\alpha)|\ll\frac{t}{\log t}+\frac{t}{\sqrt q}.
\]
Using $|S_f(t,\alpha)|\leq t$ below $U$, partial summation yields
\[
 \left|\sum_{n\leq x}\frac{f(n)}n\e(n\alpha)\right|
 \ll \log U+\log\log x+\frac{L}{\sqrt q}
 \ll \log q+\frac{L}{\sqrt q}.
\]
\rev{For the last inequality, note that} $\log U\leq\log q+O(\log\log x)$, and
$\log\log x\ll\log q+L/\sqrt q$: use $L/\sqrt q\geq\log\log x$ if
$q\leq(L/\log\log x)^2$, and $\log q\gg\log\log x$ otherwise. If
$Q_{\rm MV}(x)<q\leq x$, then $\log q\asymp L$, and the trivial
harmonic bound proves the same assertion. This establishes
\eqref{eq:logarithmic-corollary}.

For the sharp estimates, put $C_0=4/\sqrt6$ in the general case and
$C_0=1$ in the completely multiplicative case. Under the subcritical
hypothesis, choose $T$ by
\[
 \log T=\left(\frac{L\log\log x}{\sqrt q}\right)^{1/2}.
\]
Then $q\leq Q_{\rm MV}(T)$ and
\[
 \log T=o(L/\sqrt q),\qquad
 \log\log x=o(L/\sqrt q).
\]
Theorem~\ref{thm:main} therefore implies,
uniformly for $T\leq t\leq x$,
\[
 |S_f(t,\alpha)|\leq(C_0+o(1))\frac{t}{\sqrt q}
          +O\left(\frac{t}{\log t}\right).
\]
The initial segment $n\leq T$ contributes $O(\log T)$; partial
summation on $(T,x]$ contributes at most
\[
 (C_0+o(1))\frac{L}{\sqrt q}+O(\log\log x).
\]
This proves \eqref{eq:general-sharp-log} and
\eqref{eq:complete-sharp-log}. For optimality, the periodic construction
in Proposition~\ref{prop:counterexamples}\emph{(i)} gives the
coefficient $4/\sqrt6$ after partial summation; primitive Dirichlet
characters give the coefficient $1$ in the completely multiplicative
case. \rev{A direct treatment of the logarithmic sums and further discussion appear in}
\cite{GouLogarithmic2026}.
\end{proof}

\section{Counterexamples to three sharper conjectures}
\label{sec:counterexamples}

Granville and Lamzouri use $A\lesssim B$ to mean
$|A|\leq(1+o(1))B$.  We show that their conjectures~(1.3), (1.4),
and~(1.6), interpreted uniformly over all $1$-bounded multiplicative
functions, are false. The three subsections prove the corresponding parts
of Proposition~\ref{prop:counterexamples}. \rev{The three obstructions are a local Gauss-sum factor, a growing Euler product over smooth numbers, and the $\log q$ term at the logarithmic endpoint, respectively.}

\subsection{The local factor \texorpdfstring{$4/\sqrt6$}{4/sqrt(6)}:
failure of conjecture~(1.3)}

\begin{proof}[Proof of Proposition~\ref{prop:counterexamples}\emph{(i)}]
Let $\ell\geq5$ be \rev{a prime}, let $\chi$ be a primitive character modulo
$\ell$, and put $q=6\ell$. Define a multiplicative function $g$ by
\[
 g(2^k)=g(3^k)=-1\quad(k\geq1),
 \qquad g(p^k)=1\quad(p>3,\ k\geq1),
\]
and set $f(n):=g(n)\chi(n)$.  The function $g$ is periodic modulo $6$,
with
\[
 (g(0),g(1),\ldots,g(5))=(1,1,-1,-1,-1,1),
\]
so $f$ is periodic modulo $q$.  Notice that $f$ is multiplicative but
not completely multiplicative.

Write
$\tau(\chi):=\sum_{v\bmod\ell}\chi(v)\e(v/\ell)$ for the primitive
Gauss sum; then $|\tau(\chi)|=\sqrt\ell$.

Let
\[
 C_q:=\sum_{b\bmod q}f(b)\e(b/q).
\]
Choose $s$ and $t$ with $s\ell\equiv1\pmod6$ and
$6t\equiv1\pmod\ell$.  The Chinese remainder theorem gives
\[
 C_q=
 \left(\sum_{u\bmod6}g(u)\e(su/6)\right)
 \left(\sum_{v\bmod\ell}\chi(v)\e(tv/\ell)\right).
\]
Since $s\equiv\pm1\pmod6$, the first factor equals $4$; the second
equals $\chi(6)\tau(\chi)$. Thus
\begin{equation}\label{eq:counter-Cq}
 C_q=4\chi(6)\tau(\chi),\qquad |C_q|=4\sqrt\ell.
\end{equation}
Periodicity now yields
\[
 \sum_{n\leq x}f(n)\e(n/q)
 =\frac{x}{q}C_q+O(q).
\]
Consequently, whenever $q^{3/2}=o(x)$,
\begin{equation}\label{eq:counter-13}
 \left|\sum_{n\leq x}f(n)\e(n/q)\right|
 =\left(\frac4{\sqrt6}+o(1)\right)\frac{x}{\sqrt q}.
\end{equation}
For example, take primes $\ell_j\to\infty$, put $q_j=6\ell_j$, and
choose $x_j=\exp(q_j)$.  Then $q_j\leq(\log x_j)^2$, and
\eqref{eq:counter-13} contradicts the leading constant $1$ in
\cite[(1.3)]{GranvilleLamzouri2026}.  This does not contradict an
order-of-magnitude bound $\ll x/\sqrt q$.
\end{proof}

\subsection{A smooth-number Euler product: failure of conjecture~(1.4)}

\begin{proof}[Proof of Proposition~\ref{prop:counterexamples}\emph{(ii)}]
Let $q$ be squarefree, assume $P^+(q)\leq y$, and define
\[
 f_q(n):=\mu((n,q)).
\]
This is a periodic, unimodular multiplicative function. For
$a\in(\Z/q\Z)^\times$, write
\[
 S_q(a;x,y):=
 \sum_{\substack{n\leq x\\P^+(n)\leq y}}
 f_q(n)\e(an/q).
\]
Averaging over the reduced numerators gives the exact identity
\begin{equation}\label{eq:counter-average}
 \frac1{\phi(q)}\sum_{a\bmod q}^{*}S_q(a;x,y)
 =\mu(q)M_q(x,y),
\end{equation}
where
\begin{equation}\label{eq:counter-Mq}
 M_q(x,y):=\frac1{\phi(q)}
 \sum_{r\mid q}
 \left(\prod_{p\mid r}(p-2)\right)\Psi(x/r,y)>0.
\end{equation}
Indeed, if $d=(n,q)$, then the squarefreeness of $q$ gives
\[
 c_q(n)=\mu(q/d)\phi(d),
 \qquad \mu(d)c_q(n)=\mu(q)\phi(d).
\]
Also
\[
 \phi((n,q))
 =\prod_{p\mid q}\bigl(1+(p-2)\mathbf1_{p\mid n}\bigr).
\]
Expanding the last product and summing over the $y$-smooth integers
gives \eqref{eq:counter-Mq}.  In particular,
\begin{equation}\label{eq:counter-max-a}
 \max_{(a,q)=1}|S_q(a;x,y)|\geq M_q(x,y).
\end{equation}
\rev{Averaging in} \eqref{eq:counter-average} \rev{is essential: the corresponding identity need not hold for a fixed numerator $a$.}

Choose $x=x(y)$ by
\begin{equation}\label{eq:counter-saddle-x}
 \log x=\sum_{p\leq y}\frac{\log p}{\sqrt p-1}.
\end{equation}
Thus the Hildebrand--Tenenbaum saddle point is $1/2$. Their saddle-point
theorem \cite[Theorems~1 and~3]{HildebrandTenenbaum1986} and its
fixed-dilation consequence give, for each fixed $q$ and each $r\mid q$,
\begin{equation}\label{eq:counter-local-ratio}
 \frac{\Psi(x/r,y)}{\Psi(x,y)}
 =r^{-1/2}(1+o_q(1))
 \qquad(y\to\infty).
\end{equation}
For completeness, let $\sigma_r$ be the saddle point associated with
$(x/r,y)$. The mean-value theorem in the saddle equation gives
\[
 \sigma_r=\frac12+O_q\!\left(
  \left\{\sum_{p\leq y}
  \frac{p^{1/2}(\log p)^2}{(p^{1/2}-1)^2}\right\}^{-1}
 \right)=\frac12+o_q(1).
\]
A Taylor expansion of the two saddle-point formulae, first in the
$\log x$ variable and then in the saddle, therefore gives
$\log\Psi(x/r,y)-\log\Psi(x,y)=-(\log r)/2+o_q(1)$, which is
\eqref{eq:counter-local-ratio}. This is the framework recalled in
\cite[\S3.1]{GranvilleLamzouri2026}; crucially, $q$ is fixed before
$y$ tends to infinity. It follows that
\begin{align}
 M_q(x,y)
 &=(1+o_q(1))\frac{\Psi(x,y)}{\phi(q)}
   \prod_{p\mid q}\left(1+\frac{p-2}{\sqrt p}\right) \notag\\
 &=(1+o_q(1))\frac{\Psi(x,y)}{\sqrt q}\mathfrak A(q),
 \label{eq:counter-Aq-main}
\end{align}
where
\begin{equation}\label{eq:counter-Aq}
 \mathfrak A(q):=
 \prod_{p\mid q}\left(1+\frac1{\sqrt p+1}\right).
\end{equation}
The second equality uses
\[
 \frac{\sqrt p}{p-1}\left(1+\frac{p-2}{\sqrt p}\right)
 =1+\frac1{\sqrt p+1}.
\]

The same saddle-point formula, combined with the prime number theorem,
gives $\log x=(2+o(1))\sqrt y$ and
\[
 \log\left(\frac{\Psi(x,y)\log x}{\sqrt{xy}}\right)
 =\left(2+o(1)\right)\frac{\sqrt y}{\log y}.
\]
Consequently, for every fixed $q$,
\begin{equation}\label{eq:counter-secondary-small}
 \frac{\sqrt{xy}}{\log x}
 =o\left(\frac{\Psi(x,y)}{\sqrt q}\right).
\end{equation}

Finally take the primorials $q_z:=\prod_{p\leq z}p$. The prime number
theorem gives
\begin{align*}
 \log\mathfrak A(q_z)
 &=\sum_{p\leq z}\log\left(1+\frac1{\sqrt p+1}\right)\\
 &=\sum_{p\leq z}\left(\frac1{\sqrt p}+O\left(\frac1p\right)\right)
 =\left(2+o(1)\right)\frac{\sqrt z}{\log z}\longrightarrow\infty.
\end{align*}
Choose $z_j\to\infty$ and set $q_j=q_{z_j}$.  With $q_j$ fixed,
choose $y_j\geq q_j^2$ sufficiently large, define $x_j$ by
\eqref{eq:counter-saddle-x}, and then choose a reduced numerator $a_j$
using \eqref{eq:counter-max-a}.  Equations
\eqref{eq:counter-Aq-main} and \eqref{eq:counter-secondary-small} give
\[
 \frac{|S_{q_j}(a_j;x_j,y_j)|}
 {\Psi(x_j,y_j)/\sqrt{q_j}+\sqrt{x_jy_j}/\log x_j}
 \longrightarrow\infty.
\]
This disproves \cite[(1.4)]{GranvilleLamzouri2026}.  \rev{The order of these choices matters: we use the smooth-number asymptotic only with $q$ fixed, without assuming uniformity as $q$ grows.}  Since $f_q(p^2)=f_q(p)=-1$ for
$p\mid q$, this example is not completely multiplicative and does not
settle the completely multiplicative version of that problem.
\end{proof}

\subsection{The endpoint \texorpdfstring{$\log q$}{log q} obstruction:
failure of conjecture~(1.6)}

\begin{proof}[Proof of Proposition~\ref{prop:counterexamples}\emph{(iii)}]
Set $q=\lfloor(\log x/\log\log x)^2\rfloor$. For $f\equiv1$ and $\alpha=1/q$,
summation by parts gives
\[
 \sum_{n\leq x}\frac{\e(n/q)}n
 =-\operatorname{Log}(1-\e(1/q))+O(q/x),
\]
with the principal branch of the logarithm. Hence
\[
 \left|\sum_{n\leq x}\frac{\e(n/q)}n\right|
 =\log q+O(1)=(2+o(1))\log\log x,
\]
whereas $(4/\sqrt6)(\log x)/\sqrt q=(4/\sqrt6+o(1))\log\log x$.
This disproves conjecture~(1.6) at its stated endpoint.
More generally, taking $q\sim c(\log x/\log\log x)^2$ gives the lower leading
coefficient $2\sqrt c$, so the proposed coefficient $4/\sqrt6$ fails
whenever $c>2/3$. See also \cite{GouLogarithmic2026}.
\end{proof}

\section{Why complete multiplicativity improves the constant}

\rev{The sharp constant $1$ in Theorem~\ref{thm:main} relies on complete multiplicativity.} The simplest obstruction for general multiplicative
functions already occurs at $q=2$. Define $f(2^k)=-1$ for $k\geq1$ and
$f(p^k)=1$ for odd primes $p$. Then $f(n)\e(n/2)=-1$ for every $n$, so
$S_f(x,1/2)=-\lfloor x\rfloor$.

Lemma~\ref{lem:general-primary-kernel} identifies the general mechanism.
At the prime-power modulus $p^A$, a locally principal component contributes
$2/p^{A/2}$ relative to the square-root scale. Only the moduli $2$ and
$3$ enlarge the norm, and their factors multiply to $4/\sqrt6$. Complete multiplicativity couples the values at $p,p^2,\ldots$
and replaces these local factors by the cancellation in
\eqref{eq:principal-kernel-contraction}.

\section{A critical-range counterexample}
\label{app:critical-counterexample}

\rev{We give a counterexample showing that the sharp bound in Corollary~\ref{cor:sharp-range} fails at any fixed positive proportion of the critical scale.} The construction combines the
local extremal mechanism responsible for $4/\sqrt6$ with the freedom to
prescribe the values of a multiplicative function at primes exceeding
$x/2$.

\begin{prop}[Counterexample in the critical range]
\label{prop:critical-counterexample}
Let $0<c\leq1$ be fixed. There exist arbitrarily large $x$, integers
$q$ satisfying
\[
 q=c(\log x)^2,
\]
and $1$-bounded multiplicative functions $f$ such that, with
$\alpha=1/q$,
\begin{equation}\label{eq:critical-counterexample}
 \left|
 \sum_{n\leq x}f(n)\e(n/q)
 \right|
 \geq
 \left(
 \frac4{\sqrt6}+\frac{\sqrt c}{2}+o(1)
 \right)\frac{x}{\sqrt q}.
\end{equation}
In particular, the estimate
\[
 \left|
 \sum_{n\leq x}f(n)\e(n\alpha)
 \right|
 \leq
 \left(\frac4{\sqrt6}+o(1)\right)\frac{x}{\sqrt q}
\]
cannot hold uniformly for all $1$-bounded multiplicative $f$ throughout
the range $q\leq(\log x)^2$.
\end{prop}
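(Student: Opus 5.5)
The plan is to superpose the two extremal mechanisms from the introduction: the periodic local example of Proposition~\ref{prop:counterexamples}\emph{(i)}, which produces $(4/\sqrt6)\,x/\sqrt q$, and free choices of $f(p)$ at primes $p\in(x/2,x]$, which produce $\sim x/(2\log x)=(\sqrt c/2)\,x/\sqrt q$ when $q=c(\log x)^2$. Once the two contributions are rotated to the same argument, their moduli add.

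\emph{Parameters.} Take primes $\ell_j\to\infty$ and put $q_j=6\ell_j$. Define $x_j$ by $\log x_j=\sqrt{q_j/c}$, so that $q_j=c(\log x_j)^2$ exactly. Since $c\leq1$, this gives $q_j\leq(\log x_j)^2$, and also $q^{3/2}=o(x)$.

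\emph{Base function.} Let $f_0=g\chi$ be the function of Proposition~\ref{prop:counterexamples}\emph{(i)}, with $\chi$ primitive modulo $\ell$. By \eqref{eq:counter-Cq} and periodicity,
\[
 S_{f_0}(x,1/q)=\frac{x}{q}C_q+O(q),\qquad |C_q|=4\sqrt\ell .
\]
Put $\omega:=C_q/|C_q|$.

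\emph{Modified function.} Define $f$ to agree with $f_0$ except at primes $x/2<p\leq x$, where we set $f(p^k):=\omega\,\ov{\e(p/q)}$ for all $k\geq1$. This is still a $1$-bounded multiplicative function. Because $p>x/2$, the only multiple of such a prime up to $x$ is $p$ itself, so the two exponential sums differ only in the terms $n=p$:
\[
 S_f(x,1/q)=\frac{x}{q}C_q+\omega\bigl(\pi(x)-\pi(x/2)\bigr)
 -\sum_{x/2<p\leq x}\chi(p)\e(p/q)+O(q).
\]

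\emph{Main terms.} The first two terms both have argument equal to that of $\omega$, so their moduli add:
\[
 4\sqrt\ell\,\frac{x}{q}+\frac{x}{2\log x}(1+o(1))
 =\Bigl(\frac4{\sqrt6}+\frac{\sqrt c}{2}+o(1)\Bigr)\frac{x}{\sqrt q}.
\]
Here we use $\sqrt\ell/q=1/(\sqrt6\sqrt q)$ and $1/\log x=\sqrt c/\sqrt q$.

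\emph{Subtracted prime sum (the only step needing care).} The summand $\chi(p)\e(p/q)$ depends only on $p\bmod q$. Since $q\leq(\log x)^2$, the Siegel--Walfisz theorem gives uniform equidistribution of the primes in $(x/2,x]$ over the reduced classes modulo $q$. Hence this sum equals
\[
 \frac{\pi(x)-\pi(x/2)}{\phi(q)}\sum_{(b,q)=1}\chi(b)\e(b/q)
 +O\bigl(x(\log x)^{-A}\bigr).
\]
The complete sum factors by the Chinese remainder theorem into a Ramanujan-sum factor modulo $6$, of modulus $\leq2$, times a Gauss sum of modulus $\sqrt\ell$. The subtracted term is therefore
\[
 O\!\Bigl(\frac{x\sqrt\ell}{\phi(q)\log x}\Bigr)=O\!\Bigl(\frac{x}{\sqrt\ell\,\log x}\Bigr)=o\!\Bigl(\frac{x}{\sqrt q}\Bigr),
\]
because $\ell\to\infty$. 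Even the trivial bound $|\sum_{(b,q)=1}\chi(b)\e(b/q)|\leq\sqrt{q}$ from Lemma~\ref{lem:gauss-pointwise} would suffice here. The remaining error $O(q)$ is also $o(x/\sqrt q)$.

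Combining these estimates gives \eqref{eq:critical-counterexample} along the sequence $(x_j,q_j)$. Since $\sqrt c/2>0$, the displayed uniform bound with constant $4/\sqrt6$ fails throughout $q\leq(\log x)^2$.
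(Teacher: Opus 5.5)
Your proof is correct and follows essentially the same route as the paper: the periodic example $g\chi$ modulo $q=6\ell$ with $x=\exp(\sqrt{q/c})$, rotation of the values $f(p)$ for $x/2<p\leq x$, and Siegel--Walfisz together with the CRT factorization of the twisted Gauss sum to show that the original prime values contribute $o(x/\sqrt q)$. The only difference is cosmetic. You align the new prime values with the phase of $C_q$ and carry the removed values $\chi(p)\e(p/q)$ as an error term, whereas the paper first shows they are negligible and then aligns exactly with the phase of the remaining sum $S_*$.
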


\begin{proof}
Let $\ell>3$ tend to infinity through the primes, and take $q=6\ell$ and
\[
 x=\exp\left(\sqrt{\frac q c}\right).
\]
Thus $q=c(\log x)^2$. Let $\chi$ be a non-principal character modulo
$\ell$; since $\ell$ is prime, $\chi$ is primitive.

Define the multiplicative function $g$ on prime powers by
\[
 g(2^k)=g(3^k)=-1\quad(k\geq1),
 \qquad
 g(p^k)=1\quad(p>3,\ k\geq1).
\]
Then $g$ is periodic modulo $6$, with
\[
 (g(0),g(1),\ldots,g(5))=(1,1,-1,-1,-1,1).
\]
The multiplicative function
\[
 f_0(n)=g(n)\chi(n)
\]
is therefore periodic modulo $q$.

Let $\overline{\ell}$ denote the inverse of $\ell$ modulo $6$, and
$\overline6$ the inverse of $6$ modulo $\ell$. By the Chinese
remainder theorem,
\begin{align*}
 \sum_{b\bmod q}f_0(b)\e(b/q)
 &=
 \left(
 \sum_{u\bmod6}g(u)\e(u\overline{\ell}/6)
 \right)
 \left(
 \sum_{v\bmod\ell}\chi(v)\e(v\overline6/\ell)
 \right).
\end{align*}
Since $\overline{\ell}\equiv\pm1\pmod6$, the first factor is
\[
 1+\e(1/6)-\e(1/3)-\e(1/2)-\e(2/3)+\e(5/6)=4,
\]
up to complex conjugation. The second factor is a Gauss sum of absolute
value $\sqrt\ell$. Hence
\[
 \left|
 \sum_{b\bmod q}f_0(b)\e(b/q)
 \right|
 =4\sqrt\ell.
\]
Using periodicity,
\[
 \sum_{n\leq x}f_0(n)\e(n/q)
 =
 \frac{x}{q}
 \sum_{b\bmod q}f_0(b)\e(b/q)+O(q),
\]
and consequently
\begin{equation}\label{eq:f0-major}
 \left|
 \sum_{n\leq x}f_0(n)\e(n/q)
 \right|
 =
 \left(\frac4{\sqrt6}+o(1)\right)\frac{x}{\sqrt q},
\end{equation}
since $q^{3/2}=o(x)$.

\rev{We next show that primes in $(x/2,x]$ make a negligible contribution to}
\eqref{eq:f0-major}. For such primes $p$ one has $g(p)=1$, and hence
\[
 \sum_{x/2<p\leq x}f_0(p)\e(p/q)
 =
 \sum_{x/2<p\leq x}\chi(p)\e(p/q).
\]
Since $q\ll(\log x)^2$, the Siegel--Walfisz theorem, applied after
splitting the primes into residue classes modulo $q$, gives
\begin{align*}
 \sum_{x/2<p\leq x}\chi(p)\e(p/q)
 &=
 \frac{Li(x)-Li(x/2)}{\phi(q)}
 \sum_{\substack{b\bmod q\\(b,q)=1}}
 \chi(b)\e(b/q)
 +O_A\left(\frac{x}{(\log x)^A}\right)
\end{align*}
for every fixed $A>0$.

The finite sum again factors by the Chinese remainder theorem:
\begin{align*}
 \sum_{\substack{b\bmod q\\(b,q)=1}}
 \chi(b)\e(b/q)
 &=
 \left(
 \sum_{u\in(\Z/6\Z)^\times}
 \e(u\overline{\ell}/6)
 \right)
 \left(
 \sum_{v\bmod\ell}\chi(v)\e(v\overline6/\ell)
 \right).
\end{align*}
The first factor has absolute value $1$, while the second has absolute
value $\sqrt\ell$. Since $\phi(q)=2(\ell-1)$, it follows that
\begin{equation}\label{eq:large-primes-f0}
 \sum_{x/2<p\leq x}f_0(p)\e(p/q)
 =
 o\left(\frac{x}{\sqrt q}\right).
\end{equation}

Consequently, if
\[
 S_*=
 \sum_{\substack{n\leq x\\
 n\notin\{p:x/2<p\leq x\}}}
 f_0(n)\e(n/q),
\]
then \eqref{eq:f0-major} and \eqref{eq:large-primes-f0} imply
\begin{equation}\label{eq:Sstar}
 |S_*|
 =
 \left(\frac4{\sqrt6}+o(1)\right)\frac{x}{\sqrt q}.
\end{equation}
For sufficiently large $x$ this is non-zero; write
$S_*=\omega|S_*|$ with $|\omega|=1$.

We now modify $f_0$ only at primes $p\in(x/2,x]$. Define a
multiplicative function $f$ on prime powers by
\[
 f(p)=\omega\e(-p/q),
 \qquad
 f(p^k)=f(p)^k
 \quad
 \left(\frac x2<p\leq x,\ k\geq2\right),
\]
and retain
\[
 f(p^k)=f_0(p^k)
\]
at every other prime power. This defines a $1$-bounded multiplicative
function.

If $p>x/2$, then $2p>x$ and $p^2>x$. Thus, among the integers
$n\leq x$, changing the value at such a prime affects only the term
$n=p$. Therefore
\begin{align*}
 \sum_{n\leq x}f(n)\e(n/q)
 &=
 S_*+
 \sum_{x/2<p\leq x}f(p)\e(p/q)\\
 &=
 \omega\left(
 |S_*|+\pi(x)-\pi(x/2)
 \right).
\end{align*}
By the prime number theorem,
\[
 \pi(x)-\pi(x/2)
 =
 \left(\frac12+o(1)\right)\frac{x}{\log x}
 =
 \left(\frac{\sqrt c}{2}+o(1)\right)\frac{x}{\sqrt q}.
\]
Combining this with \eqref{eq:Sstar} proves
\eqref{eq:critical-counterexample}.
\end{proof}

\begin{rem}
\label{rem:critical-counterexample}
The construction exhibits two independent sources of large values.
The first comes from the Gauss sums at the primes $2$ and
$3$, which contribute
\[
 \frac4{\sqrt6}\frac{x}{\sqrt q}.
\]
The second comes from the unrestricted values of a multiplicative
function at primes $p>x/2$, and contributes
\[
 \left(\frac12+o(1)\right)\frac{x}{\log x}.
\]
At the critical scale $q\sim c(\log x)^2$ these two quantities have the
same order of magnitude and can be arranged to have the same phase.
Thus any sharp critical-scale estimate of the form
\[
 \left|
 \sum_{n\leq x}f(n)\e(n\alpha)
 \right|
 \leq
 \bigl(C(c)+o(1)\bigr)\frac{x}{\sqrt q}
\]
must satisfy
\[
 C(c)\geq\frac4{\sqrt6}+\frac{\sqrt c}{2}.
\]
In particular, at the endpoint $q\sim(\log x)^2$ one necessarily has
\[
 C(1)\geq\frac4{\sqrt6}+\frac12.
\]
The large-prime contribution is $o(x/\sqrt q)$ when
$q=o((\log x)^2)$, in agreement with
Corollary~\ref{cor:sharp-range}.

\rev{A similar construction shows that the range in Corollary~\ref{cor:sharp-range} is also optimal for completely multiplicative functions.} Take $q$ prime and use a
primitive character modulo $q$ as the initial completely multiplicative
function. Modifying its values at primes in $(x/2,x]$ and then extending
completely multiplicatively gives
\[
 \left|\sum_{n\leq x}f(n)\e(n/q)\right|
 \geq\left(1+\frac{\sqrt c}{2}+o(1)\right)\frac{x}{\sqrt q}
\]
when $q\sim c(\log x)^2$.
\end{rem}

\end{document}